\numberwithin{equation}{theorem}
\theoremstyle{plain}
\newtheorem{maintheorem}{Main Theorem}
\renewcommand{\O}{\mathcal O}
\DeclareMathOperator{\Ass}{Ass}
\newcommand{\fm}{\mathfrak{m}}
\newcommand{\fc}{\mathfrak{c}}
\newcommand{\fb}{\mathfrak{b}}
\newcommand{\CC}{\mathbb{C}}
\newcommand{\NN}{\mathbb{N}}
\newcommand{\fa}{\mathfrak{a}}
\newcommand{\OO}{\mathcal{O}}
\DeclareMathOperator{\sep}{sep}
\DeclareMathOperator{\JJ}{\mathcal{J}}
\DeclareMathOperator{\BCM}{BCM}
\DeclareMathOperator{\Max}{Max}
\DeclareMathOperator{\Gal}{Gal}
\begin{document}

\title[Symbolic Powers and Strong $F$-regularity]{Strong $F$-regularity and the Uniform Symbolic Topology Property}
\author{Thomas Polstra}
\address{Department of Mathematics, University of Alabama, Tuscaloosa, AL 35487 USA}
\email{tmpolstra@ua.edu}
\thanks{Polstra was supported in part by NSF Grant DMS \#2101890}
\subjclass[2020]{13A35 (Primary); 13A15, 14B05 (Secondary)}

\begin{abstract}
We investigate the containment problem of symbolic and ordinary powers of ideals in a commutative Noetherian domain $R$. Let $R$ be a normal domain of prime characteristic $p>0$ that is $F$-finite or essentially of finite type over an excellent local ring. Assume there exists a finite extension $R\to S$ so that the non-strongly $F$-regular locus of $\mathrm{Spec}(S)$ consists only of isolated points, then there exists a constant $C$ such that for all ideals $I \subseteq R$ and $n \in \mathbb{N}$, the symbolic power $I^{(Cn)}$ is contained in the ordinary power $I^n$. In other words, $R$ enjoys the Uniform Symbolic Topology Property.

Moreover, if $R$ is $F$-finite and strongly $F$-regular, then $R$ enjoys a property that is proven to be stronger: there exists a constant $e_0 \in \mathbb{N}$ such that for any ideal $I \subseteq R$ and all $e \in \mathbb{N}$, if $x \in R \setminus I^{[p^e]}$, then there exists an $R$-linear map $\varphi: F^{e+e_0}_*R \to R$ such that $\varphi(F^{e+e_0}_*x) \notin I$.
\end{abstract}
\maketitle


\section{Introduction}

Throughout this article $R$ is a commutative Noetherian ring with identity. The theory of primary decompositions is foundational to commutative algebra and neighboring disciplines. Emmy Noether's unifying result is every ideal $I \subseteq R$ admits a (minimal) primary decomposition. Unlike the unique factorization of elements in unique factorization domains, primary decompositions of an ideal are not unique at embedded components, only at minimal components. If $\fp \subseteq R$ is a prime ideal and $n \in \mathbb{N}$, then the $n$th symbolic power $\fp^{(n)} := \fp^n R_\fp \cap R$ is the minimal component of $\fp^n$ appearing in every choice of a primary decomposition of $\fp^n$. The containment problem for the prime $\fp$ is the problem of establishing ideal containments $\fp^{(t)} \subseteq \fp^s$, i.e., understanding when the unique minimal component of $\fp^t$ is contained in all possible choices of primary components of $\fp^s$.

The containment problem partially originates from Hartshorne’s investigations into a non-local analogue of local duality. In \cite{hartshorneAffineDuality}, Hartshorne posed the question of when the topologies defined by the symbolic and ordinary powers of a prime $\fp \in \Spec(R)$ are \emph{equivalent}; that is, for every $s\in\NN$ there exists $t \in \mathbb{N}$ such that $\fp^{(t)} \subseteq \fp^s$. Criteria for the equivalence of the adic and symbolic topologies of a prime ideal have been established in \cite{McAdam, Schenzel, McAdamRatliffSymbolic}. Notably, if $R$ is an excellent normal domain and $\fp \subseteq R$ a prime ideal, then the symbolic and adic topologies of $\fp$ are necessarily equivalent.

More generally, for any ideal $I \subseteq R$, $W$ the complement of the union of the associated primes of $I$, and $n \in \mathbb{N}$, the \emph{$n$th symbolic power of $I$} is the ideal $I^{(n)} := I^n R_W \cap R$. The equivalence of symbolic and adic topologies of an ideal implies a deeper relationship. Swanson proved that if the symbolic and adic topologies of an ideal $I$ are equivalent, then there exists a constant $C$, depending on $I$, such that for all $n \in \mathbb{N}$, $I^{(Cn)} \subseteq I^n$, \cite[Main Result]{SwansonLinear}. Swanson’s theorem inspired further investigation into the containment problem, leading to characteristic-dependent developments that reveal a remarkable property in regular rings.

\begin{theorem}[Uniform Symbolic Topology in Non-Singular Rings]
    Let $R$ be a Noetherian non-singular ring and $I\subseteq R$ an ideal. If $h$ is the largest height of an associated prime of $I$, then for all $n \in \mathbb{N}$, 
    \[
    I^{(hn)} \subseteq I^n.
    \]    
    Outline of theorem development:
    \begin{itemize}
        \item Radical ideals of a non-singular ring essentially of finite type over $\CC$, \cite{ELS}.
        \item All ideals of a non-singular ring containing a field, \cite{HHComparison}.
        \item Radical ideals of an excellent non-singular ring of mixed characteristic, \cite{MaSchwedeSymbolic}.
        \item All ideals of a non-singular ring of mixed characteristic, \cite{MurayamaSymbolic}.
    \end{itemize}
\end{theorem}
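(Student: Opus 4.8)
The plan is to organize the argument by characteristic, following the indicated references, after isolating the engine that drives all three cases. That engine has three ingredients: a purely combinatorial \emph{pigeonhole} containment of Frobenius-type powers, valid in every characteristic; the triviality on a regular ring of an appropriate closure operation (tight closure in equal characteristic $p>0$, its big Cohen--Macaulay analogue in mixed characteristic); and some bookkeeping with associated primes that accounts for the appearance of the \emph{largest} height $h$ rather than just a minimal one.

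First the reductions, which are characteristic-independent. Since $I^n=\bigcap_{\mathfrak q\in\Ass(R/I^n)}\bigl(I^nR_{\mathfrak q}\cap R\bigr)$, it is enough to prove $I^{(hn)}R_{\mathfrak q}\subseteq I^nR_{\mathfrak q}$ for each $\mathfrak q\in\Ass(R/I^n)$; using that big symbolic powers localize compatibly (with inclusion in the expected direction) and that the big height of $IR_{\mathfrak q}$ is at most $h$, this brings us to a regular \emph{local} ring $(R,\mathfrak m)$ with $I$ of big height $h$. There the only nontrivial contribution comes from primes $\mathfrak p\in\Ass(R/I)$ for which $R_{\mathfrak p}$ has dimension $h$, so that $\mathfrak pR_{\mathfrak p}$ is minimally generated by exactly $h$ elements; when $\mathfrak m$ itself lies in $\Ass(R/I)$ one simply has $I^{(m)}=I^m$ and there is nothing to prove.

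Now the equal-characteristic $p>0$ engine. The pigeonhole lemma is: if $\mathfrak pR_{\mathfrak p}=(x_1,\dots,x_h)$, then any monomial of degree $hq$ in the $x_i$ is divisible by some $x_i^{\,q}$, whence $(\mathfrak pR_{\mathfrak p})^{hq}\subseteq(\mathfrak pR_{\mathfrak p})^{[q]}$ for every $q=p^{e}$; raising this to the $n$th power gives $(\mathfrak pR_{\mathfrak p})^{hnq}\subseteq(\mathfrak p^n)^{[q]}R_{\mathfrak p}$. Consequently, for $g\in I^{(hn)}$ and every $q=p^e$ the element $g^q\in I^{(hnq)}$ is forced into $(I^n)^{[q]}$ after multiplication by a single nonzero $c\in R^{\circ}$, the element $c$ absorbing the difference between $(I^n)^{[q]}$ and its saturations at the associated primes --- this difference being uniform in $q$ because Frobenius is flat on the regular ring $R$, so the primary decompositions of the $(I^n)^{[q]}$ are Frobenius twists of a single fixed primary decomposition of $I^n$. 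Hence $g\in(I^n)^{*}$, the tight closure of $I^n$; since $R$ is regular, $(I^n)^{*}=I^n$, and we are done. The equal-characteristic $0$ statement follows by the standard reduction to characteristic $p$ (equivalently, by the asymptotic-multiplier-ideal argument of \cite{ELS}), and the case of all ideals in characteristic $p$ is \cite{HHComparison}. In mixed characteristic one runs the same argument with the Frobenius powers replaced by images of $I$ in a perfectoid big Cohen--Macaulay $R$-algebra $B$: the pigeonhole step again forces $I^{(hn)}$ into $I^nB$, and one descends using that a regular ring is pure in $B$; this is the content of \cite{MaSchwedeSymbolic} for radical ideals and of \cite{MurayamaSymbolic} in general.

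I expect the main obstacle to be different in the two settings. In equal characteristic it is the last ingredient: exhibiting a \emph{single} multiplier $c\in R^{\circ}$ that clears all the embedded primes of the ideals $(I^n)^{[q]}$ at once, i.e., the uniform primary-decomposition / Artin--Rees bookkeeping that is precisely what forces the use of the big height $h$ (the pigeonhole lemma and the triviality of tight closure being comparatively soft). In mixed characteristic the genuine difficulty is structural, namely the absence of Frobenius: one must construct and control a perfectoid big Cohen--Macaulay algebra playing the role of a flat Frobenius power well enough for both the pigeonhole step and the descent to go through, which is the technical heart of the work of Ma--Schwede and Murayama cited above.
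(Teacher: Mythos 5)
First, a point of orientation: the paper does not prove this statement — it records it as a known theorem with an ``Outline of theorem development'' pointing to \cite{ELS}, \cite{HHComparison}, \cite{MaSchwedeSymbolic}, \cite{MurayamaSymbolic}. So there is no ``paper's own proof'' to compare against; what follows assesses your sketch on its own terms against the arguments in those references.

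Your overall skeleton (reduce to a regular local ring, run a Frobenius/pigeonhole argument, conclude via triviality of tight closure, transport to characteristic $0$ by reduction mod $p$ and to mixed characteristic via perfectoid big Cohen--Macaulay algebras) is the right one. But there is a genuine gap in the characteristic-$p$ engine, and it is precisely the step you flagged as ``comparatively soft.'' The pigeonhole lemma yields $(\mathfrak pR_{\mathfrak p})^{hq}\subseteq(\mathfrak pR_{\mathfrak p})^{[q]}$ at a minimal prime $\mathfrak p$ of $I$ of height $g\leq h$. Applying this to $g\in I^{(hn)}$ gives, after localizing, $g^q\in I^{hnq}R_{\mathfrak p}\subseteq(\mathfrak p^n)^{[q]}R_{\mathfrak p}$. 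But the tight closure test requires $cg^q\in(I^n)^{[q]}$, and $(\mathfrak p^n)^{[q]}\supseteq(I^n)^{[q]}$ is the \emph{wrong} inclusion: the containment you have lands you in the larger ideal, not the smaller one. The ``gap'' between them grows with $q$ (consider $I=\mathfrak p^2$, where $(\mathfrak p^n)^{[q]}/(\mathfrak p^{2n})^{[q]}$ is a power of $\mathfrak p$ of order comparable to $nq$), so no single multiplier $c$ can close it. The missing ingredient is the Brian\c{c}on--Skoda theorem for regular local rings together with minimal reductions: at $\mathfrak p\in\Ass(I)$ of height $g$, pass (after adjoining a variable for infinite residue field) to a minimal reduction $J\subseteq IR_{\mathfrak p}$ generated by at most $g$ elements; then
\[
I^{(hq)}R_{\mathfrak p}\subseteq\overline{J^{hq}}\subseteq J^{hq-g+1}\subseteq J^{[q]}\subseteq I^{[q]}R_{\mathfrak p},
\]
where the middle step is Brian\c{c}on--Skoda and the next uses the pigeonhole on the $g$-generated ideal $J$ (valid since $hq-g+1\geq g(q-1)+1$ exactly when $h\geq g$). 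This is \cite[Lemma~2.4(a)]{HHComparison}, and it, not the pigeonhole on $\mathfrak p$, is what $c$ is eventually combined with.

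Your description of what $c$ does is also off. Since Frobenius is flat on a regular ring, $\Ass\bigl((I^n)^{[q]}\bigr)=\Ass(I^n)$ for every $q$, so there is no ``difference between $(I^n)^{[q]}$ and its saturations at the associated primes'' for $c$ to absorb. The multiplier's actual role is purely arithmetic: fix any nonzero $c\in I^{(hn)}$, write $q=an+r$ with $0\leq r<n$, observe that $cu^a\in I^{(hn(a+1))}\subseteq I^{(hq)}\subseteq I^{[q]}$ by the lemma above, and raise to the $n$th power to get $c^nu^q=c^nu^{an}u^r\in(I^n)^{[q]}$ with the fixed nonzero element $c^n$. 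Conflating the two roles obscures where the work actually happens, and it is worth stressing that identifying the right structural difficulty is exactly what you would need to carry the argument into mixed characteristic, where the analogue of Lemma~2.4(a) is the genuinely hard part of \cite{MaSchwedeSymbolic} and \cite{MurayamaSymbolic}.
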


Proof methodology that non-singular rings exhibit the above uniform comparison between the symbolic and ordinary powers of ideals have had transformative effects, requiring wide-reaching developments in multiplier and test ideal theory, closure operations, perfectoid geometry, and the construction of big Cohen-Macaulay algebras in rings of all characteristics; see \cite{HHAnnals, HaraYoshida, HaraTestIdeals, TakagiYoshida, Dietz, RG, MaSchwedeSymbolic, BhattCM, HaconLamarcheSchwede, MurayamaSymbolic}. In addition to completing the uniform comparison of ordinary and symbolic powers of all ideals in all regular rings, Murayama’s article \cite{MurayamaSymbolic} provides an excellent exposition of the connection of topics surrounding the comparison between symbolic and ordinary powers of ideals in regular rings.

With limited exception, singularities do not enjoy the same comparisons of ordinary and symbolic powers of ideals.\footnote{A notable exception of a singularity class are diagonally $F$-regular rings, see \cite{DiagonallyFRegular}.} To account for the presence of singularities, we say that a ring $R$ enjoys the \emph{Uniform Symbolic Topology Property} if there exists a constant $C$ so that for all $I\subseteq R$ and $n\in\NN$, if $h$ is the largest height of an associated prime of $I$, then $I^{(Chn)}\subseteq I^n$. \footnote{If $R$ has finite Krull dimension, then $R$ enjoys the Uniform Symbolic Topology Property if and only if there exists a constant $C$ so that for all ideals $I\subseteq R$ and $n\in\NN$, $I^{(Cn)}\subseteq I^n$.} 

\begin{question}[Uniform Symbolic Topology Problem]\label{question USTP}
    What classes of rings enjoy the Uniform Symbolic Topology Property?
\end{question}

Question~\ref{question USTP} is well-posed for an excellent domain $R$ if and only if the normalization map $R\to \overline{R}$ enjoys the going-down property, see Proposition~\ref{proposition equivalent topologies}, c.f. \cite[Proposition]{HKVErratum}. Initial progress on the Uniform Symbolic Topology Problem in singular rings and related considerations are the main focus of the articles \cite{HKV, Walker2, Walker1, Walker3, HKAbelian, HKVfinite, DiagonallyFRegular, DeStefaniGrifoJeffries, DSGJ, HKHypersurface, PolstraZariskiNagata}. Of notable significance, isolated singularities that are either essentially of finite type over a field of characteristic $0$ or of prime characteristic and $F$-finite enjoy the Uniform Symbolic Topology Property, see \cite[Main Result]{HKV}.



\subsection*{Summary of Main Results} If $R$ is a ring of prime characteristic $p>0$ then $F:R\to R$ denotes the Frobenius endomorphism and $F_*R$ the Frobenius pushforward of $R$. The singularities of a prime characteristic ring are determined by properties of the Frobenius endomorphism, a heuristic indicated by Kunz's theorem equating non-singularity of $R$ with $F_*R$ being a flat $R$-module, \cite{KunzPrimeRegular}. 

If $R$ is non-singular and of prime characteristic, then $F$ being flat lends itself to a simple proof that $R$ enjoys the Uniform Symbolic Topology Property, a result whose proof technique for prime ideals is presented in the introduction of \cite{HHComparison}. A critical detail that requires non-singularity, and not easily navigated in singular rings, is that for all ideals $I\subseteq R$ and $e\in\NN$, the associated primes of $I^{[p^e]}:=F^e(I)R$ agree with the set of associated primes of $I$. In rings with singularities, not only can the ideal $I^{[p^e]}$ have associated primes that are not associated to $I$, the collection $\bigcup_{e\in\NN}\Ass(I^{[p^e]})$ can be infinite, even if the singularities of $R$ are mild, \cite{Katzman,SinghSwanson}.

The class of strong $F$-regular singularities, brought to light by Hochster's and Huneke's theory of tight closure, \cite{HHmem}, is a central focus of prime characteristic rings with connections to singularities of the complex minimal model program and mixed characteristic singularities via reduction to positive characteristic arguments and existence of big Cohen-Macaulay algebras. A singularity over $\CC$ is klt if and only if its reduction modulo $p$ is strongly $F$-regular for almost all primes $p$, \cite[Main Result]{HaraWatanabe}, cf. \cite{SmithRational, HaraRational, MehtaSrinivas}. Ma and Schwede define the class of $\BCM$-singularities in \cite{MaSchwedeSingularities}, a characteristic-free notion, which in rings of prime characteristic coincides with the class of strongly $F$-regular singularities by a theorem of Schwede and Smith; see \cite[Definition-Proposition~2.7]{MaSchwedeSingularities} and \cite[Theorem~4.3 and Remark~4.10]{SchwedeSmith}.

Our main contribution to Question~\ref{question USTP} implies every $F$-finite strongly $F$-regular domain of prime characteristic $p>0$ enjoys the Uniform Symbolic Topology Property. The method of proof stems from properties of Jacobian ideals in positive characteristic described by \cite[Lemma~3.6]{HHComparison} and comparisons of ideal topologies under finite extensions found in \cite{HKVfinite}.

\begin{maintheorem}\label{Main theorem USTP in SFR rings}
    Let $R$ be a normal domain of prime characteristic $p>0$ that is either $F$-finite or essentially of finite type over an excellent local ring. If there exists a finite extension $R\to S$ so that $S_\fp$ is strongly $F$-regular for all non-maximal prime ideals $\fp\in\Spec(S)$, then $R$ enjoys the Uniform Symbolic Topology Property.
\end{maintheorem}

It seems reasonable to pursue proof a klt singularity over $\mathbb{C}$, or more generally that a ring of arbitrary characteristic with $\BCM$-singularities, enjoys the Uniform Symbolic Topology Property.  In characteristic $0$, Main Theorem~\ref{Main theorem USTP in SFR rings}, and a reduction to positive characteristic argument, is currently insufficient to assert a klt singularity enjoys the Uniform Symbolic Topology Property. Unlike reduction to positive characteristic arguments of \cite{HHComparison} that generalize characteristic $0$ results of \cite{ELS}, much of the uniformity in the proof of Main Theorem~\ref{Main theorem USTP in SFR rings} depends on the characteristic of $R$ and Artin-Rees numbers. Similar issues are present in recent progress on the Uniform Symbolic Topology Problem in \cite{HKHypersurface} and in the study of numerical measurements of singularities intrinsic to the Frobenius endomorphism. See \cite{CaminataShidelerTuckerZerman} for recent progress in this direction. The pursuit of a proof of Main Theorem~\ref{Main theorem USTP in SFR rings} for mixed characteristic rings with $\BCM$-singularities encounters an additional challenge, a suitable mixed characteristic analogue of \cite[Lemma~3.6]{HHComparison} has yet to be established, see Question~\ref{question symbolic multipliers exist}.

The proof of Main Theorem~\ref{Main theorem USTP in SFR rings} begins as a study of $R$-linear maps $F^e_*R\to R$ of an $F$-finite domain $R$ through the lens of splitting ideals. If $I\subseteq R$ is an ideal and $e\in\NN$, then the \emph{$e$th splitting ideal of $I$} is
\[
I_e(I):=\{r\in R\mid \forall\varphi\in\Hom_R(F^e_*R,R),\, \varphi(F^e_*r)\in I\}.
\]
Splitting ideals became objects of interest in the study of the $F$-signature of a local ring, see \cite{SmithVanDenBergh, HunekeLeuschke} for the origins of $F$-signature theory and \cite{AberbachEnescu,YaoObservations,TuckerFsigExists} for the use of splitting ideals in the study of $F$-signature. In summary, if $(R,\fm,k)$ is a local $F$-finite domain of prime characteristic $p>0$, then the quotients $F^e_*R/F^e_*I_e(\fm)$ are annihilated by $\fm$, $\dim_k(F^e_*R/F^e_*I_e(\fm))$ measures the maximal rank of a free $R$-summand of $F^e_*R$, and the \emph{$F$-signature of $R$} is 
\[
s(R):=\lim_{e\to \infty} \frac{\dim_k(F^e_*R/F^e_*I_e(\fm))}{\rank_R(F^e_*R)},
\]
a well-defined number by \cite[Main Result]{TuckerFsigExists}. The $F$-signature is a fundamental measurement of singularities as a local $F$-finite ring $(R,\fm,k)$ is strongly $F$-regular if and only if $s(R)>0$ by \cite[Main Result]{AberbachLeuschke}. The critical detail of proof of \cite[Main Result]{AberbachLeuschke} being that if $(R,\fm,k)$ is strongly $F$-regular then there exists a constant $e_0$ so that for all $e\in\NN$, $I_{e+e_0}(\fm)\subseteq \fm^{[p^e]}$. We give a uniform extension of their theorem.

\begin{maintheorem}
    \label{Main Theorem USTP for splitting ideals}
    Let $R$ be an $F$-finite strongly $F$-regular domain of prime characteristic $p>0$. There exists a constant $e_0\in\NN$ so that for all ideals $I\subseteq R$ and $e\in\NN$, $I_{e+e_0}(I)\subseteq I^{[p^e]}.$ Equivalently, if $x\in R\setminus I^{[p^e]}$, then there exists $\varphi\in\Hom_R(F^{e+e_0}_*R,R)$ so that $\varphi(F^{e+e_0}_*x)\not\in I$.
\end{maintheorem}

\begin{remark}
    Hypotheses milder than those of Main Theorem~\ref{Main Theorem USTP for splitting ideals} imply that for all $e\in\NN$, $\Ass(I_{e}(I))=\Ass(I)$, see Lemma~\ref{lemma Some properties of splitting ideals} (\ref{associated primes of splitting ideals}). On the other hand, $\bigcup_{e\in\NN}\Ass(I^{[p^e]})$ could be an infinite set,  \cite[Theorem~2.1]{Katzman}, even if $R$ is an $F$-finite strongly $F$-regular unique factorization hypersurface domain, \cite[Theorem~1.2]{SinghSwanson}.
\end{remark}

\begin{remark}
    Let $R$ be an $F$-finite strongly $F$-regular domain of prime characteristic $p>0$. The property that there exists $e_0\in\NN$ so that for all ideals $I\subseteq R$ and $e\in\NN$, $I_{e+e_0}(I)\subseteq I^{[p^e]}$ is a property that implies $R$ enjoys the Uniform Symbolic Topology Property, see Lemma~\ref{lemma method of attack to USTP}.
\end{remark}

\begin{remark}    
    While not a direct statement about test ideals, Main Theorem~\ref{Main Theorem USTP for splitting ideals} concerns foundational constructions from which prime characteristic test ideal theory are derived, thus offering deeper insight into the test ideal theory of prime characteristic rings. For additional details on the connections between test ideals and splitting ideals, see (\ref{Section connections with test ideals}). 
\end{remark}

\subsection*{Paper Organization} Section~\ref{Section Preliminary} organizes preliminary results on $F$-finite rings, linear comparison of ideal topologies, splitting ideals, a background of uniform properties of Noetherian rings, and the theory of uniform symbolic multipliers. Section~\ref{Section Groundwork on powers and splitting ideals} significantly builds upon the prime characteristic theory presented in the preliminary section. Theorem~\ref{Theorem Splitting ideals and powers of ideals} is an interesting insight into splitting ideals as it is applicable to any $F$-finite domain.

\begin{maintheorem}[Theorem~\ref{Theorem Splitting ideals and powers of ideals}]
    Let $R$ be an $F$-finite domain of prime characteristic $p > 0$. There exists a constant $C$ such that for all ideals $I \subseteq R$ and $e \in \mathbb{N}$,
    \[
    I_e(I^C) \subseteq I^{[p^e]}.
    \]
\end{maintheorem}

In an effort to make the novel techniques underlying the main results more transparent, Section~\ref{Section Groundwork on powers and splitting ideals} concludes with a proof of Main Theorems~\ref{Main theorem USTP in SFR rings} and~\ref{Main Theorem USTP for splitting ideals} under simplified hypotheses, but presents a case of the first set of main theorems that are not covered by prior progress on the Uniform Symbolic Topology Problem; see Proposition~\ref{proposition ustp sfr restricted assumptions}. Section~\ref{Section Main Theorems} features Theorem~\ref{theorem USTP in isolated non SFR rings}, a theorem that implies Main Theorem~\ref{Main Theorem USTP for splitting ideals} and Main Theorem~\ref{Main theorem USTP in SFR rings} when $R=S$.

To capture the full statement of Main Theorem~\ref{Main theorem USTP in SFR rings} we extend results of \cite[Section~3]{HKVfinite}, results only applicable to prime ideals, to all ideals. Main Theorem~\ref{Main theorem USTP in SFR rings} then follows as a corollary of Theorem~\ref{theorem USTP in isolated non SFR rings}, Main Theorem~\ref{Main Theorem USTP finite}, and some additional materials found in Section~\ref{Section Symbolic Powers, Intersections, finite extensions}.

\begin{maintheorem}
    \label{Main Theorem USTP finite}
    Let $R$ be a normal domain and assume that $R$ satisfies one of the following hypotheses:
    \begin{itemize}
        \item $R$ is essentially of finite type over an excellent local ring;
        \item $R$ is of prime characteristic and $F$-finite;
        \item $R$ is essentially of finite type over $\ZZ$.
    \end{itemize}
    Let $R\to S$ be a finite normal domain extension of $R$ that enjoys the Uniform Symbolic Topology Property. Then $R$ enjoys the Uniform Symbolic Topology Property.
\end{maintheorem}

\section*{Acknowledgments}

The author thanks Daniel Katz for numerous valuable discussions during the preparation of this manuscript. The author thanks Rankeya Datta for feedback on a preliminary version of the article that led to significant improvements of the main results.

\section{Preliminary Results}\label{Section Preliminary}
\subsection{Rings of Prime Characterisitic} Let $R$ be a Noetherian ring of prime characteristic $p>0$ and for each $e\in\NN$ let $F^e:R\to R$ denote the $e$th iterate of the Frobenius endomorphism. Let $F^e_*R$ denote the $R$-module obtained through restriction of scalars of $F^e:R\to R$. The ring $R$ is said to be \emph{$F$-finite} if $F^e_*R$ is finitely generated for some, equivalently for all, $e\in\NN$.

\begin{definition}
    \label{defintion of F-singularities}
    Let $R$ be a Noetherian ring of prime characteristic $p>0$.
    \begin{itemize}
        \item We say that $R$ is \emph{$F$-pure} if $R\to F^e_*R$ is pure\footnote{A map of $R$-modules $N\to M$ is called \emph{pure} if for $R$-modules $P$, $N\otimes_R P\to M\otimes_R P$ is injective.} for some, equivalently for all, $e\in\NN$.
        \item We say that $R$ is \emph{strongly $F$-regular} if for all elements $c\in R$ avoiding the minimal primes of $R$, there exists $e\in\NN$ so that $R\to F^e_*R\xrightarrow{\cdot F^e_*c}F^e_*R$ is pure. 
    \end{itemize}
\end{definition}

\begin{remark}
    There are naturally competing notions of strong $F$-regularity in non-$F$-finite rings. Our definition of strongly $F$-regular aligns with the notion of \emph{very strongly $F$-regular} as defined in \cite{HochsterYaoTrans}. The prime characteristic results presented in this article pertain to rings that are either $F$-finite or essentially of finite type over an excellent local ring. In these scenarios, the competing notions of strong $F$-regularity discussed in \cite{HochsterYaoTrans} coincide; see \cite[Theorem~2.23 (2)]{HochsterYaoTrans}.
\end{remark}

The following lists well-known properties of $F$-finite rings used throughout the article. Items (\ref{purity condition}) and (\ref{F regular condition}) are standard applications of \cite[Theorem~2.6]{HochsterPurity}.
\begin{theorem}[Properties of $F$-finite rings]
    \label{Kunz's Theorems on F-finite rings}
    Let $R$ be an $F$-finite ring of prime characteristic $p>0$.
    \begin{enumerate}
        \item $R$ has finite Krull dimension, \cite[Proposition~1.1]{KunzExcellent}.
        \item $R$ is excellent, \cite[Theorem~2.5]{KunzExcellent}. Consequently,
        \begin{itemize}
            \item the singular locus of $\Spec(R)$ is closed;
            \item if $R$ is reduced, then $R_\fp$ is analytically unramified for all $\fp\in\Spec(R)$;
            \item if $\fp\in\Spec(R)$, $R_\fp$ is a normal domain, then $R_\fp$ is analytically irreducible.
        \end{itemize}
        \item\label{Kunz regular} $R$ is regular if and only if for some, equivalently for all, $e\in\NN$, $F^e_*R$ is a finitely generated flat $R$-module, \cite[Theorem~2.1]{KunzPrimeRegular}.
        \item\label{purity condition} $R$ is $F$-pure if and only if for some, equivalently for all, $e\in\NN$, there exists an onto $R$-linear map $F^e_*R\to R$.
        \item\label{F regular condition} $R$ is strongly $F$-regular if and only if for all $c\in R$ avoiding the minimal primes of $R$, there exists $e\in\NN$ and an onto $R$-linear map $\varphi:F^e_*R\to R$ so that $\varphi(F^e_*c)=1$.
        \item\label{F regular is normal} If $R$ is strongly $F$-regular then $R$ is normal, \cite[Theorem~5.5 (d)]{HHTAMS}.
        \item The non-strongly $F$-regular locus of $\Spec(R)$ is closed, \cite[Corollary~10.14]{HochsterFoundations}.
    \end{enumerate}
\end{theorem}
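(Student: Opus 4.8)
The plan is to treat this statement as the curated list of standard facts that it is: I would cite the literature for most items and give the short module-theoretic argument only where one is genuinely needed. For item (1) I would simply quote \cite[Proposition~1.1]{KunzExcellent}, and for item (2) \cite[Theorem~2.5]{KunzExcellent}; the three bulleted consequences of excellence are then routine. An excellent ring satisfies the condition $(J\text{-}2)$, so its singular locus is closed. Excellent rings are $G$-rings, so the formal fibers of $R_\fp$ are geometrically regular, and in particular $\widehat{R_\fp}$ is reduced whenever $R_\fp$ is, i.e. $R_\fp$ is analytically unramified. If in addition $R_\fp$ is a normal domain, then $\widehat{R_\fp}$ is normal, and being the completion of a local ring it is local, hence has no nontrivial idempotents, so $\widehat{R_\fp}$ is a normal domain and $R_\fp$ is analytically irreducible. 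Items (3), (6), (7) I would dispatch by citing \cite[Theorem~2.1]{KunzPrimeRegular}, \cite[Theorem~5.5~(d)]{HHTAMS}, and \cite[Corollary~10.14]{HochsterFoundations} respectively.

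The only items carrying real content are (\ref{purity condition}) and (\ref{F regular condition}), and for these the key input is the splitting principle extracted from \cite[Theorem~2.6]{HochsterPurity}: a pure $R$-module map $\varphi\colon R\to M$ with $M$ finitely generated is automatically split. I would recall why. The map $\varphi$ splits if and only if the cokernel $C$ of the map $\Hom_R(M,R)\to\Hom_R(R,R)=R$ obtained by precomposing with $\varphi$ is zero; since $C$ is finitely generated it suffices to see $C\otimes_R\widehat{R_\fm}=0$ for every maximal ideal $\fm$, and because $\Hom$ out of a finitely presented module commutes with the flat base change $R\to\widehat{R_\fm}$, one reduces to $(R,\fm)$ complete local. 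There purity gives an injection $E\hookrightarrow M\otimes_R E$ with $E$ the injective hull of the residue field; dualizing and using the isomorphism $(M\otimes_R E)^\vee\cong\Hom_R(M,R)$ yields a surjection $\Hom_R(M,R)\twoheadrightarrow E^\vee=R$, so $\mathrm{id}_R$ lies in the image and $\varphi$ splits.

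Applying this with $M=F^e_*R$, which is finitely generated because $R$ is $F$-finite, gives item (\ref{purity condition}): $F$-purity of $R$ is by definition purity of $R\to F^e_*R$, which by the splitting principle is equivalent to the existence of an $R$-linear $\psi\colon F^e_*R\to R$ with $\psi(F^e_*1)=1$; such a $\psi$ is visibly onto, and conversely composing any onto $R$-linear $F^e_*R\to R$ with the structural inclusion shows $1$ is hit, producing a splitting and hence purity. For the ``some $e$, equivalently all $e$'' clause I would note that applying $F^e_*$ to a splitting of $R\to F^e_*R$ and composing produces a splitting of $R\to F^{2e}_*R$, and iterating covers all multiples. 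For item (\ref{F regular condition}) I would run the same argument on the composite $R\to F^e_*R\xrightarrow{\cdot F^e_*c}F^e_*R$, whose value at $1$ is $F^e_*c$: strong $F$-regularity says that for every $c$ avoiding the minimal primes some such composite is pure, which by the splitting principle is equivalent to the existence of $\varphi\colon F^e_*R\to R$ with $\varphi(F^e_*c)=1$ (again automatically onto), and conversely such a $\varphi$ splits the composite and forces purity.

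I do not expect a genuine obstacle: the statement is an assembly of references, and the text itself flags items (\ref{purity condition})--(\ref{F regular condition}) as ``standard applications'' of \cite[Theorem~2.6]{HochsterPurity}. If anything, the step requiring the most care is the descent to the complete local case in the splitting principle---checking that $\Hom$ commutes with completion (which needs $M$ finitely presented, automatic here since $R$ is Noetherian) and that Matlis duality is being applied to modules in the category where it is an exact anti-equivalence---but this is entirely routine, and one may alternatively invoke \cite[Theorem~2.6]{HochsterPurity} as a black box, since it already packages the purity-versus-splitting comparison.
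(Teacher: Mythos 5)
Your proposal is correct and takes the same route as the paper, which gives no proof beyond the citations embedded in the statement and the preceding remark that items (\ref{purity condition}) and (\ref{F regular condition}) are standard applications of \cite[Theorem~2.6]{HochsterPurity}; your splitting-principle argument (pure maps into finitely generated modules split, via completion and Matlis duality) is exactly that standard application, and the remaining items are correctly dispatched by the cited references and routine consequences of excellence. The only micro-gap is in the ``some $e$, equivalently all $e$'' clause: iterating a splitting only covers multiples of $e$, so you should also record the equally immediate downward step that a splitting $\psi$ of $R\to F^{e+e'}_*R$ yields the splitting $\psi\circ F^{e}_*\iota$ of $R\to F^{e}_*R$, where $\iota\colon R\to F^{e'}_*R$ is the structural map.
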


Our study of the powers and symbolic powers of ideals is enhanced by integral closures of the powers of an ideal as well as the ``Frobenius symbolic powers'' of an ideal.

\begin{definition}
    \label{definition various powers of ideals}
    Let $R$ be a Noetherian ring, $I\subseteq R$ an ideal, $W$ the complement of the union of the associated primes of $I$, and $e,n\in\NN$. 
\begin{itemize}
    \item $\overline{I}$ is the \emph{integral closure} of $I$ and consists of elements $r\in R$ so that there exists an equation $r^t+a_1r^{t-1}+\cdots + a_{t-1}r+a_t=0$ so that $a_i\in I^i$.
    \item $I$ is \emph{integrally closed} if $I=\overline{I}$.
    \item If $R$ is of prime characteristic $p>0$, then the \emph{$e$th Frobenius symbolic power of $I$} is the ideal $I^{\left([p^e]\right)}:=I^{[p^e]}R_W\cap R$.
    \begin{itemize}
        \item If $R$ is non-singular, i.e., $F^e_*R$ is a flat $R$-module for all $e$, then $\Ass(I) = \Ass(I^{[p^e]})$. In this case, we have $I^{[p^e]} = I^{\left([p^e]\right)}$.
    \end{itemize}
\end{itemize}
\end{definition}

Integral closure of ideals benefits will benefit our study throughout this article using standard techniques involving the theory of minimal reductions, a corollary of Rees' valuation criteria for containment in the integral closure of an ideal, and the Uniform Brian\c{c}on-Skoda Theorem, see \cite[Section~8]{SwansonHuneke}, \cite[Corollary~6.8.12]{SwansonHuneke}, and \cite[Theorem~4.13]{HunekeUniformBounds} respectively. Lemma~\ref{lemma uniform symbolic Frobenius power multipliers} is a typical demonstration of such methods.

\subsection{Linear Equivalence of Ideal Topologies}\label{section linear equivalence}

Let $R$ be a Noetherian ring, $\mathbb{I}=\{I_n\}$ and $\mathbb{J}=\{J_n\}$ be descending chains of ideals of $R$. The $\mathbb{I}$-topology is \emph{finer} than the $\mathbb{J}$-topology if for all $s\in\NN$ there exists $t\in\NN$ so that $I_t\subseteq J_s$. The $\mathbb{I}$-topology is \emph{equivalent} to the $\mathbb{J}$-topology if the $\mathbb{I}$-topology is finer than the $\mathbb{J}$-topology and the $\mathbb{J}$-topology is finer than the $\mathbb{I}$-topology. If $I\subseteq R$ is an ideal, then the $I$-adic topology is determined by the descending chain of ideals $\{I^n\}$ and the $I$-symbolic topology is determined by the descending chain of ideals $\{I^{(n)}\}$.

 Equivalence of the adic and symbolic topologies of all prime ideals of an excellent domain $R$ is equivalent to the going-down property between $R$ and its normalization $\overline{R}$, cf. \cite[Proposition]{HKVErratum}. Relevant to Main Theorem~\ref{Main theorem USTP in SFR rings}, if $R$ is an excellent domain, $R_\fp$ is normal for all non-maximal primes $\fp$, then the going-down property is enjoyed by the normalization map $R\to \overline{R}$. The following proposition outlines necessary details to extend \cite[Proposition]{HKVErratum} to conclude the Uniform Symbolic Topology Problem is well-posed for all excellent domains $R$ that enjoys the going-down property with respect to its normalization.

\begin{proposition}[{Generalization of \cite[Proposition]{HKVErratum}}]
    \label{proposition equivalent topologies}
    Let $R$ be an excellent domain and $\overline{R}$ the normalization of $R$. The following are equivalent:
    \begin{enumerate}
        \item $R\to \overline{R}$ enjoys the going-down property.
        \item For every ideal $I\subseteq R$, the adic and symbolic topologies of $I$ are equivalent. 
    \end{enumerate}
\end{proposition}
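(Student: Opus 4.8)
The plan is to reduce to the normalization $\overline{R}$, which is a normal excellent domain, and then transfer the conclusion back across the module-finite extension $R\hookrightarrow\overline{R}$ using the going down hypothesis together with the Artin--Rees lemma. Since $R$ is excellent, $\overline{R}$ is module-finite over $R$ and is again an excellent domain, now normal. For an ideal $\fa$ of a Noetherian domain $A$ write $\fa^{\langle n\rangle}:=\fa^{n}A_{W_{0}}\cap A$, where $W_{0}=A\setminus\bigcup_{\fp\in\mathrm{Min}(\fa)}\fp$, for the symbolic power formed from the minimal primes only; because $\mathrm{Min}(\fa)\subseteq\Ass(A/\fa)$ one always has $\fa^{(n)}\subseteq\fa^{\langle n\rangle}$. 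Since $I^{n}\subseteq I^{(n)}$ is automatic, it suffices to prove that for each $s$ there is $t$ with $I^{\langle t\rangle}\subseteq I^{s}$.

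The argument has three ingredients. First, Artin--Rees applied to the finite $R$-module $\overline{R}$, its submodule $R$, and the ideal $I$ yields a constant $k$ with $I^{n}\overline{R}\cap R\subseteq I^{n-k}$ for all $n\ge k$. Second, going down for $R\to\overline{R}$ forces every minimal prime $\mathfrak{P}$ of $I\overline{R}$ to contract to a minimal prime of $I$: if some prime $\fq$ satisfied $I\subseteq\fq\subsetneq\mathfrak{P}\cap R$, going down would produce $\mathfrak{Q}\subsetneq\mathfrak{P}$ of $\overline{R}$ lying over $\fq$, hence containing $I\overline{R}$, contradicting minimality of $\mathfrak{P}$. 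Consequently any $w\in R$ avoiding every minimal prime of $I$ avoids every minimal prime of $I\overline{R}$, and unwinding the definitions gives $I^{\langle n\rangle}\subseteq(I\overline{R})^{\langle n\rangle}\cap R$ for all $n$. Third, by \cite[Proposition]{HKVErratum} applied to the normal excellent domain $\overline{R}$ and the ideal $I\overline{R}$, the $(I\overline{R})$-adic and $(I\overline{R})$-symbolic topologies of $\overline{R}$ are equivalent, so given $s$ there is $t$ with $(I\overline{R})^{\langle t\rangle}\subseteq(I\overline{R})^{s}=I^{s}\overline{R}$.

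Stringing these together,
\[
I^{(t)}\ \subseteq\ I^{\langle t\rangle}\ \subseteq\ (I\overline{R})^{\langle t\rangle}\cap R\ \subseteq\ I^{s}\overline{R}\cap R\ \subseteq\ I^{s-k},
\]
so given a target exponent $s_{0}$, one applies the middle step with $s=s_{0}+k$ to obtain $t$ with $I^{(t)}\subseteq I^{s_{0}}$. This is the nontrivial half of the asserted equivalence; the other half is trivial.

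I expect the heart of the matter to be the going down step: pulling minimal primes of $I\overline{R}$ down to minimal primes of $I$ is precisely where the hypothesis is used, and it is also the point where the paper's $\Ass$-based symbolic power could cause friction --- which is why I work with the coarser $\langle\,\cdot\,\rangle$-symbolic power, for which only the elementary containment $I^{(n)}\subseteq I^{\langle n\rangle}$ is needed and the direction we want (adic refines symbolic) is unaffected. A secondary concern is invoking \cite[Proposition]{HKVErratum} in the right generality, namely for arbitrary ideals of a normal excellent domain rather than merely for primes; if only the prime statement is available one must additionally run the reduction from primes to ideals inside $\overline{R}$, using that $\sqrt{I\overline{R}}$ has finitely many minimal primes, that a fixed power of $\sqrt{I\overline{R}}$ lies in $I\overline{R}$, and standard comparisons between $\bigcap_{i}\mathfrak{P}_{i}^{(t)}$ and powers of $\bigcap_{i}\mathfrak{P}_{i}$.
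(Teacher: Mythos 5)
Your approach is genuinely different from the paper's and the first two ingredients are correct, but there is a real gap in the third one that you yourself flag as a "secondary concern."

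The paper does not pass through the normalization at all. Instead it invokes Swanson's linear bound on primary decompositions \cite{SwansonPrimaryDecompositions}: there is a constant $C$ and, for each $n$, a primary decomposition $I^n=\bigcap_i \fp_{i,n}$ with $\fp_i^{Cn}\subseteq\fp_{i,n}$, where $\{\fp_i\}=\bigcup_n\Ass(I^n)$. Writing $\fq_i\subseteq\fp_i$ for a minimal prime of $I$ and using $I^{(m)}\subseteq\fq_i^{(m)}$, the problem is reduced to showing the symbolic topology of each prime $\fq_i$ is finer than the adic topology of $\fp_i$; since $\fq_i\subseteq\fp_i$ this follows from the prime case of \cite[Proposition]{HKVErratum}. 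Thus Swanson's theorem is precisely what converts the ideal-level statement into a statement about primes, which is all that \cite[Proposition]{HKVErratum} provides.

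Your proof, by contrast, reduces modulo Artin--Rees to the normal excellent domain $\overline{R}$ (ingredient 1, correct) and uses going-down exactly once, to pull minimal primes of $I\overline{R}$ back to minimal primes of $I$, yielding $I^{\langle n\rangle}\subseteq(I\overline{R})^{\langle n\rangle}\cap R$ (ingredient 2, correct). The problem is ingredient 3: you invoke \cite[Proposition]{HKVErratum} to get $(I\overline{R})^{\langle t\rangle}\subseteq(I\overline{R})^{s}$, but that reference is for \emph{prime} ideals, and for primes the $\langle\cdot\rangle$- and $(\cdot)$-symbolic powers coincide, so it gives you nothing directly about the arbitrary ideal $I\overline{R}$. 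You acknowledge this and sketch a patch: pass to $L=\sqrt{I\overline{R}}=\bigcap_i\mathfrak{P}_i$ (which is fine: $(I\overline{R})^{\langle t\rangle}\subseteq L^{(t)}$ and $L^{as}\subseteq(I\overline{R})^{s}$), then use \cite[Proposition]{HKVErratum} for each prime $\mathfrak{P}_i$ to get $L^{(cm)}=\bigcap_i\mathfrak{P}_i^{(cm)}\subseteq\bigcap_i\mathfrak{P}_i^{m}$, and finally appeal to ``standard comparisons'' to pass from $\bigcap_i\mathfrak{P}_i^{m}$ to $L^{N}$. But that last containment, $\bigcap_i\mathfrak{P}_i^{m}\subseteq\bigl(\bigcap_i\mathfrak{P}_i\bigr)^{N}$ for $m\gg 0$, is not an elementary fact — it is again the symbolic-to-adic comparison for the radical ideal $L$, which is essentially the statement being proved. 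This is exactly the point at which the paper reaches for Swanson's primary decomposition theorem, and your proof still needs a tool of that strength here (e.g.\ apply the paper's Swanson argument to $\overline{R}$ and the ideal $I\overline{R}$, which is legitimate since $\overline{R}$ trivially has going-down to itself). Once that is supplied, your argument is correct, and it has the modest structural advantage of isolating the going-down hypothesis to a single, transparent step about contraction of minimal primes; but as written the reduction from primes to ideals in $\overline{R}$ is a genuine gap, not a routine verification.
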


\begin{proof}
    Assume the normalization map $R\to \overline{R}$ enjoys the going-down property and let $I\subseteq R$ be an ideal. Let $\{\fp_1,\fp_2,\ldots,\fp_t\}=\bigcup_{n\in\NN}\Ass(I^n)$. By a theorem of Swanson, \cite[Main Result]{SwansonPrimaryDecompositions}, there exists a constant $C$ and primary decompositions $I^n=\fp_{1,n}\cap \fp_{2,n}\cap \cdots \cap \fp_{t,n}$ so that each $\fp_{i,n}$ are $\fp_i$-primary and $\fp_{i}^{Cn}\subseteq \fp_{i,n}$.\footnote{If $\fp_i\in\bigcup_{n\in\NN}\Ass(I^n)$ but $\fp_i\not \in \Ass(I^n)$, let $\fp_{i,n}=R$.} For each $i$, there is a minimal prime $\fq$ of $I$ contained in $\fp_i$. The adic topology of $I$ is equivalent to the adic topology of $\sqrt{I}$. Therefore to show the symbolic topology of $I$ is finer than the adic topology of $I$, it suffices to show for each $\fp_i$ and minimal prime of $I$ of $\fq$ contained in $\fp_i$, the symbolic topology of $\fq$ is finer than the adic topology of $\fp_i$. This is indeed the case as the adic and symbolic topology of the prime $\fq$ are equivalent by \cite[Proposition]{HKVErratum}.

    If the going-down property is not enjoyed by $R\to \overline{R}$, then there exists a prime $\fp\in\Spec(R)$ whose adic and symbolic topologies are not equivalent, see \cite[Proposition]{HKVErratum} for relevant details.
\end{proof}


\subsection{Splitting Ideals}

Recall that if $R$ is a $F$-finite ring of prime characteristic $p>0$, $I\subseteq R$, and $e\in\NN_{\geq 1}$, then
\[
I_e(I):=\{r\in R\mid \forall \varphi\in\Hom_R(F^e_*R,R),\, \varphi(F^e_*r)\in I\}.
\]
We set $I_0(I)=I$ for all ideals $I\subseteq R$. The sets $I_e(I)$ defined above are easily checked to form ideals of $R$ and are the \emph{$e$th splitting ideal of $I$}. The below Lemma~\ref{lemma Some properties of splitting ideals} lists properties of splitting ideals that generalizes and supplements observations of \cite[Section~2]{PolstraSmirnovEquimultiplicity}. All statements and proof methods are likely well-known by experts.

\begin{lemma}[Properties of Splitting Ideals]
    \label{lemma Some properties of splitting ideals}
    Let $R$ be an $F$-finite domain of prime characteristic $p>0$, $I\subseteq R$ an ideal, $W$ the complement of the union of the associated primes of $I$, and $e\in\NN$.
    \begin{enumerate}
        \item\label{splitting ideals and localization} If $S\subseteq R$ is a multiplicative set, then $I_e(I)R_S = I_{e}(IR_S)$.
        \item\label{splitting ideals containment of bracket powers} $I^{[p^e]}\subseteq I_e(I)$.
        \item\label{splitting ideals and colons} For every $c\in R$, $I_e((I:_Rc))=(I_e(I):_Rc^{p^e})$.
        \item\label{splitting ideal containment} If $I\subseteq J$ is a containment of ideals then $I_e(I)\subseteq I_e(J)$.
        \item\label{splitting ideals strict containment} If $R$ is $F$-pure and $I\subsetneq J$ a strict containment of ideals, then $I_e(I)\subsetneq I_e(J)$.
        \item\label{associated primes of splitting ideals} If $R_W$ is $F$-pure then $\Ass(I)=\Ass(I_e(I))$.
        \item\label{splitting ideals regular} If $R$ is non-singular then $I_e(I)=I^{[p^e]}=I^{([p^e])}$.
        \item\label{splitting ideals and intersection} Given  a non-empty collection of ideals $\Lambda$, $I_e\left(\bigcap_{I\in\Lambda}I\right) = \bigcap_{I\in\Lambda}I_e(I)$.
        \item\label{splitting ideals some comparisons 1} If $R_W$ is $F$-pure then $I^{\left([p^{e}]\right)}\subseteq I_{e}(I)$.
        \item\label{splitting ideals some comparisons 2} If $e,e_1\in\NN$ then $I_{e+e_1}(I)\subseteq I_e(I_{e_1}(I))$.
        \item\label{splitting ideals some comparisons 3} If $R_W$ is $F$-pure and $e,e_1\in\NN$, then $I_{e+e_1}(I)\subseteq I_e(I)$.
        \item\label{splitting ideals multiplication containment} For every $c\in R$, $c^{p^e}I_e(I)\subseteq I_e(cI)$.
        \item\label{splitting ideals adjoin variable} Let $T$ be a variable. Then $I_e(I)R[T]=I_e(IR[T])$. 
    \end{enumerate}
\end{lemma}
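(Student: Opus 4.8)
The plan is to work through the thirteen items, most of which rest on two elementary bookkeeping facts about Frobenius-linear maps together with one new construction, which I describe first. Throughout I will use that $F^e_*\colon R\to F^e_*R$ is a bijection of underlying sets which is additive with $a\cdot_{F^e_*}(F^e_*r)=F^e_*(a^{p^e}r)$; in particular multiplication by a fixed ring element, $\mu_a\colon F^e_*R\to F^e_*R$, $z\mapsto (F^e_*a)z$ (product in the ring $F^e_*R$), is $R$-linear with $\mu_a(F^e_*r)=F^e_*(ar)$, which already shows $I_e(I)$ is an ideal. Since $R$ is Noetherian and $F$-finite, $\Hom_R(F^e_*R,R)$ is generated by finitely many $\varphi_1,\dots,\varphi_m$, so $I_e(I)=\bigcap_{k}\{r:\varphi_k(F^e_*r)\in I\}$ and, identifying $R$ with $F^e_*R$ as sets, $F^e_*(I_e(I))=\bigcap_k \varphi_k^{-1}(I)$ as an $R$-submodule of $F^e_*R$; consequently $F^e_*(R/I_e(I))$ embeds $R$-linearly into $\bigoplus_{k=1}^m R/I$ via $\bar z\mapsto(\varphi_k(z)+I)_k$. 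Granting these, items (\ref{splitting ideal containment}), (\ref{splitting ideals containment of bracket powers}), (\ref{splitting ideals and colons}), (\ref{splitting ideals and intersection}), (\ref{splitting ideals some comparisons 2}), (\ref{splitting ideals multiplication containment}) are direct formal computations — (\ref{splitting ideal containment}) and (\ref{splitting ideals and intersection}) from the shape of the defining condition, (\ref{splitting ideals containment of bracket powers}) from $F^e_*(\sum c_i y_i^{p^e})=\sum y_i\cdot_{F^e_*}F^e_*(c_i)$ for $I=(y_1,\dots,y_n)$, (\ref{splitting ideals and colons}) and (\ref{splitting ideals multiplication containment}) from $F^e_*(c^{p^e}r)=c\cdot_{F^e_*}F^e_*r$, and (\ref{splitting ideals some comparisons 2}) from the fact that $\chi\circ(F^{e_1}_*\varphi)\in\Hom_R(F^{e+e_1}_*R,R)$ and $F^{e+e_1}_*r=F^{e_1}_*(F^e_*r)$. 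Item (\ref{splitting ideals and localization}) then follows because $\Hom_R(F^e_*R,R)_S\cong\Hom_{R_S}(F^e_*R_S,R_S)$ and forming the preimages of $I$ under the $\varphi_k$ and finite intersections both commute with localization; item (\ref{splitting ideals adjoin variable}) is proved the same way after writing $F^e_*(R[T])=F^e_*R\otimes_{\FF_p}F^e_*(\FF_p[T])$ and tensoring $\varphi$ with the dual-basis coordinate maps of the free $\FF_p[T]$-module $F^e_*(\FF_p[T])$ to read off coefficients. For item (\ref{splitting ideals regular}): if $R$ is non-singular then $F^e_*R$ is locally free, so locally it admits a dual basis $\varphi_i$ to a basis $F^e_*u_i$; then $F^e_*r=\sum_i\varphi_i(F^e_*r)F^e_*u_i$ gives $r=\sum_i\varphi_i(F^e_*r)^{p^e}u_i\in I^{[p^e]}$ whenever all $\varphi_i(F^e_*r)\in I$, so $I_e(I)=I^{[p^e]}$ locally, hence globally, and $I^{[p^e]}=I^{([p^e])}$ here is recorded in Definition~\ref{definition various powers of ideals}.

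The remaining items (\ref{splitting ideals strict containment}), (\ref{associated primes of splitting ideals}), (\ref{splitting ideals some comparisons 1}), (\ref{splitting ideals some comparisons 3}) need the hypothesis that $R$ (resp.\ $R_W$) is $F$-pure, which I will use through the following construction: since $F$-purity means $R\to F^{e_1}_*R$ is pure, hence split because $F^{e_1}_*R/R$ is finitely presented, there is $\beta\in\Hom_R(F^{e_1}_*R,R)$ with $\beta(F^{e_1}_*1)=1$, and then for any $r\in R$ the $R$-linear map $\theta:=\beta\circ\mu_{r^{p^{e_1}-1}}$ satisfies $\theta(F^{e_1}_*r)=\beta\bigl(F^{e_1}_*(r^{p^{e_1}})\bigr)=r\,\beta(F^{e_1}_*1)=r$. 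Item (\ref{splitting ideals strict containment}) is then immediate: for $x\in J\setminus I$ one has $x^{p^e}\in I_e(J)\setminus I_e(I)$ using (\ref{splitting ideals containment of bracket powers}), (\ref{splitting ideal containment}) and the splitting $\beta$ at level $e$. For (\ref{splitting ideals some comparisons 3}): given $r\in I_{e+e_1}(I)$ and $\varphi\in\Hom_R(F^e_*R,R)$, it suffices to show $\varphi(F^e_*r)\in IR_W$ because $I=IR_W\cap R$; in $R_W$, which is $F$-pure, build $\theta$ with $\theta(F^{e_1}_*(r/1))=r/1$ and set $\Psi:=\varphi_W\circ F^e_*\theta\in\Hom_{R_W}(F^{e+e_1}_*R_W,R_W)$, so that $\Psi(F^{e+e_1}_*(r/1))=\varphi(F^e_*r)/1$; clearing denominators writes $\Psi=\Psi_0/u$ with $\Psi_0\in\Hom_R(F^{e+e_1}_*R,R)$ and $u\in W$, whence $u\,\varphi(F^e_*r)=\Psi_0(F^{e+e_1}_*r)\in I$ and $\varphi(F^e_*r)\in I$ since $u$ is a nonzerodivisor on $R/I$.

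For (\ref{associated primes of splitting ideals}), the inclusion $\Ass(I_e(I))\subseteq\Ass(I)$ needs no hypothesis: $\Ass(I_e(I))=\Ass_R(R/I_e(I))=\Ass_R(F^e_*(R/I_e(I)))\subseteq\Ass_R(\bigoplus_k R/I)=\Ass(I)$, using the embedding above and the identity $\Ass_R(F^e_*N)=\Ass_R(N)$ (valid since $a^{p^e}\in\fp\Leftrightarrow a\in\fp$ for primes $\fp$). For $\Ass(I)\subseteq\Ass(I_e(I))$ under ``$R_W$ is $F$-pure'': given $\fp\in\Ass(R/I)$, write $\fp=(I:_R x)$; by (\ref{splitting ideals and colons}), $I_e(\fp)=(I_e(I):_R x^{p^e})$, so it is enough to show $\fp\in\Ass(R/I_e(\fp))$, reducing to $I=\fp$ prime; localizing at $\fp$, where $R_\fp$ is $F$-pure, items (\ref{splitting ideals and localization}) and (\ref{splitting ideals containment of bracket powers}) give $\sqrt{I_e(\fp R_\fp)}=\fp R_\fp$ while $I_e(\fp R_\fp)\ne R_\fp$ by $F$-purity (again via $\beta$), so $I_e(\fp R_\fp)$ is $\fp R_\fp$-primary and $\fp R_\fp\in\Ass$. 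Item (\ref{splitting ideals some comparisons 1}) then follows: $I^{[p^e]}\subseteq I_e(I)$ by (\ref{splitting ideals containment of bracket powers}), and by (\ref{associated primes of splitting ideals}) the ideal $I_e(I)$ has the same associated primes as $I$, so elements of $W$ are nonzerodivisors on $R/I_e(I)$ and $I_e(I)R_W\cap R=I_e(I)$, whence $I^{([p^e])}=I^{[p^e]}R_W\cap R\subseteq I_e(I)R_W\cap R=I_e(I)$. The main obstacle, and the only place where $F$-purity is essential rather than formal, is precisely this handling of associated primes under $I\mapsto I_e(I)$ in (\ref{associated primes of splitting ideals}), (\ref{splitting ideals some comparisons 1}), (\ref{splitting ideals some comparisons 3}): one must know that $R_W$ being $F$-pure both furnishes the split surjection underlying the construction $\theta=\beta\circ\mu_{r^{p^{e_1}-1}}$ and forces $I_e(\fm)$ to be a proper — hence $\fm$-primary — ideal after localizing; once these two points are secured the rest is the routine localization and saturation bookkeeping indicated above.
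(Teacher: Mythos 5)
Your proof is correct and follows the same overall organization as the paper's: (\ref{splitting ideals and localization}) is handled via the commutation of $\Hom_R(F^e_*R,-)$ with localization, the unconditional items (\ref{splitting ideals containment of bracket powers})--(\ref{splitting ideal containment}), (\ref{splitting ideals and intersection}), (\ref{splitting ideals some comparisons 2}), (\ref{splitting ideals multiplication containment}) are direct computations, and the $F$-pure items (\ref{splitting ideals strict containment}), (\ref{associated primes of splitting ideals}), (\ref{splitting ideals some comparisons 1}), (\ref{splitting ideals some comparisons 3}) are driven by the splitting $\beta$ with $\beta(F^{e_1}_*1)=1$ together with the trick of precomposing with multiplication by an appropriate $p^{e_1}-1$ power. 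A few local differences are worth recording. For (\ref{splitting ideals strict containment}) the paper reduces to the local case and cites \cite[Corollary~2.4]{PolstraSmirnovEquimultiplicity}, whereas you give a short self-contained argument ($x^{p^e}\in I_e(J)\setminus I_e(I)$ for $x\in J\setminus I$ via $\beta$); both are fine, yours being more elementary. For (\ref{associated primes of splitting ideals}) you prove the inclusion $\Ass(I_e(I))\subseteq\Ass(I)$ with no $F$-purity assumption by embedding $F^e_*(R/I_e(I))$ into $\bigoplus_k R/I$ using a finite generating set of $\Hom_R(F^e_*R,R)$ and then invoking $\Ass_R(F^e_*N)=\Ass_R(N)$; the paper instead uses the colon identity (\ref{splitting ideals and colons}) and properness of $I_e(I)$, so it also uses $R_W$ being $F$-pure on that side. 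Your version is genuinely sharper on that half, and the other inclusion (reduce to $I=\fp$ via $\fp=(I:_Rx)$ and (\ref{splitting ideals and colons}), then show $I_e(\fp R_\fp)$ is $\fp R_\fp$-primary) agrees in substance with the paper's though it avoids the contradiction bookkeeping. For (\ref{splitting ideals some comparisons 3}) you argue the positive direction (localize, apply $\Psi=\varphi_W\circ F^e_*\theta$, clear denominators, use that $u\in W$ is a nonzerodivisor mod $I$), while the paper works contrapositively in $R_W$; same content. For (\ref{splitting ideals adjoin variable}) your tensor decomposition $F^e_*(R[T])\cong F^e_*R\otimes_{\FF_p}F^e_*(\FF_p[T])$ with dual-basis projections produces, after unwinding indices $i=ap^e+j$, exactly the explicit map $\varphi_T$ the paper writes down, so again the two are the same argument in different notation. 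One place to tighten the exposition: your one-line justification of $\Ass_R(F^e_*N)=\Ass_R(N)$ covers the inclusion $\Ass_R(N)\subseteq\Ass_R(F^e_*N)$ immediately, but the reverse containment uses that the annihilator of $m$ over the target is $\fp$-primary (or the standard fact about associated primes along the finite map $F^e$); this is standard but deserves a citation or a sentence.
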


\begin{proof}
    The assumption $R$ is $F$-finite, i.e., $F^e_*R$ is a finitely generated $R$-module, implies that if $S$ is a multiplicative set, then $\Hom_R(F^e_*R, R)_S \cong \Hom{R_S}(F^e_*R_S, R_S)$. Statement (\ref{splitting ideals and localization}) is then a direct consequence of $R$ being $F$-finite. Statements (\ref{splitting ideals containment of bracket powers}) and (\ref{splitting ideals and colons}) can then be checked locally at the maximal ideals of $R$ and therefore are the content of \cite[Lemma~2.2 (2)]{PolstraSmirnovEquimultiplicity} and \cite[Lemma~2.2 (7)]{PolstraSmirnovEquimultiplicity} respectively. The containment of (\ref{splitting ideal containment}) is immediate by definition. Statement (\ref{splitting ideals strict containment}) can be checked after localizing at a maximal ideal $\fm$ of $R$ so that $IR_\fm \subsetneq JR_\fm$. Therefore (\ref{splitting ideals strict containment}) follows by (\ref{splitting ideals and localization}) and \cite[Corollary~2.4]{PolstraSmirnovEquimultiplicity}.

    For (\ref{associated primes of splitting ideals}), we first note that if $\fq\not\in\Ass(I)$, then by (\ref{splitting ideals and localization}) we can check that $\fq\not\in\Ass(I_e(I))$ by replacing $R$ by the localization $R_\fq$ and assume that $(R,\fm,k)$ is local, $\fm\not\in \Ass(I)$, and $R_W$ is $F$-pure. Suppose that $c\in \fm\setminus \bigcup_{\fq\in\Ass(I)}\fq$. By (\ref{splitting ideals and colons}), $I_e(I)=I_e((I:_Rc))=(I_e(I):_Rc^{p^e})$. Therefore $\fm\not \in \Ass(I_e(I))$ provided that $I_e(I)$ is a proper ideal of $R$. This is indeed the case since $1\not \in I_e(IR_W)$, as $R_W$ is $F$-pure by assumption, and $I_e(IR_W)=I_e(I)R_W$ by (\ref{splitting ideals and localization}). Therefore $\Ass(I_{e}(I))\subseteq \Ass(I)$. Conversely, if $\fq\in\Ass(I)$, to prove $\fq\in\Ass(I_e(I))$ we can replace $R$ by $R_\fq$ by (\ref{splitting ideals and localization}), assume $(R,\fm,k)$ is local, $\fm\in \Ass(I)$, $R$ is $F$-pure, and demonstrate that $\fm\in\Ass(I_e(I))$. If $I$ is $\fm$-primary, then $I_e(I)$ is $\fm$-primary by (\ref{splitting ideals containment of bracket powers}). If $I$ is not $\fm$-primary, $\fm$ is an associated prime of $I$, and assume by way of contradiction that $\fm$ is not an associated prime of $I_e(I)$. Choose element $c\in \fm$ that avoids the non-maximal associated primes of $I$, avoids the associated primes of $I_e(I)$, and so that $I\subsetneq (I:_R\fm^{\infty})= (I:_R c)$. By (\ref{splitting ideals and colons}) and (\ref{splitting ideals strict containment}), $I_e(I)\subsetneq I_e((I:_Rc))=(I_e(I):_Rc^{p^e})$. But $c$ avoiding the associated primes of $I_e(I)$ implies $I_e(I)=(I_e(I):_Rc^{p^e})$, a contradiction.

    In Claim (\ref{splitting ideals regular}) we assume $R$ is non-singular, i.e., $F^e_*R$ is a flat $R$-module. Restriction of scalars is exact, therefore $\Ass(I^{[p^e]})=\Ass(F^e_*R/IF^e_*R)$ (even if $R$ is singular). The module $F^e_*R$ being flat implies $\Ass(F^e_*R/IF^e_*R)=\Ass(I)$. Therefore $I^{[p^e]}=I^{\left([p^e]\right)}$. The claim $I_e(I)=I^{[p^e]}$ can then be checked locally by (\ref{splitting ideals and localization}), i.e., we can assume $R$ is a regular local ring. In which case $F^e_*R$ is a free $R$-module. Then $r\not\in I^{[p^e]}$ if and only if $F^e_*r\not \in IF^e_*R$ if and only if there is a choice of projection onto a free summand $\pi:F^e_*R\to R$ so that $\pi(F^e_*r)\not \in I$.

    For statement (\ref{splitting ideals and intersection}), let $\Lambda$ be a non-empty collection of ideals of $R$. Then $r\in I_e\left(\bigcap_{I\in\Lambda}I\right)$ if and only if for all $\varphi\in\Hom_R(F^e_*R,R)$, $\varphi(F^e_*r)\in \bigcap_{I\in\Lambda}I$ if and only if $r\in \bigcap_{I\in\Lambda}I_e(I)$. Statement (\ref{splitting ideals some comparisons 1}) follows from (\ref{splitting ideals containment of bracket powers}) and (\ref{associated primes of splitting ideals}).

    To prove (\ref{splitting ideals some comparisons 2}) we assume $r\not\in I_{e}(I_{e_1}(I))$. There exists $\varphi\in\Hom_R(F^e_*R,R)$ so that $\varphi(F^e_*r)\not \in I_{e_1}(I)$. Therefore there exists $\psi:F^{e_1}_*R\to R$ so that $\psi(F^{e_1}_*\varphi(F^e_*r))\not \in I$. Then $\lambda:=\psi\circ F^{e_1}_*\varphi\in\Hom_R(F^{e+e_1}_*R, R)$ is so that $\lambda(F^{e+e_1}_*r)\not \in I$. Therefore $r\not\in I_{e+e_1}(I)$.

    For statement (\ref{splitting ideals some comparisons 3}), by (\ref{associated primes of splitting ideals}), we can replace $R$ by $R_W$ and may assume that $R$ is $F$-pure. Suppose that $r\not\in I_e(I)$. Let $\varphi:F^e_*R\to R$ be so that $x:=\varphi(F^e_*r)\not\in I$. The property $R$ is $F$-pure implies there exists $R$-linear map $\psi:F^{e_1}_*R\to R$ so that $\psi(F^{e_1}_*1)=1$. Let $\lambda$ denote the following composition of maps,
    \[
    \lambda: F^{e+e_1}_*R\xrightarrow{F^{e_1}_*\varphi} F^{e_1}_*R \xrightarrow{\cdot F^{e_1}_*x^{p^{e_1}-1}}F^{e_1}_*R\xrightarrow{\psi} R.
    \]
    Then $\lambda(F^{e+e_1}_*r)=x\not\in I$, hence $r\not\in I_{e+e_1}(I)$.

    To prove statement (\ref{splitting ideals multiplication containment}), if $c\in R$ and $r\in I_{e}(I)$, then for every $\varphi\in\Hom_R(F^{e}_*R,R)$, $\varphi(F^{e}_*c^{p^{e}}r)=c\varphi(F^e_*r)\in cI$. Therefore $c^{p^e}I_e(I)\subseteq I_e(cI)$ as claimed.

    We give $R[T]$ the standard grading over $R$ in statement (\ref{splitting ideals adjoin variable}). First assume $f(T)\in R[T]\setminus I_e(I)R[T]$. Suppose that $f(T)=\sum f_iT^i$ and $f_c\not\in I_e(I)$. Fix an $R$-linear map $\varphi: F^e_*R\to R$ so that $\varphi(F^e_*f_c)\not\in I$. Construct an $R[T]$-linear map $\varphi_T: F^e_*R[T]\to R[T]$ defined on homogeneous elements, and extending linearly, by
    \[
    \varphi_T(F^e_*g T^\ell) =
    \begin{cases} 
    \varphi(F^e_*g)T^{\left\lfloor \frac{\ell}{p^e} \right\rfloor} & \text{if } \ell \equiv c\mod{p^e} \\
    0 & \text{if } \ell \not\equiv c\mod{p^e}
    \end{cases}.
    \]
    Then $\varphi_T$ an $R[T]$-linear map so that $\varphi_T(F^e_*f(T))\not\in IR[T]$.

    Conversely, assume $f(T)\in R[T]\setminus I_e(IR[T])$ and let $\psi:F^e_*R[T]\to R[T]$ be an $R[T]$-linear map so that $\psi(F^e_*f(T))\not\in IR[T]$. Suppose that $f(T)=\sum f_iT^i$. Then there exists an index $c$ so that $\psi(F^e_*f_cT^c)\not \in IR[T]$. As an $R$-module, $R[T]$ is free with basis $\{T^j\}_{j\in\NN}$. Hence for an appropriate choice of projection $\pi: R[T]\to R$, $\pi(\psi(F^e_*f_cT^c))\not \in I$. Let $\varphi$ be the composition of $R$-linear maps,
    \[
    \varphi: F^e_*R\xrightarrow{F^e_*r\mapsto F^e_*rT^c} F^e_*R[T]\xrightarrow{\psi} R[T]\xrightarrow{\pi} R.
    \]
    Then $\varphi(F^e_*f_c)\not \in I$, implying $f_c\not\in I_e(I)$. Therefore $f(T)\not\in I_e(I)R[T]$.
\end{proof}

The following proposition is independent of the proofs of the main theorems, but illustrates interesting behavior of splitting ideals in Gorenstein rings. Recall that if $R$ is an $F$-finite non-singular domain, then Kunz's Theorem implies for all $e\in\NN$, $I_e(I)=I^{[p^e]}$. Conversely, if $(R,\fm,k)$ is local, then $I_e(\fm)=\fm^{[p^e]}$ is easily checked to be equivalent to $F^e_*R$ being a free $R$-module, equivalently $R$ is regular by Kunz's theorem. When $R$ is singular, then there can still exist large classes of ideals $I$ with the property $I_e(I)=I^{[p^e]}$ for all $e\in\NN$.

\begin{proposition}
    \label{proposition splitting ideals parameters gorenstein}
    Let $R$ be an $F$-finite Gorenstein ring of prime characteristic $p>0$ and $\underline{x}=x_1,x_2,\ldots,x_h$ a regular sequence. Then for all $e\in\NN$, $I_e((\underline{x}))=(\underline{x})^{[p^e]}$.
\end{proposition}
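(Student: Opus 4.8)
The plan is to establish the nontrivial inclusion $I_e((\underline{x}))\subseteq (\underline{x})^{[p^e]}$; the reverse inclusion $(\underline{x})^{[p^e]}\subseteq I_e((\underline{x}))$ is Lemma~\ref{lemma Some properties of splitting ideals}~(\ref{splitting ideals containment of bracket powers}). Write $\overline{R}:=R/(\underline{x})$ and $\widetilde{R}:=R/(\underline{x})^{[p^e]}$. Because $\underline{x}$ is a regular sequence on $R$ and multiplication by $x_i$ on $F^e_*R$ is, after identifying underlying abelian groups, multiplication by $x_i^{p^e}$ on $R$, the sequence $\underline{x}$ is also a regular sequence on $F^e_*R$, and
\[
F^e_*R\otimes_R\overline{R}\;=\;F^e_*R/(\underline{x})F^e_*R\;=\;F^e_*\widetilde{R},
\]
an $\overline{R}$-module whose underlying abelian group is $R/(\underline{x})^{[p^e]}$ (indeed $(\underline{x})F^e_*R$ is generated by the elements $F^e_*(x_i^{p^e}m)$). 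If $m\in R\setminus(\underline{x})^{[p^e]}$, then the image $\widetilde{m}$ of $F^e_*m$ in $F^e_*\widetilde{R}$ is nonzero, so it suffices to produce $\varphi\in\Hom_R(F^e_*R,R)$ whose image under $\Hom_R(F^e_*R,R)\to\Hom_R(F^e_*R,\overline{R})=\Hom_{\overline{R}}(F^e_*\widetilde{R},\overline{R})$ does not annihilate $\widetilde{m}$: for then $\varphi(F^e_*m)\notin(\underline{x})$, whence $m\notin I_e((\underline{x}))$.

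The argument then rests on two homological inputs, each of which uses that $R$, hence also $\overline{R}$, is Gorenstein. First, since $R$ is Cohen--Macaulay and $F$-finite, and since powers of a regular sequence again form a regular sequence, one has $\depth_R F^e_*R=\depth R=\dim R$, so $F^e_*R$ is a maximal Cohen--Macaulay $R$-module; over the Gorenstein ring $R$ this forces $\Ext^i_R(F^e_*R,R)=0$ for all $i\geq 1$. As $\underline{x}$ is a regular sequence, the truncated Koszul complex on $x_1,\dots,x_h$ is a free resolution of $(\underline{x})$ of length $h-1$, so iterated dimension shifting gives $\Ext^1_R(F^e_*R,(\underline{x}))\cong\Ext^h_R(F^e_*R,R)=0$ (the case $h=0$ being trivial). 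Applying $\Hom_R(F^e_*R,-)$ to $0\to(\underline{x})\to R\to\overline{R}\to 0$ now shows that $\Hom_R(F^e_*R,R)\to\Hom_R(F^e_*R,\overline{R})$ is surjective. Second, by the same depth count $F^e_*\widetilde{R}=F^e_*R/(\underline{x})F^e_*R$ is a maximal Cohen--Macaulay module over the Gorenstein ring $\overline{R}$; a maximal Cohen--Macaulay module over a Gorenstein local ring is reflexive, hence torsionless, so the nonzero element $\widetilde{m}\in F^e_*\widetilde{R}$ fails to be annihilated by some $\overline{R}$-linear map $\overline{\varphi}\colon F^e_*\widetilde{R}\to\overline{R}$.

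Putting these together: view $\overline{\varphi}$ as an element of $\Hom_R(F^e_*R,\overline{R})$ via the identification above, lift it along the surjection of the first input to $\varphi\in\Hom_R(F^e_*R,R)$, and observe that the image of $\varphi(F^e_*m)$ in $\overline{R}$ is $\overline{\varphi}(\widetilde{m})\neq 0$, so $\varphi(F^e_*m)\notin(\underline{x})$. Hence $m\notin I_e((\underline{x}))$, which gives $I_e((\underline{x}))\subseteq(\underline{x})^{[p^e]}$ and completes the proof.

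The step I expect to be the crux, and the one that genuinely forces the Gorenstein hypothesis, is the surjectivity of $\Hom_R(F^e_*R,R)\to\Hom_R(F^e_*R,R/(\underline{x}))$, equivalently the vanishing $\Ext^1_R(F^e_*R,(\underline{x}))=0$: it relies both on $(\underline{x})$ admitting a finite free resolution (so the Ext computation collapses onto $\Ext^h_R(F^e_*R,R)$) and on $F^e_*R$, as a maximal Cohen--Macaulay module over a Gorenstein ring, having no higher self-Ext into $R$. Over a merely Cohen--Macaulay local ring neither this surjectivity nor the cyclicity of $\Hom_R(F^e_*R,R)$ need hold, so the Gorenstein assumption is doing real work here.
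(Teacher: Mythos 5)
Your proof is correct, and it reaches the conclusion by a route that is genuinely different from, though closely related to, the paper's. The paper also invokes \cite[Theorem~3.3.10]{BrunsHerzog}, but in the form of the embedding construction: a maximal Cohen--Macaulay module $M$ over the Gorenstein local ring $R$ sits in a short exact sequence $0 \to M \to R^{\oplus t} \to C \to 0$ with $C$ again maximal Cohen--Macaulay (this is the biduality map $M \cong M^{**} \hookrightarrow R^{\oplus t}$ obtained by dualizing a free cover of $M^{*}$). Since $C$ is MCM and $\underline{x}$ is a regular sequence, $\Tor_1^R(C, R/(\underline{x})) = 0$, hence $M/(\underline{x})M \hookrightarrow (R/(\underline{x}))^{\oplus t}$; an element of $M$ outside $(\underline{x})M$ therefore has some coordinate outside $(\underline{x})$, and that coordinate projection is the desired functional. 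You instead split the work into two pieces: surjectivity of $\Hom_R(F^e_*R, R) \to \Hom_R(F^e_*R, R/(\underline{x}))$, which you get from $\Ext^1_R(F^e_*R,(\underline{x})) = 0$ via the Koszul resolution and $\Ext^{\geq 1}_R(F^e_*R, R) = 0$; and torsionless-ness of the MCM $\overline{R}$-module $F^e_*R/(\underline{x})F^e_*R$ over the Gorenstein quotient $\overline{R} = R/(\underline{x})$, which produces the functional downstairs that you then lift. Both routes rest on the same MCM duality theory over Gorenstein rings (vanishing higher Ext and reflexivity), but the paper extracts the functional as a coordinate of a single embedding over $R$, while you construct it over $\overline{R}$ and lift; your version makes the two separate uses of the Gorenstein hypothesis more explicit, at the modest cost of an extra $\Ext$ computation. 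Both arguments are complete and valid.
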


\begin{proof}
    The claimed equality can be checked locally by Lemma~\ref{lemma Some properties of splitting ideals}~(\ref{splitting ideals and localization}). We therefore assume $(R,\fm,k)$ be a local $F$-finite Gorenstein ring. A standard application of \cite[Theorem~3.3.10]{BrunsHerzog} is if $M$ is a finitely generated Cohen-Macaulay $R$-module, then there exists a short exact sequence of Cohen-Macaulay $R$-modules, $0 \to M \xrightarrow{\varphi} R^{\oplus t} \to C \to 0$. A sequence is constructed by surjecting a free module $R^{\oplus t}$ onto $\Hom_R(M, R)$ and then applying the functor $\Hom_R(-, R)$ to the resulting short exact sequence. The module $C$ being Cohen-Macaulay implies $\Tor_1(C, R/(\underline{x})) = 0$. Equivalently, if $\eta \in M \setminus (\underline{x})M$, then $\varphi(m) \in R^{\oplus t} \setminus (\underline{x})R^{\oplus t}$, and projecting onto an appropriate summand of $R^{\oplus t}$ defines an $R$-linear map $\psi: M \to R$ such that $\psi(m) \in R \setminus (\underline{x})$. 
    
    As an $R$-module, $F^e_*R$ is Cohen-Macaulay, and $r \in R \setminus (\underline{x})^{[p^e]}$ if and only if $F^e_*r \in F^e_*R \setminus (\underline{x})F^e_*R$. Therefore if $r\in R\setminus (\underline{x})^{[p^e]}$ then there exists an $R$-linear map $\psi: F^e_*R \to R$ such that $\psi(F^e_*r) \notin (\underline{x})$. Consequently, $I_e(\underline{x}) \subseteq (\underline{x})^{[p^e]}$. By Lemma~\ref{lemma Some properties of splitting ideals} (\ref{splitting ideals containment of bracket powers}), $(\underline{x})^{[p^e]}=I_e((\underline{x}))$.
\end{proof}

\subsection{Connections with Test Ideals}\label{Section connections with test ideals}
Splitting ideals are the foundations of prime characteristic test ideal theory. Let $R$ be an $F$-finite domain of prime characteristic $p > 0$ and Krull dimension $d$. Given an ideal $I \subseteq R$, define
    \[
    I_{[1/p^e]}(I) := \bigcap_{\left\{\fb \subseteq R \ \middle| \ I \subseteq I_e(\fb)\right\}}\fb.
    \]
An implication of Lemma~\ref{lemma Some properties of splitting ideals} (\ref{splitting ideals and intersection}) is that $I\subseteq I_e(I_{[1/p^e]}(I))$. Therefore $I_{[1/p^e]}(I)$ enjoys the following alternative description.
    \begin{proposition}
        \label{prop frobenius roots and splitting ideals}
        Let $R$ be an $F$-finite domain of prime characteristic $p>0$ and Krull dimension $d$. If $I\subseteq R$ is an ideal and $e\in\NN$, then $I_{[1/p^e]}(I)$ is the unique smallest ideal $\fb$ satisfying $I \subseteq I_e(\fb)$.
    \end{proposition}

    If $R$ is non-singular and $\fb$ is any ideal, then $I_e(\fb) = \fb^{[p^e]}$. Hence, in the non-singular case, $I_{[1/p^e]}(I)$ is the unique smallest ideal $\fb$ such that $I \subseteq \fb^{[p^e]}$. It is common to write $I_{[1/p^e]}(I)$ as $I^{[1/p^e]}$ when $R$ is non-singular, see \cite[Definition~2.2]{BlickleMustataSmith}.
    
    Given an ideal $I \subseteq R$ and a real number $t > 0$, the \emph{test ideal} of the pair $(R, I^t)$, denoted $\tau(R, I^t)$, is benefited by several characterizations. Notably, $\tau(R,I^t)$ is realized as the smallest non-zero ideal $J \subseteq R$ such that for all $e \in \NN$ and $\varphi\in\Hom_R(F^e_*R,R)$, 
    \[
    \varphi(F^e_*I^{\lceil t(p^e-1) \rceil}J)\subseteq J,
    \]
    see \cite[Theorem~6.3]{SchwedeCenters}. Equivalently, $\tau(R, I^t)$ is the smallest non-zero ideal of $R$ so that for all $e\in\NN$,
    \[
    I_{[1/p^e]}\left(I^{\lceil t(p^e - 1) \rceil}J\right) \subseteq J.
    \]
    When $R$ is non-singular of dimension $d$, then test ideal theory provides proof that for all ideals $I\subseteq R$ and $n\in\NN$, $I^{(dn)}\subseteq I^n$. If $I\subseteq R$ and $n \in \NN$ then there are ideal containments:
    \[
    I^{(dn)} \subseteq \tau(R, I^{(dn)}) \subseteq \tau(R, (I^{(dn)})^{1/n})^n \subseteq I^n.
    \]
    The first containment holds under milder conditions, such as when $R$ is strongly $F$-regular. The third containment is an application of the Brian\c{c}on-Skoda Theorem. The second containment requires the ring to be regular and follows from the subadditivity theorem, see \cite[Theorem~4.5]{HaraYoshida}. Subadditivity of multiplier/test ideals is not generally true in singular rings, see \cite{TWSubadditivityFails, Hsiao}.

\subsection{Uniform Properties of Noetherian Rings}

In \cite{HunekeUniformBounds}, Huneke brought to light ``deeper and hidden finiteness properties'' of a Noetherian ring $R$ that go exceedingly beyond the definition all ideals of $R$ are finitely generated. 

\begin{definition}[Uniform Properties]
    \label{definition uniform properties Artin Rees and BS and USTP}
    Let $R$ be a Noetherian ring of finite Krull dimension.
    \begin{itemize}
        \item An ideal $J\subseteq R$ has the \emph{Uniform Artin-Rees Property} if there exists a constant $A$ so that for every ideal $I\subseteq R$ and $n\in\mathbb{N}$,
        \[
        J\cap I^{n+A}\subseteq JI^n.
        \]
        The constant $A$ is a \emph{Uniform Artin-Rees Bound} of the ideal $J\subseteq R$. The ring $R$ has the \emph{Uniform Artin-Rees Property} if every ideal of $R$ enjoys the Uniform Artin-Rees Property.

        \smallskip
        
        \item The ring $R$ has the \emph{Uniform Brian\c{c}on-Skoda Property} if there is a constant $B$ so that for all ideals $I\subseteq R$ for all $n\in\mathbb{N}$,
        \[
        \overline{I^{n+B}}\subseteq I^n.
        \]
        The constant $B$ is a \emph{Uniform Brian\c{c}on-Skoda Bound} of $R$.
    \end{itemize}
\end{definition}

Huneke's introduction of the Uniform Artin-Rees and Brian\c{c}on-Skoda properties were solidified as fundamental characteristics of several large classes of rings, see \cite[Theorem~4.12 and Theorem~4.13]{HunekeUniformBounds}. In particular, the Uniform Artin-Rees Property and the Uniform Brian\c{c}on-Skoda Property are applicable in our investigations of $F$-finite rings.

\begin{theorem}[Huneke's Uniform Theorems and $F$-finite Rings]
    \label{Uniform Theorems}
    Let $R$ be an $F$-finite ring of prime characteristic $p>0$.
    \begin{enumerate}
        \item \cite[Theorem~4.12]{HunekeUniformBounds}, $R$ enjoys the Uniform Artin-Rees Property.
        \item \cite[Theorem~4.13]{HunekeUniformBounds}, If $R$ is reduced then $R$ enjoys the Uniform Brian\c{c}on-Skoda Property.
    \end{enumerate}
\end{theorem}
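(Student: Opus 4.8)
Both assertions are specializations of Huneke's uniform theorems \cite{HunekeUniformBounds}, so the plan is not to reprove those theorems but to check that an $F$-finite ring of prime characteristic $p$ lies in the class of rings to which \cite[Theorem~4.12]{HunekeUniformBounds} and \cite[Theorem~4.13]{HunekeUniformBounds} apply, and to record the standing hypotheses of Definition~\ref{definition uniform properties Artin Rees and BS and USTP} along the way. The first thing I would note is that an $F$-finite ring has finite Krull dimension by Kunz (Theorem~\ref{Kunz's Theorems on F-finite rings}(1)), so the Uniform Artin--Rees and Uniform Brian\c{c}on--Skoda properties are meaningful for such $R$.

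For part (1): the defining condition that $F^1_*R$ be a finitely generated $R$-module is exactly the statement that $R$ is module-finite over its subring $R^p$ (equivalently, that $R^{1/p}$ is module-finite over $R$). Rings of characteristic $p$ that are module-finite over $R^p$ form one of the classes for which Huneke establishes the Uniform Artin--Rees property in \cite[Theorem~4.12]{HunekeUniformBounds}, so part (1) follows immediately. For part (2) I would additionally invoke that $F$-finite rings are excellent (Theorem~\ref{Kunz's Theorems on F-finite rings}(2)); hence a reduced $F$-finite ring of characteristic $p$ is an excellent reduced ring that is module-finite over $R^p$, which is precisely the situation covered by \cite[Theorem~4.13]{HunekeUniformBounds}, yielding part (2).

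The only substantive content is therefore concealed in the citation for part (2), and that is the step I would flag as the real work: Huneke's proof of the uniform Brian\c{c}on--Skoda bound proceeds via the tight-closure form of the Brian\c{c}on--Skoda theorem together with the existence of a nonzerodivisor (completely stable) test element in an excellent reduced ring of characteristic $p$, combined with a uniform Artin--Rees bound for a suitable ideal generated by such an element; reducedness enters exactly here, both so that integral closures of ideals are controlled by valuations and so that a genuine test element is available. Since all of these ingredients are supplied by the cited references, once the scope check above is made there is nothing further to prove; the ``obstacle'' is purely the bookkeeping of matching Huneke's hypotheses with the properties of $F$-finite rings catalogued in Theorem~\ref{Kunz's Theorems on F-finite rings}.
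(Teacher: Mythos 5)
The paper states Theorem~\ref{Uniform Theorems} with no proof beyond the citations to \cite[Theorems~4.12 and 4.13]{HunekeUniformBounds}, so there is nothing to reprove; what is needed is exactly the scope-check you carry out. Your verification is correct and is the intended justification: $F$-finiteness (for $e=1$) is equivalent to $R$ being module-finite over $R^p$, which is one of the hypothesis clauses in Huneke's Theorems~4.12 and 4.13; $F$-finite rings have finite Krull dimension so the uniform properties are well-posed in the sense of Definition~\ref{definition uniform properties Artin Rees and BS and USTP}; and the excellence of $F$-finite rings from Theorem~\ref{Kunz's Theorems on F-finite rings}(2), together with reducedness, supplies the test-element input needed for the Brian\c{c}on--Skoda clause. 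This matches the paper's treatment, which implicitly relies on the same facts from Theorem~\ref{Kunz's Theorems on F-finite rings}.
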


\subsection{Symbolic Multipliers}

Proofs of the main theorems benefit from the existence of elements that uniformly multiply symbolic powers of ideals into a linearly bounded difference of ordinary powers of ideals. Such elements are referred to as \emph{uniform symbolic multipliers} and the existence of such elements are central to arguments found in \cite{HKV, HKAbelian, HKHypersurface, PolstraZariskiNagata}.

\begin{definition}
    \label{definition uniform multiplier}
    Let $R$ be a Noetherian ring and $z\in R$ an element that avoids all minimal primes of $R$. 
    \begin{itemize}
        \item We say that $z$ is a \emph{uniform symbolic multiplier} if there exists a constant $C$ so that for all ideals $I\subseteq R$ and $n\in \NN$,
        \[
        z^nI^{(Cn)}\subseteq I^n.
        \]
        The constant $C$ is called an \emph{uniform symbolic multiplier constant of $z$}.
        \item If $R$ has prime characteristic $p>0$, then $z$ is a \emph{uniform Frobenius symbolic multiplier} if for all $e\in\NN$ and $I\subseteq R$,
        \[
        z^{p^e}I^{\left([p^e]\right)}\subseteq I^{[p^e]}.
        \]
    \end{itemize}

\end{definition}

\begin{remark}
    The definition of a uniform symbolic multiplier given in Definition~\ref{definition uniform multiplier} is a weaker form of the definition provided by \cite[Definition~3.1]{HKAbelian}. 
\end{remark}

As indicated in the proof of {\cite[Lemma~3.3]{HKAbelian}, minor modifications to the proofs of \cite[Proposition~3.4 and Theorem~3.5]{HKV} prove the existence of uniform symbolic and uniform Frobenius symbolic multipliers. The following lemma contains a modest improvement of {\cite[Lemma~3.3]{HKAbelian} that is necessary to the proof of Main Theorem~\ref{Main Theorem USTP for splitting ideals}. 

\begin{lemma}[{\cite[Lemma~3.3]{HKAbelian}}]
    \label{lemma uniform Frobenius multipliers}
    Let $R$ be an $F$-finite domain and $\JJ\subseteq R$ the reduced ideal defining the singular locus of $\Spec(R)$. There exists $e_0\in\NN$ so that the nonzero elements of $\JJ^{[p^{e_0}]}$ are uniform symbolic multipliers and uniform Frobenius symbolic multipliers of $R$. Moreover, $e_0\in\NN$ can be chosen so that for all ideals $I\subseteq R$ and $e\in\NN$,
    \[
    \JJ^{[p^{e+e_0}]}I_e(I)\subseteq I^{[p^e]}.
    \]
\end{lemma}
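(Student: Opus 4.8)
The plan is to prove the three assertions separately. Note first that each of them is preserved under enlarging $e_0$: since $\JJ^{[p^{e_0+1}]}\subseteq\JJ^{[p^{e_0}]}$ the set of nonzero elements of $\JJ^{[p^{e_0}]}$ only shrinks, and $\JJ^{[p^{e+(e_0+1)}]}I_e(I)\subseteq\JJ^{[p^{e+e_0}]}I_e(I)$; hence it suffices to produce a suitable $e_0$ for each statement and then pass to the maximum.

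For the assertions that the nonzero elements of $\JJ^{[p^{e_0}]}$ are uniform symbolic multipliers and uniform Frobenius symbolic multipliers I would run the arguments of \cite[Proposition~3.4 and Theorem~3.5]{HKV} (as organized in \cite[Lemma~3.3]{HKAbelian}) essentially verbatim. Those proofs are written for rings essentially of finite type over a field, but the only properties of $R$ they use are: $R$ is excellent of finite Krull dimension with closed singular locus, $R$ is reduced, and $R$ enjoys the Uniform Artin--Rees and Uniform Brian\c{c}on--Skoda properties (together with Swanson's uniform primary decomposition theorem, \cite[Main Result]{SwansonPrimaryDecompositions}). By Theorem~\ref{Kunz's Theorems on F-finite rings} and Theorem~\ref{Uniform Theorems} all of these hold for an $F$-finite domain, so the arguments transfer without change and yield some $e_1\in\NN$ with the two multiplier properties for $\JJ^{[p^{e_1}]}$.

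The new content is the containment $\JJ^{[p^{e+e_0}]}I_e(I)\subseteq I^{[p^e]}$. I would begin by observing that any $d\in\JJ$ localizes $R$ onto the regular locus, since $\JJ=\sqrt{\JJ}$ defines the singular locus, so $V(d)\supseteq V(\JJ)$ and $R_d$ is regular; consequently Lemma~\ref{lemma Some properties of splitting ideals}(\ref{splitting ideals and localization}) and (\ref{splitting ideals regular}) give
\[
I_e(I)R_d=I_e(IR_d)=(IR_d)^{[p^e]}=I^{[p^e]}R_d .
\]
Intersecting these over a finite generating set of $\JJ$ yields $I_e(I)\subseteq (I^{[p^e]}:_R\JJ^{\infty})$, and since $(I^{[p^e]}:_R\JJ^\infty)$ is an ideal this forces
\[
\JJ^{[p^{e+e_0}]}I_e(I)\ \subseteq\ \JJ^{[p^{e+e_0}]}\cap\bigl(I^{[p^e]}:_R\JJ^{\infty}\bigr).
\]
It therefore remains to show that, for a suitable $e_0$ independent of $e$ and of $I$, one has $\JJ^{[p^{e+e_0}]}\cap\bigl(I^{[p^e]}:_R\JJ^{\infty}\bigr)\subseteq I^{[p^e]}$. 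Using $\JJ^{[p^{e+e_0}]}\subseteq\JJ^{p^{e+e_0}}$ and decomposing $I^{[p^e]}$ into primary components, this splits into (i) the components of $I^{[p^e]}$ whose radical does not contain $\JJ$, which are the components already captured by $(I^{[p^e]}:_R\JJ^\infty)$ (their primes lie in the regular locus, where Frobenius powers are transparent), and (ii) the finitely many components whose radical contains $\JJ$, into which a power of $\JJ$ must be pushed. Part (ii) is handled exactly as in the proof of \cite[Proposition~3.4 and Theorem~3.5]{HKV}: a uniform Artin--Rees bound for $\JJ$, a uniform Brian\c{c}on--Skoda bound, and Swanson's uniform primary decompositions of $\{I^n\}$ together bound, uniformly in $I$, the power of $\JJ$ needed to enter each such component; because that component is a component of $I^{[p^e]}$ rather than of $I$ — so its generators are $p^e$-th powers — the uniform constant produced by these bounds is multiplied by $p^e$, which is precisely what converts it into an exponent of the shape $p^{e+e_0}$. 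Setting $e_0=\max\{e_1,(\text{the constant just produced})\}$ gives all three conclusions simultaneously.

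The main obstacle is this last uniform containment, and in particular the bookkeeping in (ii). Crude approaches all fail: clearing denominators in a presentation of $F^e_*R_d$ as a free $R_d$-module produces an annihilating power of $\JJ$ growing with $\operatorname{rank}_R F^e_*R$, hence exponentially in $e$; iterating $I_e(I)\subseteq I_{e-1}(I_1(I))$ from Lemma~\ref{lemma Some properties of splitting ideals}(\ref{splitting ideals some comparisons 2}) together with a base case yields only $\bigl(\JJ^{[p^{e_0}]}\JJ^{[p^{e_0+1}]}\cdots\JJ^{[p^{e_0+e-1}]}\bigr)I_e(I)\subseteq I^{[p^e]}$, whose left ideal factor is not contained in $\JJ^{[p^{e+e_0}]}$; and a naive saturation bound fails because Swanson's primary‑decomposition constant is a priori not uniform over all ideals $I$. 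The work of the lemma — and the sense in which it is a ``modest improvement'' of \cite[Lemma~3.3]{HKAbelian} — is to run the uniform bounds of \cite{HunekeUniformBounds} with the ideal $\JJ^{[p^{e_0}]}$ in place of a single multiplier, so that the passage from $I$ to $I^{[p^e]}$ costs exactly a factor $p^e$ in the exponent.
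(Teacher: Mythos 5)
Your argument for the two ``multiplier'' assertions (rerunning the proofs of \cite[Proposition~3.4 and Theorem~3.5]{HKV}, using Theorem~\ref{Kunz's Theorems on F-finite rings} and Theorem~\ref{Uniform Theorems} to supply excellence, a closed singular locus, and the Uniform Artin--Rees and Brian\c{c}on--Skoda properties for an $F$-finite domain) matches the paper, which simply cites \cite[Lemma~3.3]{HKAbelian} for this part.

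For the ``moreover'' containment your route is genuinely different, and it has a real gap. Your first reduction is correct and clean: since $R_d$ is regular for every $0\neq d\in\JJ$, Lemma~\ref{lemma Some properties of splitting ideals}(\ref{splitting ideals and localization}) and (\ref{splitting ideals regular}) give $I_e(I)R_d=I^{[p^e]}R_d$, whence $I_e(I)\subseteq(I^{[p^e]}:_R\JJ^{\infty})$. But the step you flag as ``the bookkeeping in (ii)'' --- showing $\JJ^{[p^{e+e_0}]}\cap(I^{[p^e]}:_R\JJ^\infty)\subseteq I^{[p^e]}$ uniformly in $I$ and $e$ --- is not actually argued, and the sketch you give (``run the uniform bounds of \cite{HunekeUniformBounds} with $\JJ^{[p^{e_0}]}$ in place of a single multiplier'') is precisely the thing the paper warns cannot be navigated directly: controlling the primary components of $I^{[p^e]}$ uniformly is obstructed by the fact that $\bigcup_{e}\Ass(I^{[p^e]})$ can be infinite even in mild singularities (\cite{Katzman,SinghSwanson}, quoted in the introduction). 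Swanson's primary decomposition constant for $\{I^n\}$ gives you nothing for the family $\{I^{[p^e]}\}$, and the ``generators are $p^e$-th powers'' heuristic does not translate into a uniform bound on the depth of the $\fp$-primary components of $I^{[p^e]}$ at singular primes $\fp$.

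Moreover, the ``crude approach'' you dismiss --- clearing denominators in a free presentation of $F^e_*R_d$ --- is, in a sharpened form, exactly what the paper does, and the exponent does \emph{not} grow with $e$. The key technical input, from \cite[Proof of Proposition~3.4, top of p.~334]{HKV} and quoted in the paper's proof, is that for each of finitely many affine opens $\Spec(R_i)$ with $F_*R_{i,c}$ free there is a \emph{single} constant $t_i$ (independent of $e$) and free $R_i$-submodules $F_{e,i}\subseteq F^e_*R_i$ with $c^{t_i}F^e_*R_{i,c}\subseteq F_{e,i}$. Choosing $p^{e_0}\geq t_i$, one checks the desired containment after localizing at any $\fp\in\Ass(I^{[p^e]})$: for $x\notin I^{[p^e]}R_\fp$, one has $c^{p^{e_0}}F^e_*x\in F_{e,i,\fp}\setminus c^{p^{e_0}}IF_{e,i,\fp}$, a projection off the free module gives $\varphi\in\Hom_{R_\fp}(F^e_*R_\fp,R_\fp)$ with $\varphi(F^e_*x)\notin c^{p^{e_0}}IR_\fp$, hence $I_e(c^{p^{e_0}}IR_\fp)\subseteq I^{[p^e]}R_\fp$, and Lemma~\ref{lemma Some properties of splitting ideals}(\ref{splitting ideals multiplication containment}), (\ref{splitting ideals and localization}) convert this into $c^{p^{e+e_0}}I_e(I)R_\fp\subseteq I^{[p^e]}R_\fp$. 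This construction sidesteps primary decompositions of $I^{[p^e]}$ entirely; your plan needs either this ingredient or an actual proof of the intersection containment, which as written you do not have.
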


\begin{proof}
    The claim there exists $e_0\in\NN$ so that the nonzero elements of $\JJ^{[p^{e_0}]}$ are uniform symbolic multipliers and uniform Frobenius symbolic multipliers is the content of \cite[Lemma~3.3]{HKAbelian}. For the remaining claim, it suffices to show that if $c$ is a minimal generator of $\JJ$ then there exists $e_0\in\NN$ so that for all ideals $I\subseteq R$ and $e\in\NN$, $c^{p^{e+e_0}}I_e(I)\subseteq I^{[p^e]}$. If $0\not=c\in\JJ$ then $F_*R_c$ is a flat $R_c$-module, i.e., locally free. The spectrum $\Spec(R)$ is then covered by finitely many affine open sets $\Spec(R_i)$ so that if $R_{i,c}$ is the localization of $R_i$ with respect to the element $c$, then $F_*R_{i,c}$ a free $R_{i,c}$-module. By standard arguments, see \cite[Proof of Proposition~3.4 top of page 334]{HKV}, there exists a constant $t_i\in\NN$ so that for every $e\in\NN$ there exists a free $R_{i}$-module $F_{e,i}\subseteq F^e_*R_{i}$ and $c^{t_i}F^e_*R_{i,c}\subseteq F_{e,i}$. Choose a constant $e_0$ so that $p^{e_0}\geq t_i$ for all $i$. We claim that for every ideal $I\subseteq R$ and $e\in\NN$, $c^{p^{e+e_0}}I_e(I)\subseteq I^{[p^e]}$. 

    For a fixed ideal $I\subseteq R$ and $e\in\NN$, the containment $c^{p^{e+e_0}}I_e(I)\subseteq I^{[p^e]}$ can be checked after localization at an associated prime $\fp$ of $I^{[p^e]}$. The affine open sets $\Spec(R_i)$ cover $\Spec(R)$, therefore $\fp\in \Spec(R_i)$ for some $i$. If $x\in R_\fp\setminus I^{[p^e]}R_\fp$ then $F^e_*x\in F^e_*R_\fp\setminus IF^e_*R_{\fp}$. If $F_{e,i,\fp}$ is the localization of $F_{e,i}$ with respect to $\fp$, then $F^e_*x\not\in IF_{e,i,\fp}$. Therefore $c^{p^{e_0}}F^e_*x\in F_{e,i,\fp}\setminus c^{p^{e_0}}IF_{e,i,\fp}$. Therefore there exists a projection map $\pi:F_{e,i,\fp}\to R_\fp$ so that $\pi(c^{p^{e_0}}F^e_*x)\not\in c^{p^{e_0}}I R_\fp$. Let $\varphi$ be the composition of $R_\fp$-linear maps
    \[
    \varphi: F^e_*R_\fp \xrightarrow{\cdot c^{p^{e_0}}} F_{e,i,\fp}\xrightarrow{\pi} R_\fp,
    \]
    then $\varphi(F^e_*r) = \pi(c^{p^{e_0}}F^e_*x)\not\in c^{p^{e_0}}I R_\fp$. Therefore there is an inclusion of ideals
    \[
    I_e(c^{p^{e_0}}IR_\fp)\subseteq I^{[p^e]}R_\fp.
    \]
    By Lemma~\ref{lemma Some properties of splitting ideals} (\ref{splitting ideals multiplication containment}),
    \[
    c^{p^{e+e_0}}I_e(IR_\fp)\subseteq I_e(c^{p^{e_0}}IR_\fp)\subseteq I^{[p^e]}R_\fp.
    \]
    By Lemma~\ref{lemma Some properties of splitting ideals}~(\ref{splitting ideals and localization}),
    \[
    c^{p^{e+e_0}}I_e(I)R_\fp=c^{p^{e+e_0}}I_e(IR_\fp)\subseteq I^{[p^e]}R_\fp.
    \]
\end{proof}

It remains an open problem whether a sufficiently large class of singular domains not containing a field admits a uniform symbolic multiplier. Significant advancements in the Uniform Symbolic Topology Property for equicharacteristic singular rings often rely on the existence of uniform symbolic multipliers, the Uniform Artin Theorem, and the Uniform Brian\c{c}on-Skoda Theorem, cf. \cite{HKV, HKAbelian, HKHypersurface, PolstraZariskiNagata}. Therefore, progressing the Uniform Symbolic Topology Property for singular rings of mixed characteristic may be best suited to rings that are essentially of finite type over an excellent local ring or over $\mathbb{Z}$, as such rings are known to satisfy the Uniform Artin-Rees and Uniform Brian\c{c}on-Skoda Properties by \cite[Theorems~4.12 and 4.13]{HunekeUniformBounds}.

\begin{question}\label{question symbolic multipliers exist}
    Let $R$ be a domain essentially of finite type over an excellent local ring or $\ZZ$. 
    \begin{enumerate}
        \item\label{usm exist} Does $R$ admit a uniform symbolic multiplier?
        \item\label{usm jacobian ideal} If $0\not =c\in R$ and $R_c$ is non-singular, is a power of $c$ a uniform symbolic multiplier?
    \end{enumerate} 
\end{question}

If Question~\ref{question symbolic multipliers exist}~(\ref{usm jacobian ideal}) has a positive answer, then the Uniform Symbolic Topology Property of non-singular rings, quasi-compactness of $\Spec(R)$, and the methods of \cite{HKV} are sufficient to assert that if $R$ is reduced, essentially of finite type over $\ZZ$, 
 and $R_\fp$ is non-singular for all non-maximal prime ideals $\fp\in \Spec(R)$, then $R$ enjoys the Uniform Symbolic Topology Property.

\section{On the Powers of Ideals, Frobenius Powers of Ideals, and Splitting Ideals}\label{Section Groundwork on powers and splitting ideals}

Theorem~\ref{Theorem Splitting ideals and powers of ideals} applies to all $F$-finite domains and primarily follows from Lemma~\ref{lemma uniform Frobenius multipliers}, the Uniform Artin-Rees theorem, and Uniform Brian\c{c}on-Skoda theorem. The theorem contributes to our progress on the Uniform Symbolic Topology Problem in a manner indicated by Huneke and Katz's Bootstrapping Theorem, \cite[Theorem~3.5]{HKAbelian}.

\begin{theorem}
    \label{Theorem Splitting ideals and powers of ideals}
    Let $R$ be an $F$-finite domain of prime characteristic $p>0$. There exists a constant $C$ so that for all ideals $I\subseteq R$ and $e\in\NN$,
    \[
    I_e(I^C)\subseteq I^{[p^e]}.
    \]
\end{theorem}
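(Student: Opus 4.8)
The plan is to use Lemma~\ref{lemma uniform Frobenius multipliers} to trade the splitting ideal for a membership statement in an ordinary power of $I$, and then to strip the resulting Jacobian multiplier off by a Rees valuation estimate together with the Uniform Brian\c{c}on-Skoda Property. Let $\JJ\subseteq R$ be the radical ideal defining the singular locus of $\Spec(R)$. By Lemma~\ref{lemma uniform Frobenius multipliers}, fix $e_0\in\NN$ such that $\JJ^{[p^{e+e_0}]}I_e(J)\subseteq J^{[p^e]}$ for every ideal $J\subseteq R$ and every $e\in\NN$, and such that every nonzero element of $\JJ^{[p^{e_0}]}$ is a uniform symbolic multiplier of $R$; fix one such nonzero $z$. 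Let $B$ be a Uniform Brian\c{c}on-Skoda Bound of $R$, available by Theorem~\ref{Uniform Theorems}~(2) since a domain is reduced. The constant produced at the end will be $C=D+B+1$, where $D$ is the linear constant of the third step.

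First I would reduce to a single multiplier. Fix $I\subseteq R$ and $e\in\NN$, and let $x\in I_e(I^C)$. Applying Lemma~\ref{lemma uniform Frobenius multipliers} with $J=I^C$, and using $(I^{[p^e]})^C=(I^C)^{[p^e]}$, gives $\JJ^{[p^{e+e_0}]}x\subseteq(I^{[p^e]})^C$; since $z^{p^e}\in(\JJ^{[p^{e_0}]})^{[p^e]}=\JJ^{[p^{e+e_0}]}$ we obtain $z^{p^e}x\in(I^{[p^e]})^C$, so it is enough to deduce $x\in I^{[p^e]}$. Now pass to integral closures: a Frobenius power and the corresponding ordinary power of an ideal have the same integral closure, so $\overline{(I^{[p^e]})^C}=\overline{(I^{p^e})^C}=\overline{I^{Cp^e}}$, and the Rees valuations of $I^{[p^e]}$ coincide with those of $I$, with $v(I^{[p^e]})=p^e\,v(I)$. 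Hence $z^{p^e}x\in\overline{I^{Cp^e}}$, and Rees' valuative criterion yields $p^e\,v(z)+v(x)\ge Cp^e\,v(I)$ for every Rees valuation $v$ of $I$.

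The decisive step is a uniform linear comparison of valuations: there is a constant $D$, depending only on $R$ and $z$, with $v(z)\le D\,v(I)$ for every ideal $I\subseteq R$ and every Rees valuation $v$ of $I$. This is the deeper content built into Lemma~\ref{lemma uniform Frobenius multipliers} through the work of \cite{HKV}; it is the kind of estimate extracted, via the Rees valuations of $I$, from the fact that $z$ is a uniform symbolic multiplier, and alternatively it can be produced by iterated use of the Uniform Artin-Rees Property for the principal ideal $(z)$. Granting it, the inequality of the previous step becomes $v(x)\ge(C-D)p^e\,v(I)=(B+1)\,v(I^{[p^e]})$ for every Rees valuation $v$ of $I^{[p^e]}$, so $x\in\overline{(I^{[p^e]})^{B+1}}$. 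Applying the Uniform Brian\c{c}on-Skoda Property to the ideal $I^{[p^e]}$ with exponent $1$ gives $\overline{(I^{[p^e]})^{B+1}}\subseteq I^{[p^e]}$, whence $x\in I^{[p^e]}$. As $x\in I_e(I^C)$ was arbitrary, $I_e(I^C)\subseteq I^{[p^e]}$, and the theorem follows.

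The main obstacle is exactly this uniform linear comparison: a single constant $D$ must serve for every ideal $I$, even though the divisorial valuations arising as Rees valuations of the various ideals are unboundedly complicated; this is the same phenomenon that makes uniform symbolic multipliers hard to produce. It is worth noting that on the regular locus there is nothing to prove, since for $\fp\notin V(\JJ)$ Lemma~\ref{lemma Some properties of splitting ideals}~(\ref{splitting ideals and localization}) and~(\ref{splitting ideals regular}) give $I_e(I^C)R_\fp=(I^C)^{[p^e]}R_\fp\subseteq I^{[p^e]}R_\fp$; the whole difficulty is to control the behaviour along $V(\JJ)$, which is what Lemma~\ref{lemma uniform Frobenius multipliers} and the two Uniform Theorems are imported to handle.
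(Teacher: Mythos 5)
Your outline follows the paper for steps 1--4, but step 5 is a genuine gap that cannot be filled with the tools you cite. The uniform valuation comparison you assert---that there is a single constant $D$ with $v(z)\le D\,v(I)$ for \emph{every} ideal $I$ and \emph{every} Rees valuation $v$ of $I$---does not follow from Lemma~\ref{lemma uniform Frobenius multipliers} (which only furnishes the ideal-level multiplier containment $z^{p^e}I_e(\fa)\subseteq\fa^{[p^e]}$) nor from iterating the Uniform Artin--Rees Property for $(z)$. Iterated Artin--Rees gives $(I^{n+At}:_R z^t)\subseteq I^n$ for all $n,t$, which is a statement about membership in honest powers; once you pass to integral closures the colon information degrades, and combining Artin--Rees with Brian\c{c}on--Skoda at best shows that for each $I$ \emph{some} Rees valuation satisfies $v(z)\le A\,v(I)$, not all of them, whereas your deduction at the end of the paragraph requires the inequality for every Rees valuation of $I$. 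You correctly flag this as "the main obstacle," but an obstacle acknowledged is still an obstacle; as written the proof is incomplete.

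The paper sidesteps the issue entirely by stripping off $c^{p^e}$ \emph{before} passing to integral closure. Starting from $c^{p^e}I_e(I^{d+A+B})\subseteq (I^{d+A+B})^{[p^e]}\subseteq I^{(d+A+B)p^e}$, one applies the iterated Uniform Artin--Rees containment $(I^{n+Ap^e}:_R c^{p^e})\subseteq I^n$ directly to obtain $I_e(I^{d+A+B})\subseteq I^{(d+B)p^e}$, a containment in an \emph{ordinary} power. Only then does one invoke reductions and Brian\c{c}on--Skoda: after adjoining a variable to guarantee infinite residue fields (another small point your argument omits), $I$ admits a $d$-generated reduction $J$ at each relevant localization, so $I^{(d+B)p^e}\subseteq\overline{I^{dp^e+B}}=\overline{J^{dp^e+B}}\subseteq J^{dp^e}\subseteq J^{[p^e]}\subseteq I^{[p^e]}$, the penultimate inclusion by pigeonhole. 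The moral: keeping the argument at the level of ideals until the very last step avoids needing any uniform Izumi-type valuation bound, which is exactly the piece you could not supply.
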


\begin{proof}
    By Lemma~\ref{lemma Some properties of splitting ideals} (\ref{splitting ideals adjoin variable}) we can adjoin a variable $T$ to $R$ and only consider ideals $I\subseteq R$ so that for all $e\in\NN$ and for all $\fp\in \Ass(I^{[p^e]})$, $R_\fp$ has infinite residue field, cf. \cite[Proof of Lemma~2.4 (b)]{HHComparison}. Suppose that $R$ has Krull dimension $d$. Let $0\not=c\in R$ be an element so that for all ideals $I\subseteq R$ and $e\in\NN$, $c^{p^e}I_e(I)\subseteq I^{[p^e]}$, see Lemma~\ref{lemma uniform Frobenius multipliers}. Let $A$ be a Uniform Artin-Rees bound of $(c)\subseteq R$. For all ideals $I\subseteq R$ and $n\in\NN$, $(c)\cap I^{n+A} = c(I^{n+A}:_Rc)\subseteq cI^n$. Canceling $c$ and an inductive argument implies that for all ideals $I\subseteq R$ and $n,t\in \NN$,
    \begin{align}\label{equation consequence of uniform artin rees}
        (I^{n+At}:_Rc^t)\subseteq I^n.
    \end{align}

    Let $B$ be a Uniform Brian\c{c}on-Skoda Bound of $R$. Then for all ideals $I\subseteq R$ and $e\in\NN$, $c^{p^e}I_e(I^{d+A+B})\subseteq (I^{d+A+B})^{[p^e]}$. Therefore
    \begin{align*}
        I_e(I^{d+A+B}) &\subseteq ((I^{d+A+B})^{[p^e]}:_Rc^{p^e})\\
        &\subseteq ((I^{d+A+B})^{p^e}:_Rc^{p^e})\\
        &\subseteq I^{(d+B)p^e} \mbox{ {\footnotesize by (\ref{equation consequence of uniform artin rees}).}}
    \end{align*}
    
    For all $\fp\in\Ass(I^{[p^e]})$, the residue field of $R_\fp$ is infinite. Therefore $I$ admits a minimal reduction by at most $d$ elements at the localization of an associated prime of $I^{[p^e]}$. Integral closure of ideals and powers of ideals commute with localization. Therefore the constant $B$ is a Uniform Brian\c{c}on-Skoda Bound of all localizations of $R$. Therefore
    \[
    I_e(I^{d+A+B})\subseteq I^{(d+B)p^e}\subseteq \overline{I^{dp^e+B}}\subseteq I^{[p^e]}.
    \]
    The constant $C=d+A+B$ was independent of the choice of ideal $I\subseteq R$.
\end{proof}

Let $(R,\fm,k)$ be an $F$-finite domain of prime characteristic $p>0$ and Krull dimension $d>0$. Let $I\subseteq R$ be an $\fm$-primary ideal. In \cite[Section~2]{PolstraSmirnovEquimultiplicity} the authors define the \emph{$F$-signature of $I$} as $s(I)=\lim_{e\to\infty}\frac{\ell(R/I_e(I))}{p^{ed}}$. The authors prove the following about the $F$-signature of an $\fm$-primary ideal:
\begin{itemize}
    \item The limit $s(I)$ exists;
    \item $\ell(R/I_e(I)) = s(I)p^{ed}+O(p^{e(d-1)})$;
    \item If $s(I) = \lim_{e\to\infty}\frac{e_{HK}(I_e(I))}{p^{ed}}$ where $e_{HK}(I_e(I))$ is the Hilbert-Kunz multiplicity of the $\fm$-primary ideal $I\subseteq R$.
\end{itemize}

Theorem~\ref{Theorem Splitting ideals and powers of ideals} then implies positivity of the $F$-signature of all $\fm$-primary ideals contained in a deep enough power of the maximal ideal.

\begin{corollary}
    \label{cor: positivity of F-signature of deep enough ideals}
    Let $(R,\fm,k)$ be an $F$-finite local domain. There exists $C\in\NN$ so that for all $\fm$-primary ideals $I\subseteq \fm^C$, the $F$-signature of $I$ is positive.
\end{corollary}

\begin{proof}
    By Theorem~\ref{Theorem Splitting ideals and powers of ideals}, only applied to the maximal ideal $\fm$, there exists $C\in\NN$ so that $I_e(\fm^C)\subseteq \fm^{[p^e]}$. If $I\subseteq \fm^{C}$ then $I_e(I)\subseteq I_e(\fm^C)\subseteq \fm^{[p^e]}$. If $d$ is the Krull dimension of $R$, and $I$ is $\fm$-primary, then
    \[
    s(I)=\lim_{e\to \infty}\frac{\ell(R/I_e(I))}{p^{ed}}\geq \lim_{e\to \infty} \frac{\ell(R/\fm^{{[p^e]}})}{p^{ed}} = e_{HK}(R)\geq 1.
    \]
\end{proof}

If $R$ is a Noetherian ring of prime characteristic $p>0$ and $I\subseteq R$ is an ideal, then $I^{[p^e]}\subseteq I^{p^e}$ and $I^{\left([p^e]\right)}\subseteq I^{(p^e)}$. Standard arguments involving reductions, cf. \cite[Proof of Lemma~2.4 (b)]{HHComparison}, and the Uniform Brian\c{c}on-Skoda Property, we can observe a uniform linear adjustment of the symbolic powers of an ideal are contained in the Frobenius symbolic power of an ideal.

\begin{lemma}
    \label{lemma uniform symbolic Frobenius power multipliers}
    Let $R$ be an $F$-finite domain and of prime characteristic $p>0$. There exists a constant $C$, depending only on the dimension of $R$ and a Uniform Brian\c{c}on-Skoda bound of $R$, so that for all ideals $I\subseteq R$ and $e\in\NN$, $I^{(Cp^e)}\subseteq I^{\left([p^e]\right)}$.
\end{lemma}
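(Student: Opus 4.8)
The plan is to localize the desired containment at the associated primes of $I$, where it becomes a purely local statement, and then combine the Uniform Briançon--Skoda bound of $R$ with the elementary fact that $\fa^{p^e}$ is always integral over its Frobenius power $\fa^{[p^e]}$. Since $R$ is an $F$-finite domain it is reduced, excellent, and of finite Krull dimension, so by Theorem~\ref{Uniform Theorems}(2) it admits a Uniform Briançon--Skoda bound $B$; moreover, because integral closures of ideals and powers of ideals commute with localization, the same constant $B$ is a Uniform Briançon--Skoda bound for every localization $R_\fp$ (this is exactly the observation used in the proof of Theorem~\ref{Theorem Splitting ideals and powers of ideals}). Set $C := \dim R + B$. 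If $\dim R = 0$ then $R$ is a field and the statement is trivial, so assume $\dim R \ge 1$; in particular $C - B = \dim R \ge 1$.

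Next I would carry out the reduction. Fix a nonzero ideal $I \subseteq R$ and $e \in \NN$, let $W$ be the complement of the union of the associated primes of $I$, and recall that $I^{([p^e])} = I^{[p^e]}R_W \cap R$ is a contraction from the semilocal ring $R_W$; hence it suffices to verify $I^{(Cp^e)}R_\fp \subseteq I^{[p^e]}R_\fp$ for every $\fp \in \Ass(I)$. For such $\fp$ one has $\fp \cap W = \emptyset$, so $R_\fp$ is a localization of $R_W$ and therefore $I^{(Cp^e)}R_\fp = I^{Cp^e}R_\fp$. Writing $A := R_\fp$ and $\fa := IR_\fp$, the problem is reduced to proving $\fa^{Cp^e} \subseteq \fa^{[p^e]}$ in the local ring $A$.

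For the local statement, the key input is: for any ideal $\fa = (f_1,\ldots,f_s)$ in any commutative ring, $\fa^{p^e} \subseteq \overline{\fa^{[p^e]}}$. Indeed, a generator of $\fa^{p^e}$ has the shape $g = f_1^{a_1}\cdots f_s^{a_s}$ with $\sum_i a_i = p^e$, and then $g^{p^e} = (f_1^{p^e})^{a_1}\cdots(f_s^{p^e})^{a_s}$ is a product of $\sum_i a_i = p^e$ generators of $\fa^{[p^e]}$, so $g^{p^e} \in (\fa^{[p^e]})^{p^e}$ and $g$ is a root of the monic polynomial $X^{p^e} - g^{p^e}$, which exhibits $g$ as integral over $\fa^{[p^e]}$. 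Granting this, together with the standard inclusion $\overline{J}^{\,C} \subseteq \overline{J^{C}}$ and the Uniform Briançon--Skoda bound of $A$, I would compute
\[
\fa^{Cp^e} = \bigl(\fa^{p^e}\bigr)^{C} \subseteq \bigl(\overline{\fa^{[p^e]}}\,\bigr)^{C} \subseteq \overline{\bigl(\fa^{[p^e]}\bigr)^{C}} = \overline{\bigl(\fa^{[p^e]}\bigr)^{(C-B)+B}} \subseteq \bigl(\fa^{[p^e]}\bigr)^{C-B} \subseteq \fa^{[p^e]},
\]
the last two steps using $C - B = \dim R \ge 1$. This gives $\fa^{Cp^e} \subseteq \fa^{[p^e]}$, hence $I^{(Cp^e)}R_\fp \subseteq I^{[p^e]}R_\fp$ for every $\fp \in \Ass(I)$, hence $I^{(Cp^e)} \subseteq I^{([p^e])}$; and $C = \dim R + B$ depends only on $\dim R$ and the Uniform Briançon--Skoda bound $B$, as required.

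I do not expect a genuine obstacle here: the argument is an assembly of known ingredients. The only points requiring a little care are the bookkeeping around contractions from $R_W$ (so that the containment may be checked after localizing at the associated primes of $I$, including the identity $I^{(Cp^e)}R_\fp = I^{Cp^e}R_\fp$) and the transfer of the Uniform Briançon--Skoda bound from $R$ to its localizations — both of which are routine and appear in essentially this form in the proof of Theorem~\ref{Theorem Splitting ideals and powers of ideals} and in \cite{HHComparison}. An alternative route, closer to \cite[Proof of Lemma~2.4(b)]{HHComparison}, is to adjoin a variable to make the relevant residue fields infinite, replace $\fa$ by a minimal reduction generated by at most $\dim R$ elements, and use the evident containment $\fa^{(\dim R)p^e} \subseteq \fa^{[p^e]}$ followed by Uniform Briançon--Skoda applied to $\fa^{p^e}$; the version above sidesteps the variable adjunction.
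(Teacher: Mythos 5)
Your proof is correct and takes a genuinely different route from the paper's. The paper first adjoins a variable $T$ to $R$ so that, after localizing at an associated prime $\fp$ of $I$, the local ring has infinite residue field; it then picks a minimal reduction $J$ of $IR_\fp$ generated by at most $d = \dim R_\fp$ elements, and closes the argument with the pigeonhole containment $J^{dp^e} \subseteq J^{[p^e]}$ together with one application of the Uniform Brian\c{c}on--Skoda bound. Your argument replaces both the variable adjunction and the use of minimal reductions by the elementary observation that for \emph{any} finite generating set, $\fa^{p^e} \subseteq \overline{\fa^{[p^e]}}$: a monomial generator $g = f_1^{a_1}\cdots f_s^{a_s}$ of $\fa^{p^e}$ satisfies $g^{p^e} = (f_1^{p^e})^{a_1}\cdots(f_s^{p^e})^{a_s} \in (\fa^{[p^e]})^{p^e}$, giving an explicit monic integral equation for $g$ over $\fa^{[p^e]}$. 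The rest is routine: $\fa^{Cp^e} \subseteq (\overline{\fa^{[p^e]}})^C \subseteq \overline{(\fa^{[p^e]})^C}$, and Uniform Brian\c{c}on--Skoda (inherited by every localization since integral closure and powers commute with localization, exactly as in the proof of Theorem~\ref{Theorem Splitting ideals and powers of ideals}) brings this into $\fa^{[p^e]}$. This is cleaner; as a bonus, your route actually yields $C = B + 1$ with \emph{no} dependence on $\dim R$ --- you chose $C = \dim R + B$ only to match the statement, but the dimension plays no role in your argument. The minor inaccuracy in your closing remark (the paper's version applies Brian\c{c}on--Skoda to the reduction $J$, not to $\fa^{p^e}$) is cosmetic and does not affect your own proof.
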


\begin{proof}
    By passing to a polynomial extension $R[T]$, we can assume that for each $\fp\in\Ass(I)$, the residue field of $R_\fp$ is infinite. Let $B$ be a Uniform Brian\c{c}on-Skoda Bound of $R$. After localization at an associated prime of $I$, and recalling that an $F$-finite domain has finite Krull dimension, we may assume $(R,\fm,k)$ is local of Krull dimension $d$ with infinite residue field and show that for all ideals $I\subseteq R$ and $e\in\NN$, $I^{(d+B)p^e}\subseteq I^{[p^e]}$. Every ideal has a reduction by at most $d$ elements, hence there is an ideal $J\subseteq I$ that is at most $d$-generated so that $I^{(d+B)p^e}\subseteq I^{dp^e+B}\subseteq \overline{J^{dp^e+B}}\subseteq J^{dp^e}\subseteq J^{[p^e]}\subseteq I^{[p^e]}$.
\end{proof}

Our next lemma is analogous to \cite[Main Theorem]{SwansonLinear} and specific to splitting ideals and Frobenius symbolic powers of ideals in rings of prime characteristic.

\begin{lemma}
    \label{lemma standard linear comparision of splitting ideals and Frobenius Powers}
    Let $R$ be an $F$-finite domain of prime characteristic $p>0$. Let $I\subseteq R$ an ideal, $W$ the complement of the union of the associated primes of $I$, and assume that the adic and symbolic topologies of $I$ are equivalent.
    \begin{enumerate}
        \item\label{standard lin comparison non F reg} There exists a constant $e_0\in\NN$ so that for all $e\in\NN$, $I^{\left([p^{e+e_0}]\right)}\subseteq I^{[p^e]}$.
        \item\label{standard lin comparison F reg} If $R_W$ is strongly $F$-regular, then there exists an $e_1\in\NN$ so that for all $e\in\NN$, $I_{e+e_1}(I)\subseteq I^{[p^e]}$.
    \end{enumerate} 
\end{lemma}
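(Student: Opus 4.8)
The plan is to reduce both statements to the uniform comparisons already available, the principal external inputs being Swanson's linear equivalence theorem \cite[Main Result]{SwansonLinear} and Theorem~\ref{Theorem Splitting ideals and powers of ideals}. For part~(\ref{standard lin comparison non F reg}), fix a generating set $I=(a_1,\dots,a_N)$. A pigeonhole argument (a monomial $a_1^{c_1}\cdots a_N^{c_N}$ of total degree $N(p^e-1)+1$ must have some $c_i\ge p^e$) gives $I^{N(p^e-1)+1}\subseteq I^{[p^e]}$ for every $e$. Since the adic and symbolic topologies of $I$ are equivalent, Swanson's theorem produces a constant $C_S$ with $I^{(C_Sn)}\subseteq I^n$ for all $n$. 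I would then choose $e_0$ with $p^{e_0}\ge C_SN$; using $I^{[p^{e+e_0}]}\subseteq I^{p^{e+e_0}}$ and passing to the $W$-saturation,
\[
I^{\left([p^{e+e_0}]\right)}\subseteq I^{(p^{e+e_0})}\subseteq I^{(C_S(N(p^e-1)+1))}\subseteq I^{N(p^e-1)+1}\subseteq I^{[p^e]},
\]
where the middle inclusion uses $p^{e+e_0}=p^{e_0}p^e\ge C_SNp^e\ge C_S(N(p^e-1)+1)$ and monotonicity of symbolic powers in the exponent, and the third inclusion is Swanson's estimate with $n=N(p^e-1)+1$. I expect this part to be routine; note that $e_0$ is permitted to depend on $I$, which is what makes the ``number of generators'' issue harmless.

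For part~(\ref{standard lin comparison F reg}), let $C$ be the uniform constant of Theorem~\ref{Theorem Splitting ideals and powers of ideals}, so $I_e(I^C)\subseteq I^{[p^e]}$ for all $e$. By Lemma~\ref{lemma Some properties of splitting ideals}(\ref{splitting ideals some comparisons 2}) and (\ref{splitting ideal containment}) it suffices to produce a single $e_1$ with $I_{e_1}(I)\subseteq I^C$, since then $I_{e+e_1}(I)\subseteq I_e(I_{e_1}(I))\subseteq I_e(I^C)\subseteq I^{[p^e]}$ for all $e$. The strong $F$-regularity of $R_W$ enters first to show $\bigcap_e I_e(I)=0$: for $0\ne x\in R$ there are $e$ and $\varphi\in\Hom_R(F^e_*R,R)$ with $\varphi(F^e_*x)$ a unit of $R_W$, hence $\varphi(F^e_*x)\notin I$ because $I$ is contained in the union of its associated primes, so $x\notin I_e(I)$. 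By Lemma~\ref{lemma Some properties of splitting ideals}(\ref{splitting ideals some comparisons 3}) the chain $\{I_e(I)\}$ is descending, and by (\ref{associated primes of splitting ideals}) each $I_e(I)$ has associated primes exactly $\Ass(I)$, so each is $W$-saturated.

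To turn $\bigcap_e I_e(I)=0$ into the containment $I_{e_1}(I)\subseteq I^C$, write $I^C=Q_1\cap\cdots\cap Q_s$ with $Q_j$ being $\fp_j$-primary; then $I_{e_1}(I)\subseteq I^C$ amounts to $I_{e_1}(IR_{\fp_j})\subseteq Q_jR_{\fp_j}$ for each $j$, using Lemma~\ref{lemma Some properties of splitting ideals}(\ref{splitting ideals and localization}). By the characterization of equivalence of symbolic and adic topologies (\cite{McAdam,Schenzel,McAdamRatliffSymbolic}), every $\fp_j\in\bigcup_n\Ass(R/I^n)$ lies below an associated prime of $I$, so each $R_{\fp_j}$ is a localization of $R_W$ and is strongly $F$-regular and local; moreover $Q_jR_{\fp_j}$ contains a power of $\fp_jR_{\fp_j}$. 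When $\fp_j$ is minimal over $I$ the ideal $IR_{\fp_j}$ is $\fp_jR_{\fp_j}$-primary, so $I_e(IR_{\fp_j})$ is primary to the maximal ideal, and Chevalley's lemma (applied in the completion, exactly as in the commented remark following Theorem~\ref{Theorem Splitting ideals and powers of ideals}) supplies the needed index. For the embedded $\fp_j$ I would peel off the corresponding primary component of $I$ via the colon construction used in the proof of Lemma~\ref{lemma Some properties of splitting ideals}(\ref{associated primes of splitting ideals}), use that $I_e$ commutes with finite intersections by (\ref{splitting ideals and intersection}), and induct on $\dim R$ together with the number of associated primes of $I$; part~(\ref{standard lin comparison non F reg}) and the Frobenius symbolic powers $I^{\left([p^e]\right)}$ enter at this stage precisely because the formation of $I^{[p^e]}$ does not commute with intersections of ideals in a singular ring, so the estimates must be routed through $I^{\left([p^e]\right)}$ before descending to $I^{[p^e]}$.

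The hard part will be exactly this last step — upgrading the Krull-type statement $\bigcap_e I_e(I)=0$ to a \emph{definite} membership $I_{e_1}(I)\subseteq I^C$. Over a local ring in which $I$ is primary to the maximal ideal this is Chevalley's lemma verbatim, but in the general non-local, non-primary situation one must localize simultaneously at the finitely many associated primes of $I$ (which, thanks to the equivalence of topologies, are the only primes one ever sees), keep a uniform bound valid across all of them, pass to completions where primality is needed, and carry the induction through while compensating for the failure of $(-)^{[p^e]}$ to commute with intersections. This bookkeeping, rather than any single inequality, is the crux of the proof.
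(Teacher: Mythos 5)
Your part~(\ref{standard lin comparison non F reg}) is the paper's argument verbatim: the pigeonhole bound $I^{N(p^e-1)+1}\subseteq I^{[p^e]}$ supplies the constant $D$ with $I^{Dp^e}\subseteq I^{[p^e]}$, and the rest is exactly the chain through $I^{(CDp^e)}$ via Swanson. No issues there.

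For part~(\ref{standard lin comparison F reg}) your high-level plan --- produce one index $e_1$ with $I_{e_1}(I)\subseteq I^C$, then chain via Lemma~\ref{lemma Some properties of splitting ideals}(\ref{splitting ideals some comparisons 2}), (\ref{splitting ideal containment}) and Theorem~\ref{Theorem Splitting ideals and powers of ideals}, with $\bigcap_e I_e(I)=0$ coming from strong $F$-regularity of $R_W$ --- is the right skeleton, and your verification that the intersection vanishes is essentially correct. But the proposal stalls exactly where you say it does: you never actually derive the membership $I_{e_1}(I)\subseteq I^C$, and the route you sketch (primary decomposition of $I^C$, Chevalley locally at each minimal prime, then an unfinished induction ``peeling off'' embedded components) chases the wrong target. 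Showing $I_{e_1}(I)\subseteq I^C$ in $R$ forces you to control the associated primes of $I^C$, which in a singular ring can escape $\Ass(I)$, and that is precisely what makes your embedded-prime step collapse.

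The paper's proof sidesteps this in a single move you do not make: it first observes (via part~(\ref{standard lin comparison non F reg})) that it suffices to prove the weaker containment $I_{e+e_2}(I)\subseteq I^{([p^e])}$, and that \emph{this} containment can be checked entirely after localizing at $W$, because $I^{([p^e])}$ is $W$-saturated by definition while $I_{e+e_2}(I)$ has $\Ass(I_{e+e_2}(I))=\Ass(I)$ by Lemma~\ref{lemma Some properties of splitting ideals}(\ref{associated primes of splitting ideals}). Working in the semi-local ring $R_W$ one then takes $C$ from Theorem~\ref{Theorem Splitting ideals and powers of ideals} applied to $R_W$, uses $\bigcap_e I_e(I)R_W=0$ together with Chevalley's Lemma in $R_W$ to produce $e_2$ with $I_{e_2}(I)R_W\subseteq I^CR_W$, and chains
\[
I_{e+e_2}(I)R_W\subseteq I_e(I_{e_2}(I))R_W\subseteq I_e(I^C)R_W\subseteq I^{[p^e]}R_W.
\]
You touch on both ingredients of this reduction --- you note that the estimates must be routed through $I^{([p^e])}$, and you note that one should ``localize simultaneously at the finitely many associated primes of $I$'' --- but you never combine them into the single observation that the whole problem lives in $R_W$, where Chevalley applies in one shot and no case division between minimal and embedded primes is needed. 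That observation, plus dropping the attempt to prove $I_{e_1}(I)\subseteq I^C$ in $R$ itself, is what closes your gap.
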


\begin{proof}
    For all $e,e_0\in \NN$, there is a containment of the Frobenius symbolic power and symbolic powers $I^{\left([p^{e+e_0}]\right)} \subseteq I^{(p^{e+e_0})}$. By \cite[Main Result]{SwansonLinear} there exists $C\in\NN$ so that for all $n\in\NN$, $I^{(Cn)}\subseteq I^{n}$. Choose a constant $D$ so that $I^{Dp^e}\subseteq I^{[p^e]}$ for all $e\in\NN$ and let $e_0\in \NN$ be so that $p^{e_0}\geq CD$. Then for all $e\in\NN$,
    \[
    I^{\left([p^{e+e_0}]\right)} \subseteq I^{(CDp^e)}\subseteq I^{Dp^e}\subseteq I^{[p^e]}.
    \]

    Now suppose that $R_W$ is strongly $F$-regular. By the above, it suffices to find a constant $e_2$ so that $I_{e+e_2}(I)\subseteq I^{\left([p^e]\right)}$ for all $e\in\NN$. The associated primes of $I_{e}(I)$ are the associated primes of $I$ by Lemma~\ref{lemma Some properties of splitting ideals} (\ref{associated primes of splitting ideals}). We therefore can determine the desired containment property in the localization $R_W$. Let $C$ be as in Theorem~\ref{Theorem Splitting ideals and powers of ideals} with respect to the localized ring $R_W$. The localized ring $R_W$ is strongly $F$-regular, hence $\bigcap_{e\in\NN} I_{e}(I)R_W= 0$. By Chevalley's Lemma, \cite[Lemma~7]{Chevalley}, there exists an $e_2\in\NN$ so that $I_{e_2}(I)R_W\subseteq I^CR_W$. By Lemma~\ref{lemma Some properties of splitting ideals} (\ref{splitting ideal containment}) and (\ref{splitting ideals some comparisons 2}), for every $e\in\NN$ there is a containment of ideals
    \[
    I_{e+e_2}(I)R_W\subseteq I_e(I_{e_2}(I))R_W\subseteq I_e(I^C)R_W\subseteq I^{[p^e]}R_W.
    \]
\end{proof}


If $\fq$ is a prime ideal of an $F$-finite domain $R$ belonging to the non-singular locus of $\Spec(R)$, then there exists an element $c\in R\setminus \fq$ so that for all ideals $I\subseteq R$ and $e\in\NN$, $c^{p^e}I_e(I)\subseteq I^{[p^e]}$, see Lemma~\ref{lemma uniform Frobenius multipliers}. The next lemma will serve as a substitute to this phenomena for a prime ideal $\fq$ belonging to the strongly $F$-regular locus of $\Spec(R)$. The below lemma is the fundamental blockade from extending the main theorems of this article from isolated non-strongly $F$-regular singularities to all isolated non-normal $F$-finite domains. Theorem~\ref{Theorem Splitting ideals and powers of ideals} and Lemma~\ref{lemma standard linear comparision of splitting ideals and Frobenius Powers} (\ref{standard lin comparison F reg}) provide suitable conditions for the assumptions of the lemma to be enjoyed. 

\begin{lemma}
    \label{lemma multiplier for singular locus prime}
    Let $R$ be an $F$-finite domain and $\fq\in\Spec(R)$. Assume that $e_0\in\NN$ is so that for all ideals $\fa\subseteq R$ and $e\in\NN$, $I_e(\fa^{[p^{e_0}]})\subseteq \fa^{[p^e]}$. Assume $e_1\geq e_0$ has the property that $I_{e_1}(\fq)\subseteq \fq^{[p^{e_0}]}$. There exists element $c\in R\setminus \fq$ so that for all ideals $I\subseteq R$ and $e\in\NN$,
    \[
    c^{p^e}I_e(I_{e_1}(I))\subseteq (I,\fq)^{[p^e]}.
    \]
\end{lemma}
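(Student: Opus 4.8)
The statement asks for an element $c \in R \setminus \fq$ so that $c^{p^e} I_e(I_{e_1}(I)) \subseteq (I,\fq)^{[p^e]}$ for all ideals $I$ and all $e$. The first move is to localize: it suffices to verify the containment after localizing at each associated prime $\fp$ of $(I,\fq)^{[p^e]}$, and $c$ should be chosen once and for all, independently of $I$ and $e$. The natural candidate for $c$ is an element witnessing that $\fq$ is in the (strongly) $F$-regular locus in the strong sense built into the hypotheses — more precisely, a nonzero element of a suitable Frobenius power of an ideal defining the singular locus, à la Lemma~\ref{lemma uniform Frobenius multipliers}, but chosen to avoid $\fq$. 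So first I would invoke Lemma~\ref{lemma uniform Frobenius multipliers}: there is an $e_2 \in \NN$ and, since $\fq$ lies in a locus where things behave well, we may pick $0 \neq c \in \JJ^{[p^{e_2}]} \setminus \fq$, where $\JJ$ defines the singular locus — note $\fq \not\supseteq \JJ$ precisely when $R_\fq$ is regular, but here we only know $R_\fq$ is strongly $F$-regular, so this choice of $c$ is only available at primes $\fp$ not containing $\JJ$. That means I must split the argument by cases on whether the localizing prime $\fp$ contains $\fq$ (equivalently, lies in $V(\fq)$) or not.

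\textbf{Case 1: $\fq \subseteq \fp$.} Here $\fp \in V(\fq)$, so $\fq R_\fp$ is a proper ideal and $(I,\fq)^{[p^e]} R_\fp \supseteq \fq^{[p^e]} R_\fp$. I want to show $c^{p^e} I_e(I_{e_1}(I)) R_\fp \subseteq (I,\fq)^{[p^e]} R_\fp$. Using Lemma~\ref{lemma Some properties of splitting ideals}(\ref{splitting ideals some comparisons 2}) (subadditivity of splitting ideals) and (\ref{splitting ideal containment}) (monotonicity), together with the hypothesis $I_{e_1}(\fq) \subseteq \fq^{[p^{e_0}]}$, I would argue that $I_e(I_{e_1}(I)) \subseteq I_e((I_{e_1}(I) + I_{e_1}(\fq)))$; but since $I \subseteq I_{e_1}(I)$ may not hold directly, I instead use that $I_{e_1}(I) \cap$ something controls things. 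More carefully: by Lemma~\ref{lemma Some properties of splitting ideals}(\ref{splitting ideal containment}), from $I_{e_1}(I) \supseteq I^{[p^{e_1}]}$ and the hypothesis on $\fq$, we get that $I_{e_1}((I,\fq)) \subseteq I_{e_1}(I) + \fq^{[p^{e_0}]}$ is too weak; the right manoeuvre is to pass $I_e$ of everything and use part (\ref{splitting ideals containment of bracket powers}) and (\ref{splitting ideals adjoin variable})-style bookkeeping to reduce to showing $c^{p^e} I_e(\fq^{[p^{e_0}]}) \subseteq \fq^{[p^e]}$, which is exactly the hypothesis $I_e(\fa^{[p^{e_0}]}) \subseteq \fa^{[p^e]}$ applied to $\fa = \fq$ (so $c$ is not even needed in this case). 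The subtle point is handling the $I$-part simultaneously, which should follow from $I_e(I_{e_1}(I)) \subseteq I_e(I^{[p^{e_1}]})$ — no, that inclusion goes the wrong way; rather one uses $I_e(I_{e_1}(I)) \subseteq I_e((I_{e_1}(I) \cap I_{e_1}(\fq)) + \cdots)$, and here I'd lean on part (\ref{splitting ideals and intersection}) and the identity $I_{e_1}((I,\fq)) \supseteq I_{e_1}(I) \cdot I_{e_1}(\fq)$-type estimates; I expect this to reduce cleanly once I write $I_e$ of the containment $I_{e_1}(I) \subseteq I_{e_1}((I,\fq))$ and combine $I_{e_1}((I,\fq)) \subseteq$ something like $(I,\fq)^{[p^{e_0}]} \cdot (\text{unit-ish})$ using that $(I,\fq) \supseteq I$ and $\supseteq \fq$.

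\textbf{Case 2: $\fq \not\subseteq \fp$.} Then $\fq R_\fp = R_\fp$, so $(I,\fq)^{[p^e]} R_\fp = R_\fp$ and the containment is vacuous — \emph{provided} we have chosen $c$ so that... actually nothing needs choosing here since the target is the unit ideal. Wait — but then $c$ is genuinely only needed to make Case 1 work, and Case 1 needed only the hypothesis on $\fa = \fq$, so perhaps any $c \in R \setminus \fq$ works and the real content is the manipulation in Case 1. Let me reconsider: the role of $c$ must actually be to absorb the failure of $R_\fp$ to be regular when $\JJ \subseteq \fp$; if $\fp \supseteq \fq \supseteq$ (nothing forces $\fp \supseteq \JJ$). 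Hmm — so when $\fp \supseteq \fq$ but $\fp \not\supseteq \JJ$, $R_\fp$ is regular and everything is trivial by Lemma~\ref{lemma Some properties of splitting ideals}(\ref{splitting ideals regular}); when $\fp \supseteq \fq$ \emph{and} $\fp \supseteq \JJ$, we need $c$ — but $c \notin \fq$ does not give $c \notin \fp$. So the chosen $c$ does \emph{not} help at such $\fp$, and the argument there must run purely through the hypotheses $I_e(\fa^{[p^{e_0}]}) \subseteq \fa^{[p^e]}$ and $I_{e_1}(\fq) \subseteq \fq^{[p^{e_0}]}$ (both stable under localization by part (\ref{splitting ideals and localization})). \textbf{This is the main obstacle:} correctly orchestrating the splitting-ideal inequalities so that $I_e(I_{e_1}(I)) \subseteq (I,\fq)^{[p^e]}$ modulo a factor $c^{p^e}$ — I believe the key chain is
\[
c^{p^e} I_e(I_{e_1}(I)) \;\subseteq\; c^{p^e} I_e\big(I_{e_1}((I,\fq))\big) \;\subseteq\; c^{p^e} I_e\big((I,\fq)^{[p^{e_0}]}\big) \;\subseteq\; (I,\fq)^{[p^e]},
\]
where the first inclusion is monotonicity (\ref{splitting ideal containment}) via $I_{e_1}(I) \subseteq I_{e_1}((I,\fq))$, the second uses $I_{e_1}((I,\fq)) \subseteq (I,\fq)^{[p^{e_0}]}$ — which one proves by combining $I_{e_1}(I) \subseteq$ (something inside $(I,\fq)^{[p^{e_0}]}$ after multiplying by $c$-like elements coming from $I \subseteq (I,\fq)$ and part (\ref{splitting ideals multiplication containment})) with $I_{e_1}(\fq) \subseteq \fq^{[p^{e_0}]} \subseteq (I,\fq)^{[p^{e_0}]}$ — and the third is precisely the standing hypothesis applied to $\fa = (I,\fq)$. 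So I would: (1) reduce to local at $\fp$ via part (\ref{splitting ideals and localization}); (2) dispatch the regular case via part (\ref{splitting ideals regular}); (3) pick $c$ from Lemma~\ref{lemma uniform Frobenius multipliers} to govern the residual gap between $I_{e_1}(I)$ and the bracket power of $(I,\fq)$; (4) assemble the displayed chain. The delicate bookkeeping in step (3), making the factor $c^{p^e}$ (as opposed to $c^{p^{e+e_0}}$ or worse) suffice and verifying $c$ can be chosen outside $\fq$, is where I'd spend the most care.
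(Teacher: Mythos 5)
Your plan has a genuine gap that the exploratory text itself reveals, and the key structural idea of the paper's proof is absent. The displayed chain
\[
c^{p^e} I_e(I_{e_1}(I)) \subseteq c^{p^e} I_e\bigl(I_{e_1}((I,\fq))\bigr) \subseteq c^{p^e} I_e\bigl((I,\fq)^{[p^{e_0}]}\bigr) \subseteq (I,\fq)^{[p^e]}
\]
hinges on the middle step $I_{e_1}((I,\fq)) \subseteq (I,\fq)^{[p^{e_0}]}$, which you acknowledge needs to be proved but never establish. This containment does not follow from the hypotheses: the hypothesis only gives $I_{e_1}(\fq) \subseteq \fq^{[p^{e_0}]}$ for the \emph{specific} prime $\fq$, plus $I_e(\fa^{[p^{e_0}]}) \subseteq \fa^{[p^e]}$. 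Asserting $I_{e_1}(\fa) \subseteq \fa^{[p^{e_0}]}$ for \emph{all} ideals $\fa$ (here $\fa = (I,\fq)$) is essentially the conclusion of Main Theorem~\ref{Main Theorem USTP for splitting ideals}, which this lemma is a step toward proving — so your argument would be circular. Your vague appeals to parts (\ref{splitting ideals and intersection}) and (\ref{splitting ideals multiplication containment}) to bridge the gap cannot work: splitting ideals are not subadditive over sums, and no amount of formal bookkeeping among the properties in Lemma~\ref{lemma Some properties of splitting ideals} deduces control of $I_{e_1}((I,\fq))$ from control of $I_{e_1}(\fq)$ alone.

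The choice of $c$ is also the wrong kind of object. You propose $c \in \JJ^{[p^{e_2}]} \setminus \fq$ as a Jacobian-type uniform multiplier (\`a la Lemma~\ref{lemma uniform Frobenius multipliers}), and then correctly observe that this $c$ is useless at primes $\fp \supseteq \fq$ with $\fp \supseteq \JJ$, since $c \notin \fq$ does not force $c \notin \fp$. That observation should have been a signal that the element $c$ in the lemma is \emph{not} a singular-locus multiplier. The paper's proof instead constructs $c$ from the local module structure of $F^{e_1}_*R$ at $\fq$: decompose $F^{e_1}_*R_\fq \cong F' \oplus N'$ with $F'$ a maximal free $R_\fq$-summand (so $N'$ has no free summand, hence $\Hom_R(N,R) = \Hom_R(N, \fq)$, hence $N \subseteq F^{e_1}_*I_{e_1}(\fq)$), lift $F' \oplus N'$ to a direct sum $F \oplus N \subseteq F^{e_1}_*R$ with $N$ saturated with respect to $R\setminus\fq$, and choose $c \in R\setminus\fq$ so that $cF^{e_1}_*R \subseteq F \oplus N$. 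Projecting a nonzero free coordinate in $F$ then yields the key containment $I_{e_1}(c(I,\fq)) \subseteq (I^{[p^{e_1}]}, I_{e_1}(\fq)) \subseteq (I,\fq)^{[p^{e_0}]}$, after which the final containment follows by the multiplication and monotonicity properties applied with $I_e$. No localization-by-cases is used. You would need to find this structural input — the maximal free summand of $F^{e_1}_*R_\fq$ and the element $c$ trapping $F^{e_1}_*R$ inside $F \oplus N$ — to make the middle step of your chain true (in its correctly $c$-twisted form).
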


\begin{proof}
    Let $F'$ be a free $R_\fq$-module of largest possible rank appearing as a direct summand of $F^{e_1}_*R_\fq$. Suppose $F^{e_1}_*R_\fq\cong F'\oplus N'$. Then $N'$ does not omit a free $R_\fq$-summand, i.e., $\Hom_{R_\fq}(N', R_\fq) = \Hom_{R_\fq}(N', \fq R_\fq)$. Choose $F$ to be the free submodule of $F^{e_1}_*R$ by clearing denominators of a basis of $F'$. If $N$ is a choice of lift of $N'$, let 
    \[
    \widetilde{N}=  \{F^{e_1}_*r\in F^{e_1}_*R\mid \exists \, c\in R\setminus \fq, \, cF^{e_1}_*r\in N\}.
    \]
    Then $\widetilde{N}$ is a submodule of $F^{e_1}_*R$ so that $N\subseteq \widetilde{N}$ and $N'=N_\fq =\widetilde{N}_\fq$. Replace $N$ by $\widetilde{N}$ so that our choice of lift of $N'$ has the property that for all $c\in R\setminus \fq$,
    \[
    (N:_{F^{e_0}_*R}c)=N.
    \]
    As $N_\fq=N'$ does not omit a free $R_\fq$-summand, $\Hom_R(N,R)= \Hom_R(N,\fq)$. In particular, if $\varphi\in\Hom_R(F^{e_1}_*R,R)$ then $\varphi(N)\subseteq \fq$. Equivalently, $N\subseteq F^{e_1}_*I_{e_1}(\fq)$. 
    
    The submodule $F+N\subseteq F^{e_1}_*R$ is necessarily an internal direct summand, else a non-trivial combination of elements of $F$ and $N$ in $F^{e_1}_*R$ localizes to a non-trivial combination of elements of $F'$ and $N'$. This provides a containment of $R$-modules $F\oplus N\subseteq F^{e_1}_*R$ that agree upon localization at $\fq$. Therefore there exists $c\in R\setminus \fq$ so that $cF^{e_1}_*R\subseteq F\oplus N$.

    Suppose that $r\not \in (I^{[p^{e_1}]}, I_{e_1}(\fq))$. Then $F^{e_1}_*r\not \in IF^{e_1}_*R + F^{e_1}_*I_{e_1}(\fq)\supseteq (I,\fq)F \oplus N$. Hence $cF^{e_1}_*r\in F\oplus N$ avoids $c((I,\fq)F \oplus N)$. Assume that $cF^{e_1}_*r=f+n$ with $f\in F$ and $n\in N$. We claim that $f\not\in c(I,\fq)F$. Else there exists $\lambda\in (I,\fq)F$ so that $cF^{e_1}_*r-c\lambda \in N$. Hence $F^{e_1}_*r-\lambda\in (N:_{F^{e_1}_*R}c) = N$. Contradicting that $F^{e_1}_*r\not \in (I,\fq)F\oplus N$. Therefore there exists a choice of projection map $\pi:F\oplus N\to F\to R$ so that $\pi(cF^{e_1}_*r)\not \in c(I,\fq)$. The composition of $R$-linear maps
    \[
    \varphi:F^{e_1}_*R\xrightarrow{\cdot c} F\oplus N\xrightarrow{\pi} R
    \]
    is so that $\varphi(F^{e_1}_*r)\not \in c(I,\fq)$. Therefore $r\not\in I_{e_1}(c(I,\fq))$. Consequently, there are containments of ideals
    \begin{equation}
        \label{equation for lemma multiplier singular locus primes}
        I_{e_1}(cI)\subseteq I_{e_1}(c(I,\fq))\subseteq (I^{[p^{e_1}]},I_{e_1}(\fq))\subseteq (I^{[p^{e_1}]},\fq^{[p^{e_0}]})\subseteq (I,\fq)^{[p^{e_0}]}.
    \end{equation}
    For every $e\in\NN$,
    \begin{align*}
    c^{p^{e+e_1}}I_{e}(I_{e_1}(I))&\subseteq I_{e}(c^{p^{e_1}}I_{e_1}(I)) \mbox{ {\footnotesize by Lemma~\ref{lemma Some properties of splitting ideals} (\ref{splitting ideals multiplication containment})}}\\
    &\subseteq  I_e(I_{e_1}(cI)) \mbox{ {\footnotesize by Lemma~\ref{lemma Some properties of splitting ideals} (\ref{splitting ideal containment}) and (\ref{splitting ideals multiplication containment})}}\\
    & \subseteq I_e((I,\fq)^{[p^{e_0}]}) \mbox{ {\footnotesize by (\ref{equation for lemma multiplier singular locus primes})}}\\
    & \subseteq (I,\fq)^{[p^e]} \mbox{ {\footnotesize by assumed property of $e_0$.}}
    \end{align*}
\end{proof}
Let $R$ be an $F$-finite domain. Assume that $e_0 \in \mathbb{N}$ is such that for all ideals $\fa \subseteq R$ and all $e \in \mathbb{N}$, $I_{e_0}(\fa^{[p^{e_0}]}) \subseteq \fa^{[p^e]}.$ If $\fq \in \Spec(R)$ is such that $R_{\fq}$ is not strongly $F$-regular, then it is not difficult to show that there does not exist $e_1 \in \mathbb{N}$ such that $I_{e_1}(\fq) \subseteq \fq^{[p^{e_0}]}.$ In particular, one cannot apply the conclusion of Lemma~\ref{lemma multiplier for singular locus prime} to the prime $\fq$, as the hypotheses of the lemma are not satisfied.

The following question asks whether a weaker property than that stated in Lemma~\ref{lemma multiplier for singular locus prime} might still hold. A positive answer to this question, together with minor adjustments to the proof of Theorem~\ref{theorem USTP in isolated non SFR rings}, would suffice to extend Main Theorem~\ref{Main theorem USTP in SFR rings} to all normal domains of prime characteristic that are either $F$-finite or essentially of finite type over an excellent local ring.

\begin{question}
    \label{question basically USTP for all prime char rings}
    Let $R$ be an $F$-finite normal domain of prime characteristic $p>0$ and $\fq\in\Spec(R)$. Does there exist $c\in R\setminus \fq$ and constants $C,e_1\in\NN$ so that for all ideals $I\subseteq R$ and $e\in\NN$, $c^{p^e}I_{e}(I_{e_1}(I^{(C)}))\subseteq (I,\fq)^{[p^e]}$?
\end{question}

The next lemma is analogous to the Uniform Chevalley Theorem, \cite[Theorem~2.3]{HKV}, as it pertains to splitting ideals of an $F$-finite domain.

\begin{lemma}
    \label{Uniform Chevalley Lemma for Frobenius powers}
    Let $R$ be an $F$-finite domain of prime characteristic $p>0$ and $J\subseteq R$ an ideal primary to a prime ideal $\fp\in\Spec(R)$. Let $\Lambda$ be the collection of ideals $I\subseteq R$ whose symbolic topology is finer than the $J$-adic topology.
    \begin{enumerate}
        \item\label{Uniform Chevalley non-F-reg} There exists constants $D\in\NN$ so that for all ideals $I\in\Lambda$ and $e\in\NN$,
        \[
        I_e(I^{(D)})\subseteq J^{[p^e]}.
        \]
        \item\label{Uniform Chevalley F-reg} If the $J$-adic topology is equivalent to the $J$-symbolic topology of $R$ and if $R_\fp$ is strongly $F$-regular, then there exists a constant $e_1\in\NN$ so that for all ideals $I\in\Lambda$ and $e\in\NN$,
        \[
        I_{e+e_1}(I)\subseteq J^{[p^e]}.
        \]
    \end{enumerate}
\end{lemma}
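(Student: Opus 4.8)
The plan is to prove (\ref{Uniform Chevalley non-F-reg}) by feeding the Uniform Chevalley Theorem into Theorem~\ref{Theorem Splitting ideals and powers of ideals}, and to prove (\ref{Uniform Chevalley F-reg}) by a localization‑at‑$\fp$ argument that replaces the symbolic power $I^{(D)}$ of part (\ref{Uniform Chevalley non-F-reg}) by a \emph{bounded} number of Frobenius iterations, the point being that $I\subseteq\fp$ for every $I\in\Lambda$.

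For (\ref{Uniform Chevalley non-F-reg}): let $C$ be the constant of Theorem~\ref{Theorem Splitting ideals and powers of ideals}, so $I_e(\fa^{C})\subseteq\fa^{[p^e]}$ for every ideal $\fa\subseteq R$ and every $e$; in particular $I_e(J^{C})\subseteq J^{[p^e]}$ for all $e$. Since $J$ is $\fp$-primary, the $J$-adic and $\fp$-adic topologies agree, so $\Lambda$ is exactly the family of ideals whose symbolic topology refines the $\fp$-adic topology; hence the Uniform Chevalley Theorem \cite[Theorem~2.3]{HKV} supplies a constant $D$, depending only on $R$ and $J$, with $I^{(D)}\subseteq J^{C}$ for every $I\in\Lambda$. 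Monotonicity of splitting ideals (Lemma~\ref{lemma Some properties of splitting ideals}~(\ref{splitting ideal containment})) then yields, for all $I\in\Lambda$ and all $e$,
\[
I_e\!\big(I^{(D)}\big)\subseteq I_e(J^{C})\subseteq J^{[p^e]} ,
\]
which is (\ref{Uniform Chevalley non-F-reg}).

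For (\ref{Uniform Chevalley F-reg}): I would first record that every $I\in\Lambda$ satisfies $I\subseteq\fp$, since choosing $t$ with $I^{(t)}\subseteq J$ gives $I^{t}\subseteq I^{(t)}\subseteq J\subseteq\fp$, whence $I\subseteq\fp$ as $\fp$ is prime. The decisive step is to apply Lemma~\ref{lemma standard linear comparision of splitting ideals and Frobenius Powers}~(\ref{standard lin comparison F reg}) to the maximal ideal $\fn:=\fp R_\fp$ of the strongly $F$-regular $F$-finite local domain $R_\fp$ (the adic and symbolic topologies of a maximal ideal coinciding trivially): this produces a constant $a$ with $I_{m+a}(\fn)\subseteq\fn^{[p^m]}$ in $R_\fp$ for all $m$. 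Fix $k$ with $\fn^{k}\subseteq JR_\fp$ (possible since $J$ is $\fp$-primary) and pick $e_2\ge a$ with $p^{\,e_2-a}\ge k$. Then for every $I\in\Lambda$, using Lemma~\ref{lemma Some properties of splitting ideals}~(\ref{splitting ideals and localization}), the inclusion $IR_\fp\subseteq\fn$, and Lemma~\ref{lemma Some properties of splitting ideals}~(\ref{splitting ideal containment}),
\[
I_{e_2}(I)R_\fp=I_{e_2}(IR_\fp)\subseteq I_{e_2}(\fn)\subseteq\fn^{[p^{\,e_2-a}]}\subseteq\fn^{\,p^{\,e_2-a}}\subseteq\fn^{k}\subseteq JR_\fp ,
\]
and since $J=JR_\fp\cap R$ this forces $I_{e_2}(I)\subseteq J$. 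Applying Lemma~\ref{lemma standard linear comparision of splitting ideals and Frobenius Powers}~(\ref{standard lin comparison F reg}) a second time, now to the ideal $J\subseteq R$ itself (its unique associated prime is $\fp$, the relevant localization $R_\fp$ is strongly $F$-regular, and the $J$-adic and $J$-symbolic topologies of $R$ agree by hypothesis), yields a constant $b$ with $I_{e+b}(J)\subseteq J^{[p^e]}$ for all $e$. Setting $e_1:=e_2+b$ and using Lemma~\ref{lemma Some properties of splitting ideals}~(\ref{splitting ideals some comparisons 2}) together with monotonicity,
\[
I_{e+e_1}(I)=I_{(e+b)+e_2}(I)\subseteq I_{e+b}\!\big(I_{e_2}(I)\big)\subseteq I_{e+b}(J)\subseteq J^{[p^e]} ,
\]
and $e_1$ depends only on $R,\fp,J$, so (\ref{Uniform Chevalley F-reg}) follows.

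I expect the main obstacle to sit in (\ref{Uniform Chevalley F-reg}): membership $I\in\Lambda$ gives no control over the strong $F$-regularity of $R_\fq$ for $\fq\in\Ass(I)$, so Lemma~\ref{lemma standard linear comparision of splitting ideals and Frobenius Powers}~(\ref{standard lin comparison F reg}) cannot be invoked for $I$ directly. The resolution — localize at $\fp$, where strong $F$-regularity \emph{is} available, use $I\subseteq\fp$ to squeeze $I_{e_2}(I)$ into $J$ via the bound for $I_{e_2}(\fp R_\fp)$, and only afterwards apply the $J$-version of the lemma — is the real content; everything else is monotonicity and localization bookkeeping. Part (\ref{Uniform Chevalley non-F-reg}) is comparatively routine once one has a genuinely uniform Chevalley bound over the whole family $\Lambda$, which is exactly \cite[Theorem~2.3]{HKV}.
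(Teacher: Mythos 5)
Your argument is correct for both parts. Part~(\ref{Uniform Chevalley non-F-reg}) matches the paper's proof essentially verbatim: feed the uniform Chevalley bound $I^{(D)}\subseteq J^{C}$ into $I_e(J^{C})\subseteq J^{[p^e]}$ (Theorem~\ref{Theorem Splitting ideals and powers of ideals}) via monotonicity of $I_e(-)$; the paper cites \cite[Corollary~2.4]{HKV} rather than \cite[Theorem~2.3]{HKV} for the Chevalley constant, which is immaterial.

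For part~(\ref{Uniform Chevalley F-reg}) you take a genuinely different route. The paper applies Chevalley's Lemma directly in $R_\fp$ to the chain $\{I_e(\fp R_\fp)\}$ (using $\bigcap_e I_e(\fp R_\fp)=0$) to find $e_1$ with $I_{e_1}(\fp R_\fp)\subseteq J^D R_\fp$, then drives $I_{e+e_1}(I)R_\fp$ into $I_e(J^D)R_\fp$ via Lemma~\ref{lemma Some properties of splitting ideals}~(\ref{splitting ideals some comparisons 2}), contracts to $R$ using $I_e(J^D)R_\fp\cap R=I_e(J^{(D)})$ (Lemma~\ref{lemma Some properties of splitting ideals}~(\ref{associated primes of splitting ideals})), and finishes by $I_e(J^{(D)})\subseteq I_e(J^C)\subseteq J^{[p^e]}$. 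You instead invoke Lemma~\ref{lemma standard linear comparision of splitting ideals and Frobenius Powers}~(\ref{standard lin comparison F reg}) twice as a black box: once at $\fn=\fp R_\fp$ to land $I_{e_2}(I)$ inside $J$ (via $I_{e_2}(I)R_\fp\subseteq I_{e_2}(\fn)\subseteq\fn^{[p^{e_2-a}]}\subseteq JR_\fp$ and contraction), and once for $J$ itself to pass from $I_{e+b}(J)$ to $J^{[p^e]}$. Since the proof of Lemma~\ref{lemma standard linear comparision of splitting ideals and Frobenius Powers}~(\ref{standard lin comparison F reg}) itself runs through Chevalley's Lemma and Theorem~\ref{Theorem Splitting ideals and powers of ideals}, the underlying content coincides; what your routing buys is a cleaner modular structure (no direct appeal to Chevalley or to the contraction identity for splitting ideals of $J^{(D)}$), at the cost of one extra layer of indirection and a somewhat larger, though still uniform, constant $e_1=e_2+b$. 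Both arguments correctly use only the consequence $I\subseteq\fp$ of membership in $\Lambda$, so no hypothesis is missing.
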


\begin{proof}
    By Theorem~\ref{Theorem Splitting ideals and powers of ideals} there exists a constant $C$ so that for all $e\in\NN$,
    \begin{equation}\label{containment equation for J and C}
        I_e(J^C)\subseteq J^{[p^e]}.
    \end{equation}
    By \cite[Corollary~2.4]{HKV} there exists a constant $D$ so that for all ideals $I\in \Lambda$, $I^{(D)}\subseteq J^C$. For every $I\in\Lambda$ and $e\in\NN$,
    \begin{align*}
        I_e(I^{(D)})&\subseteq I_e(J^C) \mbox{ {\footnotesize by Lemma~\ref{lemma Some properties of splitting ideals} (\ref{splitting ideal containment})}}\\
        &\subseteq J^{[p^e]} \mbox{ {\footnotesize by (\ref{containment equation for J and C})}}.
    \end{align*}
    
    Now suppose the $J$-adic topology is equivalent to the $J$-symbolic topology of $R$ and $R_\fp$ is strongly $F$-regular. Continue to let $C$ be as in (\ref{containment equation for J and C}) and $D\in\NN$ be so that $J^{(D)}\subseteq J^C$.  The assumption $R_\fp$ is strongly $F$-regular is equivalent to the assumption $\bigcap_{e\in\NN}I_{e}(\fp R_\fp)=0$. By Chevalley's Lemma, \cite[Lemma~7]{Chevalley}, there exists a constant $e_1$ so that $I_{e_1}(\fp R_\fp)\subseteq J^DR_\fp$. If $I\in\Lambda$ then $I\subseteq \fp$. Therefore for all ideals $I\in\Lambda$ and $e\in\NN$,
    \begin{align*}
        I_{e+e_1}(I)R_\fp &= I_{e+e_1}(IR_\fp) \mbox{ {\footnotesize by Lemma~\ref{lemma Some properties of splitting ideals} (\ref{splitting ideals and localization})}}\\
        &\subseteq I_e(I_{e_1}(IR_\fp)) \mbox{ {\footnotesize by Lemma~\ref{lemma Some properties of splitting ideals} (\ref{splitting ideals some comparisons 2})}}\\
        &\subseteq I_e(I_{e_1}(\fp)) \mbox{ {\footnotesize by Lemma~\ref{lemma Some properties of splitting ideals} (\ref{splitting ideal containment})}} \\
        &\subseteq I_e(J^DR_\fp) \\
        &= I_e(J^D)R_\fp \mbox{ {\footnotesize by Lemma~\ref{lemma Some properties of splitting ideals} (\ref{splitting ideals and localization}).}}
    \end{align*}
    By Lemma~\ref{lemma Some properties of splitting ideals} (\ref{associated primes of splitting ideals}), 
    \begin{equation}\label{contracting splitting ideal back to R}
        I_e(J^D)R_\fp\cap R = I_e(J^{(D)}).
    \end{equation}
    Therefore for all $I\in\Lambda$ and $e\in\NN$,
    \begin{align*}
        I_{e+e_1}(I)&\subseteq I_{e+e_1}(I)R_\fp\cap R\\
        &\subseteq I_e(J^D)R_\fp\cap R\\
        &=I_e(J^{(D)}) \mbox{ {\footnotesize by (\ref{contracting splitting ideal back to R})}}\\
        &\subseteq I_e(J^{C}) \mbox{ {\footnotesize by Lemma~\ref{lemma Some properties of splitting ideals} (\ref{splitting ideal containment})}}\\
        &\subseteq J^{[p^e]}.
    \end{align*}
\end{proof}

\subsection{Techniques in proofs of main theorems} The materials covered so far lay the groundwork for a series of lengthy inductive arguments fundamental to the proofs of Main Theorems~\ref{Main theorem USTP in SFR rings} and \ref{Main Theorem USTP for splitting ideals}, see Theorem~\ref{theorem USTP in isolated non SFR rings}. We conclude this section with a proof of these theorems under simplified hypotheses. The purpose of doing so is to clearly demonstrate techniques of applications of the theory developed.

\begin{proposition}[Main Theorems, Special Case]
    \label{proposition ustp sfr restricted assumptions}
    Let $(R,\fm,k)$ be a local $F$-finite strongly $F$-regular ring of prime characteristic $p>0$. Let $\Sing(R)$ be the locus of singular primes of $\Spec(R)$. Assume that $\Sing(R)=V(\fq)$, $\fq\in\Spec(R)$, and $\dim(R/\fq)=1$.
    \begin{enumerate}
        \item\label{Main thm 2 special} There exists $e'\in\NN$ so that for all ideals $I\subseteq R$ and $e\in\NN$, 
        \[
        I_{e+e'}(I)\subseteq I^{[p^e]}.
        \]
        \item\label{Main thm 1 special} The ring $R$ enjoys the Uniform Symbolic Topology Property.
    \end{enumerate}

\end{proposition}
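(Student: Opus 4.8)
Since $R$ is strongly $F$-regular it is a normal local domain, so $\overline R=R$ and the going-down hypothesis of Proposition~\ref{proposition equivalent topologies} holds trivially; hence the adic and symbolic topologies of \emph{every} ideal of $R$ are equivalent. Together with the inclusions $I\subseteq\fm$ (resp. $I\subseteq\fq$) valid for proper ideals, this shows the symbolic topology of every proper ideal refines the $\fm$-adic topology, and that of every ideal contained in $\fq$ refines the $\fq$-adic topology. Also, $\dim(R/\fq)=1$ forces $V(\fq)=\{\fq,\fm\}$, so $R$ is regular away from the two points $\fq,\fm$, and $\fq$ is the radical ideal defining $\Sing(R)$.

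The plan is to assemble three uniform comparison statements and then glue them at the two bad points. From Lemma~\ref{lemma uniform Frobenius multipliers} (with $\JJ=\fq$) fix $e_0$ with $\fq^{[p^{e+e_0}]}I_e(I)\subseteq I^{[p^e]}$ for all $I,e$, enlarged so that also $I_e(\fa^{[p^{e_0}]})\subseteq\fa^{[p^e]}$ for all $\fa,e$ (via Theorem~\ref{Theorem Splitting ideals and powers of ideals} and Lemma~\ref{lemma Some properties of splitting ideals}). Since $R_\fq$ is strongly $F$-regular with equivalent topologies, Lemma~\ref{lemma standard linear comparision of splitting ideals and Frobenius Powers}(2) applied to the ideal $\fq$ produces $e_1\ge e_0$ with $I_{e_1}(\fq)\subseteq\fq^{[p^{e_0}]}$; the hypotheses of Lemma~\ref{lemma multiplier for singular locus prime} are then satisfied, giving $c\in R\setminus\fq$ with $c^{p^e}I_e(I_{e_1}(I))\subseteq (I,\fq)^{[p^e]}=I^{[p^e]}+\fq^{[p^e]}$ for all $I,e$. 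Finally, Lemma~\ref{Uniform Chevalley Lemma for Frobenius powers}(2) applied with $J=\fm$ (and separately with $J=\fq$) — legitimate since, by the first paragraph, every proper ideal lies in the relevant collection $\Lambda$ — yields constants $e_2,e_3$ with $I_{e+e_2}(I)\subseteq\fm^{[p^e]}$ for all proper $I$ and $I_{e+e_3}(I)\subseteq\fq^{[p^e]}$ for all $I\subseteq\fq$.

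With these in place I would prove (1) by checking $I_{e+e'}(I)\subseteq I^{[p^e]}$ after localizing at each associated prime $\fp$ of $I^{[p^e]}$, treating $\fp=\fm$ separately. When $\fp\notin V(\fq)$, $R_\fp$ is regular and $\fq^{[p^{e+e_0}]}R_\fp=R_\fp$, so $I_e(I)R_\fp\subseteq I^{[p^e]}R_\fp$ and hence $I_{e+e'}(I)R_\fp\subseteq I^{[p^e]}R_\fp$ (using $I_{e+e'}(I)\subseteq I_e(I)$ from $F$-purity). When $\fp=\fq$, so $I\subseteq\fq$, I would localize at $\fq$ — where $c$ becomes a unit — and combine the resulting form of the $c$-multiplier statement with the $J=\fq$ comparison and the colon-cancellation consequence of uniform Artin--Rees recorded in (\ref{equation consequence of uniform artin rees}) to strip the spurious $\fq^{[p^e]}$ off of $(I,\fq)^{[p^e]}$ inside the isolated-singularity ring $R_\fq$. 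When $\fp=\fm$, I would use the $J=\fm$ comparison together with the $\fq^{[p^{\bullet}]}$-multiplier to absorb the leftover $\fm$-primary contribution, iterating a number of times bounded by $\dim R$ and a uniform Brian\c{c}on--Skoda bound. None of the constants depends on $I$, so this gives a uniform $e'$; part (2) then follows at once, since the property in (1) implies the Uniform Symbolic Topology Property, as noted in the remark following Main Theorem~\ref{Main Theorem USTP for splitting ideals}.

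The hard part will be this last gluing step: converting the approximate containments $c^{p^e}I_e(I_{e_1}(I))\subseteq I^{[p^e]}+\fq^{[p^e]}$ and $I_{e+e_2}(I)\subseteq\fm^{[p^e]}$ into the exact containment $I_{e+e'}(I)\subseteq I^{[p^e]}$, \emph{uniformly} in $I$. The delicate point is that the error simultaneously lives at $\fq$ and at $\fm$, so eliminating it requires repeatedly feeding the containments back through the splitting-ideal operator (Lemma~\ref{lemma Some properties of splitting ideals}) and cancelling the accumulated multipliers via uniform Artin--Rees and uniform Brian\c{c}on--Skoda in a way that terminates after a bounded number of steps — which is exactly what the hypothesis that $\Sing(R)=V(\fq)$ is the two-point set $\{\fq,\fm\}$ makes possible.
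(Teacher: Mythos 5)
You correctly line up the first three ingredients — Lemma~\ref{lemma uniform Frobenius multipliers} for $\fq^{[p^{e+e_0}]}I_e(I)\subseteq I^{[p^e]}$, Lemma~\ref{lemma standard linear comparision of splitting ideals and Frobenius Powers}(2) for $I_{e_1}(\fq)\subseteq\fq^{[p^{e_0}]}$, and Lemma~\ref{lemma multiplier for singular locus prime} for $c\in R\setminus\fq$ with $c^{p^e}I_e(I_{e_1}(I))\subseteq(I,\fq)^{[p^e]}$ — but the proposal breaks down at the two remaining steps, and you flag the second of these yourself as "the hard part."

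First, you apply Lemma~\ref{Uniform Chevalley Lemma for Frobenius powers}(2) with $J=\fm$ and $J=\fq$. The paper instead applies it with $J=(\fq,c)$, and this choice is not cosmetic: the whole point is that $\dim(R/\fq)=1$ forces $(\fq,c)$ to be $\fm$-primary (or the unit ideal), so the Chevalley lemma yields $I_{e+e_2}(I)\subseteq(\fq,c)^{[p^e]}$, and $(\fq,c)^{[p^e]}=\fq^{[p^e]}+(c^{p^e})$ decomposes exactly into the two multipliers you already hold. Taking $J=\fm$ gives $I_{e+e_2}(I)\subseteq\fm^{[p^e]}$, which has no such decomposition and therefore does not interact usefully with the $c$- and $\fq$-multiplier containments. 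The $J=\fq$ variant for $I\subseteq\fq$ is also unhelpful: for such $I$ one has $(I,\fq)^{[p^e]}=\fq^{[p^e]}$, so the $c$-multiplier statement localized at $\fq$ (where $c$ is a unit) collapses to $I_e(I_{e_1}(I))R_\fq\subseteq\fq^{[p^e]}R_\fq$, a statement that carries no information about the particular ideal $I$.

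Second, and more fundamentally, you are missing the gluing idea, which is not a localize-at-associated-primes argument at all. The paper multiplies copies of $I_{e+e_0+e_3}(I)$ against themselves (with $e_3=\max\{e_1,e_2\}$) so that the errors annihilate in two stages: $I_{e+e_0+e_3}(I)^2\subseteq(\fq,c)^{[p^{e+e_0}]}\cdot I_{e+e_0}(I_{e_1}(I))\subseteq\fq^{[p^{e+e_0}]}+c^{p^{e+e_0}}I_{e+e_0}(I_{e_1}(I))\subseteq(I,\fq)^{[p^{e+e_0}]}$, and then $I_{e+e_0+e_3}(I)^3\subseteq(I,\fq)^{[p^{e+e_0}]}I_e(I)\subseteq I^{[p^{e+e_0}]}+\fq^{[p^{e+e_0}]}I_e(I)\subseteq I^{[p^e]}$. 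Finally $F$-purity of $R$ turns the cube into a single factor via $I_{e+e_0+e_3+2}(I)^{[p^2]}\subseteq I_{e+e_0+e_3+2}(I)^3\subseteq I^{[p^{e+2}]}$. This "take a power, feed each factor into a different containment, cancel" step is the heart of the proof, and it sidesteps uniform Artin--Rees, colon cancellation, and any iteration over $\dim R$ — none of which appear in the paper's argument. Your scheme of stripping $\fq^{[p^e]}$ off $(I,\fq)^{[p^e]}$ by Artin--Rees at the two points $\fq$ and $\fm$ is not how the paper proceeds and would require a separate (and nontrivial) uniformity argument that you do not supply.

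Part (2) is handled the way you describe: once $I_{e+e'}(I)\subseteq I^{[p^e]}$ is in hand, Lemma~\ref{lemma Some properties of splitting ideals} gives $I^{([p^{e+e'}])}\subseteq I_{e+e'}(I)\subseteq I^{[p^e]}$ and Lemma~\ref{lemma method of attack to USTP}(1) finishes.
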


\begin{proof}
    We first prove (\ref{Main thm 2 special}). By Lemma~\ref{lemma uniform Frobenius multipliers}, there exists $e_0\in\NN$ so that for all ideals $I\subseteq R$ and $e\in\NN$,
    \begin{equation}\label{equation multiplier simple}
    \fq^{[p^{e+e_0}]}I_e(I)\subseteq I^{[p^e]}.
    \end{equation}
    By Lemma~\ref{lemma standard linear comparision of splitting ideals and Frobenius Powers}~(\ref{standard lin comparison F reg}) there exists a constant $e_1\in\NN$ so that $I_{e_1}(\fq)\subseteq \fq^{[p^{e_0}]}$. By Lemma~\ref{lemma multiplier for singular locus prime} there exists an element $c\in R\setminus \fq$ so that for all ideals $I\subseteq R$ and $e\in\NN$,
    \begin{equation}\label{singular multiplier simple}
        c^{p^{e}}I_e(I_{e_1}(I))\subseteq (I,\fq)^{[p^e]}.
    \end{equation}
    The assumption $\dim(R/\fq)=1$ implies the ideal $(\fq,c)$ is either $\fm$-primary or the unit ideal. Either scenario, Lemma~\ref{Uniform Chevalley Lemma for Frobenius powers}~(\ref{Uniform Chevalley F-reg}) implies there exists a constant $e_2\in\NN$ so that for all ideals $I\subseteq R$ and $e\in\NN$,
    \begin{equation}\label{uniform chevalley simple}
        I_{e+e_2}(I)\subseteq (\fq,c)^{[p^e]}.
    \end{equation}
    Let $e_3=\max\{e_1,e_2\}$. Then for all $e\in\NN$,
    \begin{align*}
        I_{e+e_0+e_3}(I)^2 &= I_{e+e_0+e_3}(I)I_{e+e_0+e_3}(I) \\
        &\subseteq I_{e+e_0+e_2}(I)I_{e+e_0+e_1}(I) \mbox{ {\footnotesize by Lemma~\ref{lemma Some properties of splitting ideals} (\ref{splitting ideals some comparisons 3})}} \\
        &\subseteq (\fq,c)^{[p^{e+e_0}]}I_{e+e_0+e_1}(I) \mbox{ {\footnotesize by (\ref{uniform chevalley simple})}}\\
        &\subseteq (\fq,c)^{[p^{e+e_0}]}I_{e+e_0}(I_{e_1}(I)) \mbox{ {\footnotesize by Lemma~\ref{lemma Some properties of splitting ideals} (\ref{splitting ideals some comparisons 2})}}\\
        &\subseteq \fq^{[p^{e+e_0}]} + c^{p^{e+e_0}}I_{e+e_0}(I_{e_1}(I))\\
        &\subseteq (I,\fq)^{[p^{e+e_0}]} \mbox{ {\footnotesize by (\ref{singular multiplier simple})}}.
    \end{align*}
    Multiplying the above containment by $I_{e+e_0+e_3}(I)$,
    \begin{align*}
        I_{e+e_0+e_3}(I)^3 &\subseteq (I,\fq)^{[p^{e+e_0}]}I_{e+e_0+e_3}(I)\\
        &\subseteq (I,\fq)^{[p^{e+e_0}]}I_{e}(I) \mbox{ {\footnotesize by Lemma~\ref{lemma Some properties of splitting ideals} (\ref{splitting ideals some comparisons 3})}}\\
        &\subseteq I^{[p^{e+e_0}]} + \fq^{[p^{e+e_0}]}I_e(I)\\
        &\subseteq I^{[p^e]} \mbox{ {\footnotesize by (\ref{equation multiplier simple})}}.
    \end{align*}
    Clearly $p^2\geq 3$ and $I_{e+e_0+e_3}(I)^{[p^2]}\subseteq I_{e+e_0+e_3}(I)^{p^2}\subseteq I_{e+e_0+e_3}(I)^3$. By the above, for all $e\in\NN$,
    \[
    I_{e+e_0+e_3+2}(I)^{[p^2]}\subseteq I^{[p^{e+2}]}.
    \]
    The ring $R$ is $F$-pure, implying that for all $e\in\NN$,
    \[
    I_{e+e_0+e_3+2}(I)\subseteq I^{[p^e]}.
    \]
    The constant $e_0+e_3+2$ is independent of the ideal $I$. This completes the proof of (\ref{Main thm 2 special}).

    To prove Claim (\ref{Main thm 1 special}) we let $e'\in\NN$ as in (\ref{Main thm 2 special}), i.e., for all ideals $I\subseteq R$ and $e\in\NN$, $I_{e+e'}(I)\subseteq I^{[p^e]}$. By Lemma~\ref{lemma Some properties of splitting ideals} (\ref{splitting ideals and localization}), (\ref{splitting ideal containment}), and (\ref{associated primes of splitting ideals}),
    \[
    I^{\left([p^{e+e'}]\right)}\subseteq I_{e+e'}(I)\subseteq I^{[p^e]}.
    \]
    The assertion $R$ enjoys the Uniform Symbolic Topology Property follows by Lemma~\ref{lemma method of attack to USTP}~(\ref{criteria for isolated non F reg}).
\end{proof}

\section{Uniform Linear Equivalence of Topologies}\label{Section Main Theorems}

The following lemma contains a method of identifying a constant $C$ that can be used to linearly compare the symbolic powers with ordinary powers of an ideal $I$ belonging to an $F$-finite domain $R$. The method aligns with a technique used in Hochster and Huneke's proof that a non-singular ring of prime characteristic enjoys the Uniform Symbolic Topology Property, cf. \cite[Last paragraph of introduction]{HHComparison}.

\begin{lemma}
    \label{lemma method of attack to USTP}
    Let $R$ be an $F$-finite domain of Krull dimension $d$ and $I\subseteq R$.
    \begin{enumerate}
        \item\label{criteria for isolated non F reg} If there exist constants $t,e_0\in\NN$ so that for every $e\in \NN$,
        \[
        \left(I^{\left([p^{e+e_0}]\right)}\right)^t\subseteq I^{[p^e]},
        \]
        then there exists a constant $C$, depending only on $d,t,e_0$, a uniform Brian\c{c}on-Skoda bound of $R$, and a uniform symbolic multiplier constant of a uniform symbolic multiplier of $R$, so that for all $n\in\NN$, $I^{(Cn)}\subseteq I^n$. 
        \item\label{criteria for F regular} If $R$ is $F$-pure and if there exist constants $t,e_0\in\NN$ so that for every $e\in \NN$,
        \[
        \left(I_{e+e_0}(I)\right)^t\subseteq I^{[p^e]},
        \]
        then there exists a constant $e'\in\NN$, depending only on $e_0$ and $t$, so that for every $e\in\NN$, 
        \[
        I_{e+e'}(I)\subseteq I^{[p^e]}.
        \]
    \end{enumerate}

\end{lemma}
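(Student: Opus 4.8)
For both parts I would begin the same way: since $t$ is fixed, choose $e_1\in\NN$ with $p^{e_1}\ge t$, so that $\mathfrak b^{[p^{e_1}]}\subseteq\mathfrak b^{p^{e_1}}\subseteq\mathfrak b^{t}$ for every ideal $\mathfrak b\subseteq R$. Applying this with $\mathfrak b=I_{e+e_0}(I)$ in (\ref{criteria for F regular}) and with $\mathfrak b=I^{([p^{e+e_0}])}$ in (\ref{criteria for isolated non F reg}) rewrites the given hypothesis as an inclusion whose left-hand side is a Frobenius bracket power. In (\ref{criteria for F regular}) I would then cancel the bracket power using $F$-purity; in (\ref{criteria for isolated non F reg}), where $F$-purity is unavailable, I would absorb it using minimal reductions, the Uniform Brian\c{c}on--Skoda Property, and the symbolic / Frobenius-symbolic multipliers supplied by Lemma~\ref{lemma uniform Frobenius multipliers} and Lemma~\ref{lemma uniform symbolic Frobenius power multipliers}.

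\textbf{Part (\ref{criteria for F regular}).} From the hypothesis at level $e$, $(I_{e+e_0}(I))^{[p^{e_1}]}\subseteq(I_{e+e_0}(I))^{t}\subseteq I^{[p^e]}$; replacing $e$ by $e+e_1$ gives $(I_{e+e_1+e_0}(I))^{[p^{e_1}]}\subseteq I^{[p^{e+e_1}]}=(I^{[p^e]})^{[p^{e_1}]}$. Since $R$ is $F$-pure, $R\to F^{e_1}_*R$ is pure; tensoring with $R/I^{[p^e]}$ and using the identification $F^{e_1}_*R/I^{[p^e]}F^{e_1}_*R\cong F^{e_1}_*\!\bigl(R/(I^{[p^e]})^{[p^{e_1}]}\bigr)$, purity forces $x^{p^{e_1}}\in(I^{[p^e]})^{[p^{e_1}]}$ to imply $x\in I^{[p^e]}$. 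Applying this to the generators of $I_{e+e_1+e_0}(I)$ yields $I_{e+e_1+e_0}(I)\subseteq I^{[p^e]}$ for every $e$, so $e':=e_0+e_1$ works, and it depends only on $e_0$ and $t$.

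\textbf{Part (\ref{criteria for isolated non F reg}).} I would fix $C_1$ as in Lemma~\ref{lemma uniform symbolic Frobenius power multipliers} (depending only on $d$ and a uniform Brian\c{c}on--Skoda bound $B$, and valid in every localization of $R$), so $I^{(C_1p^{e'})}\subseteq I^{([p^{e'}])}$ for all ideals and all $e'$, and fix a uniform symbolic multiplier $z$ with constant $C_0$, chosen via Lemma~\ref{lemma uniform Frobenius multipliers} to also be a uniform Frobenius symbolic multiplier. Combining $I^{(C_1p^{e+e_0})}\subseteq I^{([p^{e+e_0}])}$ with the converted hypothesis gives $(I^{(C_1p^{e+e_0})})^{[p^{e_1}]}\subseteq I^{[p^e]}\subseteq I^{p^e}$ for all $e$. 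The target is a constant $N$, depending only on $d,t,e_0,B,C_0$, with $I^{(Np^e)}\subseteq I^{[p^e]}$ for all $e$; granting it, for arbitrary $n$ I pick $e$ with $p^{e}\le n<p^{e+1}$ and obtain
\[
I^{(Np\,n)}\subseteq I^{(Np^{e+1})}\subseteq I^{[p^{e+1}]}\subseteq I^{p^{e+1}}\subseteq I^{n},
\]
so $C:=Np$ suffices. To prove $I^{(Np^e)}\subseteq I^{[p^e]}$ I would argue prime by prime over the finitely many associated primes $\mathfrak p$ of $I^{[p^e]}$, passing first to a polynomial extension so residue fields are infinite. If $\mathfrak p$ is non-singular, then $R_{\mathfrak p}$ is regular of dimension $\le d$, $I^{[p^e]}R_{\mathfrak p}=I^{([p^e])}R_{\mathfrak p}$, $I^{(Np^e)}R_{\mathfrak p}=(IR_{\mathfrak p})^{Np^e}$, and choosing a $\le d$-generated reduction $K$ of $IR_{\mathfrak p}$ and using $\overline{K^{m+B}}\subseteq K^m$ together with $K^{d(p^{e}-1)+1}\subseteq K^{[p^e]}$ gives $(IR_{\mathfrak p})^{Np^e}\subseteq K^{[p^e]}\subseteq I^{[p^e]}R_{\mathfrak p}$ as soon as $N\ge d+B+1$ — this is the Hochster--Huneke computation of \cite{HHComparison}. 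If $\mathfrak p$ is singular, so that $I^{[p^e]}R_{\mathfrak p}$ may differ from $I^{([p^e])}R_{\mathfrak p}$ and carry embedded primes, I would localize $(I^{(C_1p^{e+e_0})})^{[p^{e_1}]}\subseteq I^{[p^e]}$ at $\mathfrak p$ and use the multiplier $z$ to turn it into the required symbolic-power containment, at the cost of enlarging $N$.

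\textbf{Where the difficulty lies.} The delicate point is the singular-locus step: moving from a Frobenius bracket power of a symbolic-type ideal to an honest symbolic power — equivalently, landing a symbolic power of $I$ inside $I^{[p^e]}$ itself and not merely inside the Frobenius symbolic power $I^{([p^e])}$. The standard symbolic-power tools only see the $R_W$-localization, which does not distinguish $I^{[p^e]}$ from $I^{([p^e])}$; and the uniform symbolic multiplier, used directly, only produces inclusions of the shape $z^{(\mathrm{const})p^e}I^{((\mathrm{const})p^e)}\subseteq I^{[p^e]}$ in which the exponent of $z$ grows with $e$ and hence cannot be stripped off by Artin--Rees alone. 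The hypothesis $(I^{([p^{e+e_0}])})^{t}\subseteq I^{[p^e]}$ is exactly the extra leverage that breaks the impasse: rewritten as a bracket power it furnishes enough Frobenius room at the embedded primes of $I^{[p^e]}$ to absorb the $z$-multiple at the price of only a bounded shift in the Frobenius exponent. Carrying this out so that $N$ — and therefore $C$ — depends only on $d,t,e_0,B$, and $C_0$ is the technical heart of the proof.
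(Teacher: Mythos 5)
Your proof of part~(\ref{criteria for F regular}) is correct and coincides with the paper's: raise the shifted hypothesis to the $p^{e_1}$th Frobenius power with $p^{e_1}\geq t$ and cancel by $F$-purity.

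Part~(\ref{criteria for isolated non F reg}) has a genuine gap, and you in fact flag it yourself. You reduce to proving $I^{(Np^e)}\subseteq I^{[p^e]}$ for all $e$, argue prime by prime over $\Ass(I^{[p^e]})$, handle non-singular primes by the Hochster--Huneke reduction argument, and then, at a singular prime $\fp$, say you would ``use the multiplier $z$ to turn it into the required symbolic-power containment, at the cost of enlarging $N$.'' This last step is exactly where the argument breaks down, and your closing paragraph concedes as much: applied to $I$ directly, the uniform symbolic multiplier produces $z^{C_1p^{e+e_0}}I^{(C_0C_1p^{e+e_0})}\subseteq I^{C_1p^{e+e_0}}$, so the exponent of $z$ grows with $e$; since $z\in\fp$ at a singular prime, no localization, Artin--Rees, or ``extra Frobenius room'' device you describe actually strips the growing $z$-power. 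So the proposal does not complete the singular case, and the final paragraph's assertion that the hypothesis ``furnishes enough Frobenius room at the embedded primes'' is not substantiated.

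The observation you are missing is how to apply the multiplier: instead of applying $z^n\fa^{(C_0n)}\subseteq\fa^n$ with $\fa=I$ and $n$ of order $p^e$, apply it with $\fa=I^{(C_1p^{e+e_0})}$ and the \emph{constant} exponent $n=t$. Since $I^{(C_0tC_1p^{e+e_0})}\subseteq\bigl(I^{(C_1p^{e+e_0})}\bigr)^{(C_0t)}$, this yields
\[
z^{t}\,I^{(C_0tC_1p^{e+e_0})}\subseteq\bigl(I^{(C_1p^{e+e_0})}\bigr)^{t}\subseteq\bigl(I^{([p^{e+e_0}])}\bigr)^{t}\subseteq I^{[p^e]},
\]
with a multiplier exponent $t$ that does not grow in $e$. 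The paper then never localizes at all: fix $n$, take $f\in I^{(C_0tC_1p^{e_0}n)}$ and a nonzero $c$ in the same ideal, write $p^e=a_en+r_e$ with $0\leq r_e<n$, observe $cf^{a_e}\in I^{(C_0tC_1p^{e+e_0})}$, and deduce $z^{tn}c^nf^{p^e}\in(I^n)^{[p^e]}\subseteq(I^n)^{p^e}$ for all $e$. Since $z^{tn}c^n$ is a fixed nonzero element, Rees' valuation criterion (\cite[Corollary~6.8.12]{SwansonHuneke}) gives $f\in\overline{I^n}$, and the Uniform Brian\c{c}on--Skoda bound finishes: $I^{(C_0tC_1p^{e_0}(B+1)n)}\subseteq\overline{I^{(B+1)n}}\subseteq I^n$. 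This global integral-closure argument is what replaces your prime-by-prime localization and is why the singular primes never need to be confronted.
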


\begin{proof}
    For (\ref{criteria for isolated non F reg}), let $z\in R$ be a uniform symbolic multiplier of $R$ with uniform symbolic multiplier constant $C_0$, see Definition~\ref{definition uniform multiplier} and Lemma~\ref{lemma uniform Frobenius multipliers}. Let $C_1$ be as in Lemma~\ref{lemma uniform symbolic Frobenius power multipliers} so that for every ideal $\fa\subseteq R$ and $e\in \NN$, $\fa^{(C_1p^e)}\subseteq \fa^{\left([p^e]\right)}$. The constant $C_1$ only depends on $d$ and a Uniform Brian\c{c}on-Skoda bound of $R$. Let $f\in I^{(C_0tC_1p^{e_0}n)}$. For each $e\in \NN$, write $p^e = a_en +r_e$ with $0\leq r_e< n$. Fix a nonzero element $c\in I^{(C_0tC_1p^{e_0}n)}$. Then $cf^{a_e}\in I^{(C_0tC_1p^{e+e_0})}$ and
    \[
    z^tcf^{a_e}\in \left(I^{(C_1p^{e+e_0})}\right)^{t}\subseteq \left(I^{\left([p^{e+e_0}]\right)}\right)^t\subseteq I^{[p^e]}.
    \]
    Raising the containment to the $n$ and multiplying by $f^{r_e}$ implies that for all $e\in\NN$,
    \[
    z^{tn}c^nf^{p^e}\in \left(I^n\right)^{[p^e]}\subseteq (I^n)^{p^e}.
    \]
    The constants $t$ and $n$ are independent of $e$. Therefore $f\in \overline{I^n}$ by a theorem of Rees, see \cite[Corollary~6.8.12]{SwansonHuneke}. Hence for every $n\in\NN$, $I^{(C_0tC_1p^{e_0}n)}\subseteq \overline{I^n}$. If $B$ is a Uniform Brian\c{c}on-Skoda Bound of $R$, then for all $n\in\NN$,
    \[
    I^{(C_0tC_1p^{e_0}(B+1)n)}\subseteq \overline{I^{(B+1)n}}\subseteq I^n.
    \]

    For (\ref{criteria for F regular}), choose constant $e_1\in\NN$ so that $p^{e_1}\geq t$. If $f\in I_{e+e_0}(I)$, then $f^{p^{e_1}}\in \left(I_{e+e_0}(I)\right)^t\subseteq I^{[p^e]}$. The assumption $R$ is $F$-pure implies $f\in I^{[p^{e-e_1}]}$. Therefore for every $e\in\NN$, $I_{e+e_0+e_1}(I)\subseteq I^{[p^e]}$.
\end{proof}

The next theorem is the technical part of the paper and accomplishes two objectives in the class of $F$-finite domains:
\begin{enumerate}
    \item \label{objective 1 of main tech thm} If $R$ is an $F$-finite domain so that the non-strongly $F$-regular locus consists of isolated points, then $R$ enjoys the Uniform Symbolic Topology Property.
    \item\label{objective 2 of main tech thm} Proof of Main Theorem~\ref{Main Theorem USTP for splitting ideals}: If $R$ is an $F$-finite strongly $F$-regular domain, then there exists $e'\in\NN$ so that for all ideals $I\subseteq R$, $I_{e+e'}(I)\subseteq I^{[p^e]}$.
\end{enumerate}
If $R$ is an $F$-finite $F$-pure domain, $I\subseteq R$ an ideal, and $e\in\NN$, then $I^{([p^e])}\subseteq I_e(I)$, see Lemma~\ref{lemma Some properties of splitting ideals}~(\ref{splitting ideals some comparisons 1}). Therefore the property described by (\ref{objective 2 of main tech thm}) implies the Uniform Symbolic Topology Property, see Lemma~\ref{lemma method of attack to USTP}.

Proposition~\ref{proposition ustp sfr restricted assumptions} is an indication of an inductive technique being used in the proof of Theorem~\ref{theorem USTP in isolated non SFR rings}. The technique is built upon an adaptation of a singular analogue of the non-singular method of test/multiplier ideal methods used in the proof of the Uniform Symbolic Topology Property of non-singular rings found in \cite{ELS, MaSchwedeSymbolic, TakagiYoshida, MurayamaSymbolic}. More specifically, methods built upon subadditivity results of multiplier/test ideals in non-singular rings, that are non-applicable to singular rings, is replaced by finding a sequence of ideals $\JJ\subseteq (\JJ,c_1)\subseteq (\JJ,c_1,c_2)\subseteq \cdots (\JJ,c_1,c_2,\ldots,c_t)$ and descriptions of uniformity relative to each ideal of the chain that are used to have multiplier type effects that eventually allow us to invoke Lemma~\ref{lemma method of attack to USTP}.

The objective of (\ref{objective 2 of main tech thm}) is of strong significance when restricted to class of $\fm$-primary ideals of a local strongly $F$-regular ring $(R,\fm,k)$. Even when restricted further to the single ideal $\fm$. The existence of an $e'\in\NN$ so that for all $e\in\NN$, $I_{e+e'}(\fm)\subseteq \fm^{[p^e]}$ is an important result that implies the non-trivial implication that a local strongly $F$-regular ring is strongly $F$-regular if and only if the $F$-signature of $R$ is positive, \cite[Main Result]{AberbachLeuschke}.

\begin{theorem}
    \label{theorem USTP in isolated non SFR rings}
    Let $R$ be an $F$-finite domain of prime characteristic $p>0$.
    \begin{enumerate}
        \item\label{part of isolated nsfr} If $R_\fp$ is strongly $F$-regular for all non-maximal primes $\fp$ of $\Spec(R)$, then $R$ enjoys the Uniform Symbolic Topology Property.
        \item\label{part sfr and splitting ideals} If $R$ is strongly $F$-regular, not only does $R$ enjoy the Uniform Symbolic Topology Property, there exists a constant $e'\in\NN$ so that for all ideals $I\subseteq R$ and $e\in\NN$, 
        \[
        I_{e+e'}(I)\subseteq I^{[p^e]}.
        \]
    \end{enumerate}
\end{theorem}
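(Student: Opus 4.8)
The plan is to derive both parts from the bounded-power criteria of Lemma~\ref{lemma method of attack to USTP}. For part~(\ref{part sfr and splitting ideals}) it suffices to produce constants $t,e_0\in\NN$ \emph{independent of $I$} with $I_{e+e_0}(I)^{t}\subseteq I^{[p^e]}$ for all ideals $I$ and all $e$; for part~(\ref{part of isolated nsfr}) it suffices to produce such constants with $\bigl(I^{([p^{e+e_0}])}\bigr)^{t}\subseteq I^{[p^e]}$, the weaker form being forced because it must avoid any appeal to $F$-purity over the non-strongly-$F$-regular points. Using the uniform Frobenius multiplier of Lemma~\ref{lemma uniform Frobenius multipliers}, $\mathcal J^{[p^{e+e_0}]}I_e(I)\subseteq I^{[p^e]}$ with $\mathcal J$ the radical ideal of the singular locus, it is enough to bound a fixed power of $I_{e+e_0}(I)$ into $\mathcal J^{[p^e]}+I^{[p^e]}$ and multiply once more by $I_e(I)$ (and likewise with the Frobenius-symbolic multiplier $z^{p^e}I^{([p^e])}\subseteq I^{[p^e]}$ in part~(\ref{part of isolated nsfr})). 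So the crux is the following uniform containment, which I would prove by induction on $d=\dim V(\mathfrak b)$:
\[
(\star_d)\colon\quad\exists\, t,e_0\in\NN \text{ with } I_{e+e_0}(I)^{t}\subseteq \mathfrak b^{[p^e]}+I^{[p^e]}\ \text{ for all ideals }I\subseteq R,\ e\in\NN,
\]
for every ideal $\mathfrak b$ with $\dim V(\mathfrak b)\le d$. Applying $(\star_{\dim\Sing(R)})$ to $\mathfrak b=\mathcal J$ and feeding the output through Lemma~\ref{lemma uniform Frobenius multipliers} and Lemma~\ref{lemma method of attack to USTP}(\ref{criteria for F regular}) gives part~(\ref{part sfr and splitting ideals}); part~(\ref{part of isolated nsfr}) is the same scheme run with Frobenius symbolic powers $I^{([p^\bullet])}$ in place of splitting ideals near the non-SFR points, concluding via Lemma~\ref{lemma uniform symbolic Frobenius power multipliers} and Lemma~\ref{lemma method of attack to USTP}(\ref{criteria for isolated non F reg}).

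In the base case $d=0$ the set $V(\mathfrak b)$ is a finite set of maximal ideals; after replacing $\mathfrak b$ by its radical $\mathfrak m_1\cap\cdots\cap\mathfrak m_k$ (at the cost of raising $t$ to a fixed power, as $(\sqrt{\mathfrak b})^{m}\subseteq\mathfrak b$ for some $m$), Lemma~\ref{Uniform Chevalley Lemma for Frobenius powers}(\ref{Uniform Chevalley F-reg}) applied with $J=\mathfrak m_j$—legitimate because $R_{\mathfrak m_j}$ is strongly $F$-regular in part~(\ref{part sfr and splitting ideals}), and one uses part~(\ref{Uniform Chevalley non-F-reg}) of that lemma with Frobenius symbolic powers otherwise—gives $I_{e+e_1}(I)\subseteq\mathfrak m_j^{[p^e]}$ for every ideal $I$ whose symbolic topology refines the $\mathfrak m_j$-adic one, while the remaining ideals make $\mathfrak m_j^{[p^e]}+I^{[p^e]}$ the unit ideal after localizing at $\mathfrak m_j$, so the containment is automatic there. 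Taking the product over $j$ and using $\bigl(\prod_j\mathfrak m_j\bigr)^{[p^e]}=\prod_j\mathfrak m_j^{[p^e]}\subseteq\mathfrak b^{[p^e]}$ closes this case.

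For the inductive step, after passing to the radical write $\mathfrak b=\mathfrak q_1\cap\cdots\cap\mathfrak q_r$; by the same product manoeuvre it suffices to bound a power of $I_{e+e_0}(I)$ into $\mathfrak q^{[p^e]}+I^{[p^e]}$ for a single minimal prime $\mathfrak q$. If $\mathfrak q$ is maximal this is the base case, so assume $\dim R/\mathfrak q\ge1$; then $\mathfrak q$ is non-maximal, $R_{\mathfrak q}$ is strongly $F$-regular (by hypothesis in part~(\ref{part of isolated nsfr}), always in part~(\ref{part sfr and splitting ideals})), and the adic and symbolic topologies of $\mathfrak q$ are equivalent because $R$ is excellent and $R\to\overline R$ satisfies going down (Proposition~\ref{proposition equivalent topologies}). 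Hence Lemma~\ref{lemma standard linear comparision of splitting ideals and Frobenius Powers}(\ref{standard lin comparison F reg}) yields $e_1$ with $I_{e_1}(\mathfrak q)\subseteq\mathfrak q^{[p^{e_0}]}$, and Lemma~\ref{lemma multiplier for singular locus prime} then yields $c\in R\setminus\mathfrak q$ with $c^{p^e}I_e(I_{e_1}(I))\subseteq(I,\mathfrak q)^{[p^e]}$ for all $I,e$. Since $V(\mathfrak q,c)\subsetneq V(\mathfrak q)$ has dimension $<\dim R/\mathfrak q\le d$, the inductive hypothesis applies to $(\mathfrak q,c)$ and gives $I_{e+e_0'}(I)^{t'}\subseteq(\mathfrak q,c)^{[p^e]}+I^{[p^e]}=\mathfrak q^{[p^e]}+(c^{p^e})+I^{[p^e]}$. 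Shifting $e$ by the multiplier constant $e_0$ of Lemma~\ref{lemma uniform Frobenius multipliers}, multiplying by $I_{e+e_0}(I_{e_1}(I))$, and using $\mathfrak q^{[p^{e+e_0}]}\subseteq(I,\mathfrak q)^{[p^e]}$, $c^{p^{e+e_0}}I_{e+e_0}(I_{e_1}(I))\subseteq(I,\mathfrak q)^{[p^e]}$, and $I^{[p^{e+e_0}]}\subseteq I^{[p^e]}$ places everything inside $(I,\mathfrak q)^{[p^e]}=\mathfrak q^{[p^e]}+I^{[p^e]}$; absorbing the factor $I_{e+e_0}(I_{e_1}(I))$ into one more copy of $I_N(I)$ via Lemma~\ref{lemma Some properties of splitting ideals}(\ref{splitting ideals some comparisons 3}) and~(\ref{splitting ideals some comparisons 2}) produces $I_N(I)^{T}\subseteq\mathfrak q^{[p^e]}+I^{[p^e]}$ for uniform $T$ and $N=e+\mathrm{const}$, which is $(\star)$ for $\mathfrak q$. (Note that at any non-SFR maximal ideal $\mathfrak m\supseteq\mathfrak q$ the element $c$ is either a unit—and the $c$-multiplier does the job directly—or lies in $\mathfrak m$, in which case $\mathfrak m\in V(\mathfrak q,c)$ and the inductive hypothesis for $(\mathfrak q,c)$ already addresses it.) The concluding steps then mimic Proposition~\ref{proposition ustp sfr restricted assumptions}: cube the splitting ideal and strip the Frobenius power with $F$-purity for part~(\ref{part sfr and splitting ideals}), or run the Brian\c{c}on--Skoda/Rees argument of Lemma~\ref{lemma method of attack to USTP} for part~(\ref{part of isolated nsfr}).

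The main obstacle is the uniformity bookkeeping: $t$, $e_0$, the elements $c$, the index $N$, and the numerous additive shifts in $e$ (from the multiplier constants and from converting an ideal to its radical) must all be chosen independently of $I$. This is underwritten by the product reduction over the finitely many minimal primes of each $\mathfrak b$ together with the fact that the recursion terminates after at most $\dim R$ steps, each introducing only finitely many new primes $\mathfrak q$ and elements $c$, so only finitely many ideals $\mathfrak b$ ever occur and the constants produced by the cited lemmas can be maximized over them. A secondary difficulty, more pronounced in part~(\ref{part of isolated nsfr}), is arranging the descent so that $F$-purity and the SFR-based inputs (Lemma~\ref{lemma standard linear comparision of splitting ideals and Frobenius Powers}(\ref{standard lin comparison F reg}), Lemma~\ref{lemma multiplier for singular locus prime}, Lemma~\ref{Uniform Chevalley Lemma for Frobenius powers}(\ref{Uniform Chevalley F-reg})) are invoked only over the strongly $F$-regular locus: the recursion always passes through non-maximal primes $\mathfrak q$, where $R_{\mathfrak q}$ is strongly $F$-regular, and the finitely many non-SFR maximal ideals are met only at the base of the induction, where Frobenius symbolic powers and the weaker Chevalley estimate suffice.
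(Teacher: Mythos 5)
Your plan for part~(\ref{part sfr and splitting ideals}) is essentially the paper's argument: an induction encoded by your $(\star_d)$ on $\dim V(\mathfrak b)$, driven by Theorem~\ref{Theorem Splitting ideals and powers of ideals}, the $c$-multiplier from Lemma~\ref{lemma multiplier for singular locus prime}, the Chevalley bound of Lemma~\ref{Uniform Chevalley Lemma for Frobenius powers}, the uniform Frobenius multiplier, and the criterion in Lemma~\ref{lemma method of attack to USTP}(\ref{criteria for F regular}). The paper records the same recursion through the lists $\Gamma_{i,\mathfrak q}$ indexed by height and then descends, but the combinatorial content is the one you describe, and with $R$ strongly $F$-regular (hence $F$-pure everywhere) the splitting-ideal bookkeeping via Lemma~\ref{lemma Some properties of splitting ideals}(\ref{splitting ideals some comparisons 2}),(\ref{splitting ideals some comparisons 3}) is available globally, so the proof of part~(\ref{part sfr and splitting ideals}) goes through as you outline.

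For part~(\ref{part of isolated nsfr}) there is a genuine gap that is not resolved by "running the same scheme with Frobenius symbolic powers near the non-SFR points." The global splitting-ideal statement $(\star_d)$ is actually \emph{false} once $R$ has a non-$F$-pure maximal ideal $\mathfrak m$: for an $F$-finite local ring that is not $F$-pure every $\varphi\colon F^e_*R_\mathfrak m\to R_\mathfrak m$ has image in $\mathfrak m R_\mathfrak m$, so $I_e(\mathfrak m)R_\mathfrak m=R_\mathfrak m$ for all $e$, and therefore no power of $I_{e+e_0}(\mathfrak m)$ can land inside an ideal contained in $\mathfrak m$. More structurally, the crucial absorption step — replacing the factor $I_{e+e_0}(I_{e_1}(I))$ by one more copy of $I_N(I)$ and collapsing shifts via Lemma~\ref{lemma Some properties of splitting ideals}(\ref{splitting ideals some comparisons 3}) — requires $R_W$ ($W$ the complement of $\Ass(I)$) to be $F$-pure, which fails precisely when $I$ has a non-$F$-pure maximal associated prime, and Frobenius symbolic powers have no analogue of (\ref{splitting ideals some comparisons 2})/(\ref{splitting ideals some comparisons 3}) that would let you substitute them in the induction. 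The fix the paper uses is different: it formulates the key Claim \emph{after localizing at a maximal ideal $\mathfrak m$ with either $\mathfrak m\notin\Ass(I)$ or $R_\mathfrak m$ strongly $F$-regular} (which restores $F$-purity for the localized $R_W$), inserts the ordinary symbolic power $I^{(D_\mathfrak q)}$ inside the double splitting ideal via the non-SFR Chevalley bound (Lemma~\ref{Uniform Chevalley Lemma for Frobenius powers}(\ref{Uniform Chevalley non-F-reg})), and only at the end bridges to $I^{([p^\bullet])}$ through Lemma~\ref{lemma Some properties of splitting ideals}(\ref{splitting ideals containment of bracket powers})/(\ref{associated primes of splitting ideals}); the case $\mathfrak m\in\Ass(I)$ is handled separately because there $I^{(n)}R_\mathfrak m=I^nR_\mathfrak m$. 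Without this localization your induction has no way to produce the containment $\bigl(I^{([p^{e+e_0}])}\bigr)^t\subseteq I^{[p^e]}$ needed to invoke Lemma~\ref{lemma method of attack to USTP}(\ref{criteria for isolated non F reg}).
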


\begin{proof}
    We begin with a reduction to simplify our arguments. An $F$-finite ring has finite Krull dimension. Therefore there are finitely many values $d_1,\ldots,d_t\in\NN$ so that if $\fm\in\Max(R)$ then $\dim(R_{\fm})\in \{d_1,d_2,\ldots,d_t\}$. Let $W_{d_i}\subseteq \Spec(R)$ be the scheme of prime ideals $\fp\in \Spec(R)$ so that there exists some $\fm\in\Max(R)$ so that $\dim(R_{\fm})=d_i$ and $\fp\subseteq \fm$. Each of the schemes $W_{d_i}$ is covered by finitely many affine schemes that enjoy the hypotheses of the theorem. Ideal containments can be checked locally. We therefore can work on the finitely many affine pieces of the finitely many schemes $W_{d_i}$ and assume there exists $d\in\NN$ so that for all $\fm\in\Max(R)$, $\dim(R_{\fm})=d$.
    
    The content of the theorem is already know if $R$ is non-singular: If $R$ is non-singular then $R$ enjoys the Uniform Symbolic Topology Property by \cite[Theorem~1.1]{HHComparison}. When $R$ is non-singular, then for all ideals $I\subseteq R$ and $e\in\NN$, $I_e(I)=I^{[p^e]}$, see Lemma~\ref{lemma Some properties of splitting ideals} (\ref{splitting ideals regular}). 
    
    Assume that $R$ is singular and let $\JJ$ be the reduced ideal defining the singular locus of $\Spec(R)$. By Theorem~\ref{Theorem Splitting ideals and powers of ideals} there exists constant $e_0\in\NN$ so that for all ideals $\fa\subseteq R$, for all $e\in\NN$, $I_e(\fa^{[p^{e_0}]})\subseteq \fa^{[p^e]}$.

    \begin{claim*}\label{claim that adopts chevalley plus multiplier proof}
        Let $\fq\in\Ass(\JJ)$. There exists constants $D_{\fq},t_{\fq},e_{1,\fq},e_{2,\fq}\in\NN$ so that for all ideals $I\subseteq R$, $e\in \NN$, and maximal ideals $\fm$ satisfying either $\fm\not\in\Ass(I)$ or $R_\fm$ is strongly $F$-regular,
            \[
            I_{e+e_{2,\fq}}(I_{e_{1,\fq}}(I^{(D_\fq)}))^{t_\fq}R_\fm\subseteq (I,\fq)^{[p^e]}R_\fm.
            \]
        If $R$ is strongly $F$-regular, then $t_{\fq},e_{1,\fq}$, and $e_{2,\fq}$ can be chosen so that for all $e\in\NN$,
        \[
        I_{e+e_{2,\fq}}(I_{e_{1,\fq}}(I))^{t_\fq}\subseteq (I,\fq)^{[p^e]}.
        \]  
    \end{claim*}

    \begin{remark}
        If $I\subseteq R$ is an ideal and $\fm$ is a maximal ideal, then the associated primes of $IR_\fm$ are the associated primes of $I$ that are contained in $\fm$. Therefore if $R_\fm$ is either strongly $F$-regular or $\fm\not\in\Ass(I)$, then Lemma~\ref{lemma Some properties of splitting ideals} (\ref{associated primes of splitting ideals}), (\ref{splitting ideals some comparisons 1}), and (\ref{splitting ideals some comparisons 3}) are applicable to the localized ideal $IR_\fm$ as $R_\fp$ is assumed to be strongly $F$-regular, hence $F$-pure, for non-maximal primes $\fp$.
    \end{remark}

    \begin{proof}[Proof of Claim]
    If $\fq$ is a maximal ideal, then for each ideal $I\subseteq R$ either $I\subseteq \fq$ or $(I,\fq)=R$. Therefore the claim is already a consequence of Lemma~\ref{Uniform Chevalley Lemma for Frobenius powers} if $\fq$ is maximal. Assume that $\fq$ is non-maximal.
    
    Let $h=\height(\fq)$. We inductively construct finite-length lists of prime ideals $\Gamma_{i, \fq}$ of height $h\leq i\leq d$ (allowing for the possibility of repeated entries in $\Gamma_{i, \fq}$), with additional properties as described below. A defining feature of the lists $\Gamma_{i,\fq}$ is that for each $h+1\leq i\leq d$, the list $\Gamma_{i,\fq}$ is partitioned by the primes belonging to the list $\Gamma_{i-1,\fq}$.
        \begin{enumerate}[(i)]
            \item\label{height h ideal} $\Gamma_{h,\fq} = \{\fq\}$.
            \item\label{splitting ideal containmen with correct Frobenius power} If $i\geq h$ and the list $\Gamma_{i,\fq}$ has been constructed, then for each $\fp\in \Gamma_{i,\fq}$ we let $e_{\fp}\in\NN$ be chosen so that $I_{e_{\fp}}(\fp)\subseteq \fp^{[p^{e_0}]}$. Such constants exist by Lemma~\ref{lemma standard linear comparision of splitting ideals and Frobenius Powers} (\ref{standard lin comparison F reg}).
            \item\label{multipliers for the primes in the list} If $h\leq i<d$ and the list $\Gamma_{i,\fq}$ has been constructed, then for each prime $\fp\in\Gamma_{i,\fq}$, let $c_\fp\in R\setminus\fp$ be chosen as in Lemma~\ref{lemma multiplier for singular locus prime} so for all ideals $I\subseteq R$ and $e\in \NN$,
            \[
            c_\fp^{p^e}I_e(I_{e_{\fp}}(I))\subseteq (I,\fp)^{[p^e]}.
            \]
            If necessary, $c_\fp$ can be multiplied by a non-unit of $R$ so that $c_\fp$ is a non-unit and the primes in $\Ass\left(\sqrt{(\fp,c_\fp)}\right)$ are of height $1$ larger than the height of $\fp$ by Krull's Principal Ideal Theorem. 
            \item\label{inductive construction of the lists} If $h+1\leq i\leq d$ then $\Gamma_{i,\fq} = \bigcup_{\fp\in \Gamma_{i-1,\fq}\setminus \Max(R)}\Ass\left(\sqrt{(\fp,c_\fp)}\right)$. A prime $\fp'$ is repeated in the list $\Gamma_{i,\fq}$ every time $\fp'$ appears as an associated prime of $\sqrt{(\fp,c_\fp)}$ and $\fp\in \Gamma_{i-1,\fq}$.
            \item\label{large enough product of primes recovers multipliers} For each $h\leq i\leq d$, $\fp\in \Gamma_{i,\fq}$, $e'_{\fp}\in\NN$ is so that $\prod_{\fp'\in\Ass\left(\sqrt{(\fp,c_\fp)}\right)}(\fp')^{[p^{e'_{\fp}}]}\subseteq (\fp,c_\fp)$. In particular, for all $e\in\NN$,
            \[
            \prod_{\fp'\in\Ass\left(\sqrt{(\fp,c_\fp)}\right)}(\fp')^{[p^{e+e'_{\fp}}]}\subseteq (\fp,c_\fp)^{[p^e]}.
            \]

            \item\label{the constant e_1}  Let $e_{1,\fq} = \max\{e_\fp\mid \fp\in\Gamma_{i,\fq}\mbox{ and }i\leq h\leq d-1\}$.
            \begin{remark}
                Suppose that $I\subseteq R$ is an ideal and $\fm$ is a maximal ideal. If either $\fm\not\in\Ass(I)$ or if $R_\fm$ is strongly $F$-regular, then by Lemma~\ref{lemma Some properties of splitting ideals}  (\ref{splitting ideal containment}) and (\ref{splitting ideals some comparisons 3}), for all $e\in \NN$,
                \[
                I_{e}(I_{e_{1,\fq}}(I))R_\fm\subseteq I_{e}(I_{e_\fp}(I))R_\fm.
                \]
            \end{remark}
            
            \item\label{the constant e_2} Let $e_2 = \max\{e_{\fp}'\mid \fp\in \Gamma_{i,\fq}\mbox{ and }h\leq i\leq d\}$.

            \item\label{the constant e3} By Lemma~\ref{Uniform Chevalley Lemma for Frobenius powers} (\ref{Uniform Chevalley non-F-reg}) there exists constants $D_\fq,e_3\in\NN$, so that for each of the finitely many maximal ideals $\fm$ minimal over the finitely many ideals of the form $(\fp,c_\fp)$ for some $\fp\in\Gamma_{d-1,\fq}$, for every ideal $I\subseteq \fm$ so that either $\fm\not\in\Ass(I)$ or $R_\fm$ is strongly $F$-regular, for every $e\in \NN$, 
            \begin{align*}
            I_{e+e_3}(I^{(D_\fq)})R_\fm\subseteq (\fp,c_{\fp})^{[p^e]}R_\fm.
            \end{align*}
            By Lemma~\ref{Uniform Chevalley Lemma for Frobenius powers} (\ref{Uniform Chevalley F-reg}), if $R$ is strongly $F$-regular the constant $D_\fq$ can be chosen to be $1$.
        \end{enumerate}

        We now proceed with the proof of the claim. We are assuming $\fq$ is not a maximal ideal. Let $I\subseteq R$ be an ideal. The content of the claim is trivial with respect to a maximal ideal $\fm$ if $(I,\fq)\not\subseteq \fm$. In what follows, an inductive argument is used to show that for each maximal ideal $\fm\in\Max(R)$ containing $(I,\fq)$ so that either $\fm\not\in\Ass(I)$ or if $R_\fm$ is strongly $F$-regular, there exists constants $s_\fm,e_\fm\in\NN$, independent of the maximal ideal $\fm$ and the ideal $I$, so that for all $e\in\NN$,
        \begin{equation}\label{goal of inductive argument}
        I_{e+e_\fm}(I^{(D_\fq)})^{s_\fm}R_\fm\subseteq (I,\fq)^{[p^e]}R_\fm.
        \end{equation}
        To this end we fix a maximal ideal $\fm\supseteq (I,\fq)$ so that either $\fm\not\in\Ass(I)$ or $R_\fm$ is strongly $F$-regular. For each $\fp\in \Gamma_{d-1,\fq}$, the ideal $(\fp,c_\fp)R_\fm$ is either $\fm$-primary or the unit ideal of $R_\fm$. Either scenario, property~(\ref{the constant e3}) implies that for every $\fp\in \Gamma_{d-1,\fq}$ and $e\in\NN$, 
        \begin{align}\label{contain by property constant e3}
        I_{e+e_3}(I_{e_{1,\fq}}(I^{(D_\fq)}))R_\fm\subseteq (\fp,c_\fp)^{[p^e]}R_\fm.
        \end{align}
        Let $|\Gamma_{d-1,\fq}|$ denote the length of the list $\Gamma_{d-1,\fq}$. Then for every $e\in \NN$,
        \begin{align*}
            I_{e+e_3}(I_{e_{1,\fq}}(I^{(D_\fq)}))^{|\Gamma_{d-1,\fq}|}R_\fm &=\left(\prod_{\fp \in \Gamma_{d-1,\fq}}I_{e+e_3}(I_{e_{1,\fq}}(I^{(D_\fq)}))\right)R_\fm\\
            &\subseteq \left(\prod_{\fp \in \Gamma_{d-1,\fq}} (\fp,c_\fp)^{[p^e]}\right)R_\fm \mbox{ {\footnotesize by (\ref{contain by property constant e3}).}}
        \end{align*}
        Multiplying the containment by $I_{e+e_3}(I_{e_{1,\fq}}(I^{(D_\fq)}))^{|\Gamma_{d-1,\fq}|}$,
        \begin{align*}
            I_{e+e_3}(I_{e_{1,\fq}}(I^{(D_\fq)}))^{2|\Gamma_{d-1,\fq}|}R_\fm &\subseteq \left(\prod_{\fp \in \Gamma_{d-1,\fq}} (\fp,c_\fp)^{[p^e]}\right) I_{e+e_3}(I_{e_{1,\fq}}(I^{(D_\fq)}))^{|\Gamma_{d-1,\fq}|}R_\fm\\
            &= \left(\prod_{\fp \in \Gamma_{d-1,\fq}} (\fp,c_\fp)^{[p^e]}I_{e+e_3}(I_{e_{1,\fq}}(I^{(D_\fq)}))\right)R_\fm\\
            &\subseteq \left(\prod_{\fp \in \Gamma_{d-1,\fq}} (\fp,c_\fp)^{[p^e]}I_{e}(I_{e_{1,\fq}}(I^{(D_\fq)}))\right)R_\fm\mbox{ {\footnotesize by Lemma~\ref{lemma Some properties of splitting ideals} (\ref{splitting ideals some comparisons 3})}}\\
            &\subseteq \left(\prod_{\fp \in \Gamma_{d-1,\fq}} (\fp,c_\fp)^{[p^e]}I_{e}(I_{e_\fp}(I^{(D_\fq)}))\right)R_\fm\mbox{ {\footnotesize by property (\ref{the constant e_1})}}\\
            &\subseteq \left(\prod_{\fp \in \Gamma_{d-1,\fq}} (\fp,c_\fp)^{[p^e]}I_{e}(I_{e_\fp}(I))\right)R_\fm\mbox{ {\footnotesize by Lemma~\ref{lemma Some properties of splitting ideals} (\ref{splitting ideal containment})}}\\
            &\subseteq \left(\prod_{\fp \in \Gamma_{d-1,\fq}} (I,\fp)^{[p^e]}\right)R_\fm \mbox{ {\footnotesize by property (\ref{multipliers for the primes in the list})}}\\
            &\subseteq \left(I^{[p^e]}, \prod_{\fp\in\Gamma_{d-1,\fq}}\fp^{[p^e]}\right)R_\fm.
        \end{align*}
        If $h=d-1$ then $\Gamma_{d-1,\fq}=\Gamma_{h,\fq}=\{\fq\}$. Therefore if $h=d-1$ then for all $e\in\NN$,
        \[
        I_{e+e_3}(I_{e_{1,\fq}}(I^{(D_\fq)}))^{|\Gamma_{d-1,\fq}|}R_\fm\subseteq \left(I^{[p^e]}, \fq^{[p^e]}\right)R_\fm.
        \]
        The constants $e_{1,\fq},e_3,|\Gamma_{d-1,\fq}|$ did not depend on the choice of maximal ideal $\fm$. Therefore if $h=d-1$, then we have accomplished the objective of the claim. Suppose that $h\leq d-2$. By construction, the primes appearing in the list $\Gamma_{d-1,\fq}$ are partitioned by the sets $\Ass(\sqrt{(\fp',c_{\fp'})})$ as $\fp'$ varies through non-maximal prime ideals of the list $\Gamma_{d-2,\fq}$. Therefore 
        \begin{equation}\label{equation multiplication of primes}
        \prod_{\fp\in\Gamma_{d-1,\fq}}\fp^{[p^e]}\subseteq \prod_{\fp'\in\Gamma_{d-2,\fq}}\left(\sqrt{(\fp',c_{\fp'})}\right)^{[p^{e}]}.
        \end{equation}
        Hence,        
        \begin{align*}
        I_{e+e_3}(I_{e_{1,\fq}}(I^{(D_\fq)}))^{2|\Gamma_{d-1,\fq}|}R_\fm &\subseteq \left(I^{[p^e]}, \prod_{\fp\in\Gamma_{d-1,\fq}}\fp^{[p^e]}\right)R_\fm \\
        &\subseteq \left(I^{[p^e]}, \prod_{\fp'\in\Gamma_{d-2,\fq}}\left(\sqrt{(\fp',c_{\fp'})}\right)^{[p^{e}]}\right)R_\fm \mbox{ {\footnotesize by (\ref{equation multiplication of primes})}} \\
        &\subseteq \left(I^{[p^e]}, \prod_{\fp'\in\Gamma_{d-2,\fq}}(\fp',c_{\fp'})^{[p^{e-e'_{\fp'}}]}\right)R_\fm \mbox{ {\footnotesize by property (\ref{large enough product of primes recovers multipliers})}}\\
        &\subseteq \left(I^{[p^e]}, \prod_{\fp'\in\Gamma_{d-2,\fq}}(\fp',c_{\fp'})^{[p^{e-e_2}]}\right)R_\fm \mbox{ {\footnotesize by property (\ref{the constant e_2})}}.
        \end{align*}

         Let $e_{d-2,\fq} = e_2+e_3$ and $s_{d-2}=2|\Gamma_{d-1,\fq}|$, a pair of constants only dependent on the height of the maximal ideal $\fm$. By the previous containments, for all $e\in \NN$,
        \[
        I_{e+e_{d-2,\fq}}(I_{e_{1,\fq}}(I^{(D_\fq)}))^{s_{d-2}}R_\fm\subseteq \left(I^{[p^e]}, \prod_{\fp\in\Gamma_{d-2, \fq}}(\fp,c_{\fp})^{[p^e]}\right)R_\fm.
        \]
    
        Inductively, we assume that for $h\leq i \leq d-2$ we have found constants $e_{i,\fq,\fm},s_{i,\fm}\in \NN$ independent of the maximal ideal $\fm$ and the ideal $I\subseteq R$, so that for all $e\in \NN$,
        \[
        I_{e+e_{i,\fq,\fm}}(I_{e_{1,\fq}}(I^{(D_\fq)}))^{s_{i,\fm}}R_\fm\subseteq \left(I^{[p^e]}, \prod_{\fp\in\Gamma_{i, \fq}}(\fp,c_{\fp})^{[p^e]}\right)R_\fm.
        \]
        Let $|\Gamma_{i, \fq}|$ be the length of the list $\Gamma_{i,\fq}$. Multiplying by $I_{e+e_{i,\fq,\fm}}(I_{e_{1,\fq}}(I^{(D_\fq)}))^{|\Gamma_{i, \fq}|}$ gives a containment of ideals
        \begin{align*}
            I_{e+e_{i,\fq,\fm}}(I_{e_{1,\fq}}(I^{(D_\fq)}))^{s_{i,\fm}+|\Gamma_{i, \fq}|}R_\fm &\subseteq \left(I^{[p^e]}, \prod_{\fp\in\Gamma_{i, \fq}}(\fp,c_{\fp})^{[p^e]}\right)I_{e+e_{i,\fq,\fm}}(I_{e_{1,\fq}}(I^{(D_\fq)}))^{|\Gamma_{i, \fq}|}R_\fm\\
            &\subseteq \left(I^{[p^e]}, \prod_{\fp\in\Gamma_{i, \fq}}(\fp,c_{\fp})^{[p^e]}\right)I_{e}(I_{e_{1,\fq}}(I^{(D_\fq)}))^{|\Gamma_{i, \fq}|}R_\fm \, \mbox{{\footnotesize by Lemma~\ref{lemma Some properties of splitting ideals} (\ref{splitting ideals some comparisons 3})}}\\
            &\subseteq \left(I^{[p^e]}, \prod_{\fp\in\Gamma_{i, \fq}}(\fp,c_{\fp})^{[p^e]}I_{e}(I_{e_{1,\fq}}(I^{(D_\fq)}))\right)R_\fm \\
            &\subseteq \left(I^{[p^e]}, \prod_{\fp\in\Gamma_{i, \fq}}(\fp,c_{\fp})^{[p^e]}I_{e}(I_{e_{1,\fq}}(I))\right)R_\fm \mbox{ {\footnotesize by Lemma~\ref{lemma Some properties of splitting ideals} (\ref{splitting ideal containment})}} \\
            &\subseteq \left(I^{[p^e]}, \prod_{\fp\in\Gamma_{i, \fq}}(\fp,c_{\fp})^{[p^e]}I_{e}(I_{e_\fp}(I))\right)R_\fm \mbox{ {\footnotesize by property (\ref{the constant e_1})}} \\
            &\subseteq \left(I^{[p^e]}, \prod_{\fp\in\Gamma_{i, \fq}}\fp^{[p^e]}\right)R_\fm  \mbox{ {\footnotesize by property (\ref{multipliers for the primes in the list}).}}
        \end{align*}
        If $i\geq h+1$ then the primes appearing in the list $\Gamma_{i,\fq}$ are partitioned by sets $\Ass(\sqrt{(\fp',c_{\fp'})})$ as $\fp'$ varies through non-maximal primes of the list $\Gamma_{i-1,\fq}$. Properties (\ref{large enough product of primes recovers multipliers}) and (\ref{the constant e_2}) then provide a containment of ideals
        \[
        I_{e+e_{i,\fq,\fm}}(I_{e_{1,\fq}}(I))^{s_{i,\fm}+|\Gamma_{i, \fq}|}R_\fm\subseteq \left(I^{[p^e]}, \prod_{\fp\in\Gamma_{i-1, \fq}}(\fp',c_{\fp'})^{[p^{e-e_2}]}\right)R_\fm.
        \]
        Therefore if $i\geq h+1$, then the constants $e_{i-1,\fq,\fm}=e_2+e_{i,\fq,\fm}$ and $s_{i-1,\fm}=s_{i,\fm}+|\Gamma_{i,\fq}|$ are only dependent on the height of the maximal ideal $\fm$ and have the property that for every $e\in \NN$,
        \[
        I_{e+e_{i-1,\fq,\fm}}(I_{e_{1,\fq}}(I))^{s_{i-1,\fm}}R_\fm\subseteq \left(I^{[p^e]}, \prod_{\fp\in\Gamma_{i-1, \fq}}(\fp,c_{\fp})^{[p^e]}\right)R_\fm.
        \]

        Recall that $\Gamma_{h,\fq}=\{\fq\}$. By the induction argument above, there are constants $e_{h,\fq,\fm},s_{h,\fm}$ independent of the maximal ideal $\fm$ and the ideal $I$ so that for all $e\in \NN$,
        \[
         I_{e+e_{h,\fq,\fm}}(I_{e_{1,\fq}}(I^{(D_\fq)}))^{s_{h,\fm}}R_\fm\subseteq \left(I^{[p^e]}, (\fq,c_{\fq})^{[p^e]}\right)R_\fm.
        \]
        For each ideal $I\subseteq R$ let
        \[
        \footnotesize
        e_{h,\fq,I} = \max\{e_{h,\fq,\fm}\mid \fm\in \Max(R)\mbox{ and }(I,\fq)\subseteq \fm, \mbox{ either } \fm\not\in\Ass(I) \mbox{ or } R_\fm \mbox{ is strongly } F\mbox{-regular}\}
        \]
        and 
        \[
        \footnotesize
        s_{h,I}=\max\{s_{h,\fm}\mid \fm\in \Max(R)\mbox{ and }(I,\fq)\subseteq \fm, \mbox{ either } \fm\not\in\Ass(I) \mbox{ or } R_\fm \mbox{ is strongly } F\mbox{-regular}\}.
        \]
        The constants $e_{h,\fq,I}$ and $s_{h,I}$ are independent of the ideal $I$. Let
        \[
        e_{2,\fq} = \max\{e_{h,\fq,I}\mid I\subseteq R\}\cup\{e_{1,\fq}\}
        \]
        and 
        \[
        t_{\fq}=\max\{s_{h, I}\mid I\subseteq R\}.
        \]
        Then for every ideal $I\subseteq R$, $e\in\NN$, and maximal ideal $\fm$ containing $(I,\fq)$ so that either $\fm\not\in\Ass(I)$ or $R_\fm$ is strongly $F$-regular,
        \begin{align*}
            I_{e+e_{2,\fq}}(I_{e_{1,\fq}}(I^{(D_\fq)}))^{t_\fq}R_\fm &\subseteq I_{e+e_{h,\fq,I}}(I_{e_{1,\fq}}(I^{(D_\fq)}))^{t_{\fq}}R_\fm\mbox{ {\footnotesize by Lemma~\ref{lemma Some properties of splitting ideals} (\ref{splitting ideals some comparisons 3})}}\\
            &\subseteq \left(I^{[p^e]}, (\fq,c_{\fq})^{[p^e]}\right)R_\fm.
        \end{align*}
        Multiplying the above containment by $I_{e+e_{2,\fq}}(I_{e_{1,\fq}}(I^{(D_\fq)}))$, 
        \begin{align*}
            I_{e+e_{2,\fq}}(I_{e_{1,\fq}}(I^{(D_\fq)}))^{t_{\fq}+1}R_\fm&\subseteq \left(I^{[p^e]}, (\fq,c_{\fq})^{[p^e]}\right)I_{e+e_{2,\fq}}(I_{e_{1,\fq}}(I^{(D_\fq)}))R_\fm\\
            &\subseteq \left(I^{[p^e]}, (\fq,c_{\fp})^{[p^e]}\right)I_{e}(I_{e_{1,\fq}}(I))R_\fm \mbox{ {\footnotesize by Lemma~\ref{lemma Some properties of splitting ideals}  (\ref{splitting ideal containment}) and (\ref{splitting ideals some comparisons 3})}}\\
            &\subseteq \left(I^{[p^e]}, \fq^{[p^e]}\right)R_\fm \mbox{ {\footnotesize property (\ref{multipliers for the primes in the list}).}}
        \end{align*}
        The constants $e_{2,\fq}$ and $s_h+1$ are independent of the choice of ideal $I\subseteq R$ and choice of a maximal ideal $\fm$ containing $(I,\fq)$ so that either $\fm\not\in\Ass(I)$ or $R_\fm$ is strongly $F$-regular. 
        
        If $R$ is strongly $F$-regular, then $D_\fq$ could have be chosen to be $1$ by property (\ref{the constant e3}). Moreover, there was no restriction on the choice of maximal ideal in the above argument. Therefore if $R$ is strongly $F$-regular, then for all ideals $I\subseteq R$ and $e\in\NN$,
        \[
        I_{e+e_{2,\fq}}(I_{e_{1,\fq}}(I))^{t_\fq+1}\subseteq \left(I^{[p^e]}, \fq^{[p^e]}\right).
        \]
        This completes the proof of the claim.
    \end{proof} 

    We continue the proof of the theorem and adopt the notation in the statement of Claim~\ref{claim that adopts chevalley plus multiplier proof}. Let $D=\max\{D_\fq\mid \fq\in\Ass(\JJ)\}$, $e_1=\max\{e_{1,\fq}\mid \fq\in\Ass(\JJ)\}$, and $e_{2}=\max\{e_{2,\fq}\mid \fq\in\Ass(\JJ)\}$. By Lemma~\ref{lemma uniform Frobenius multipliers}, there exists a constant $e_3$ so that for all ideals $I\subseteq R$ and $e\in \NN$
    \begin{align}\label{equation J multiplier a}
    \JJ^{[p^{e+e_3}]}I_e(I)\subseteq I^{[p^e]}.
    \end{align}

    We now prove that if $\Spec(R)$ has at worst isolated non-strongly $F$-regular points, then $R$ enjoys the Uniform Symbolic Topology Property. It suffices to find a constant $C$ so that for all ideals $I\subseteq R$ and maximal ideals $\fm$, $I^{(Cn)}R_\fm\subseteq I^nR_\fm$. If $\fm\in \Ass(I)$ then $I^{(n)}R_\fm = I^nR_\fm$. Assume that $\fm\not\in\Ass(I)$. Let $t = \sum_{\fq\in \Ass(\JJ)}t_{\fq}$ and $e_4\in\NN$ a constant so that $p^{e_4}\geq D$. By Lemma~\ref{lemma Some properties of splitting ideals} (\ref{splitting ideals containment of bracket powers}) and (\ref{associated primes of splitting ideals}),
    \begin{equation}
        \label{equation from 2 statements of splitting ideal lemma}
        I^{\left([p^{e+e_{1}+e_2+e_3+e_4}]\right)}\subseteq I_{e+e_1+e_2+e_3}(I^{\left([p^{e_4}]\right)}).
    \end{equation}
    Then for all $e\in\NN$,
    \begin{align*}
        \left(I^{\left([p^{e+e_{1}+e_2+e_3+e_4}]\right)}\right)^{t+1}R_\fm &\subseteq \left(I_{e+e_1+e_2+e_3}(I^{\left([p^{e_4}]\right)})\right)^{t+1}R_\fm \mbox{ {\footnotesize by (\ref{equation from 2 statements of splitting ideal lemma})}}\\
        &\subseteq \left(I_{e+e_1+e_2+e_3}(I^{(D)})\right)^{t+1}R_\fm \mbox{ {\footnotesize by Lemma~\ref{lemma Some properties of splitting ideals} (\ref{splitting ideal containment})}}\\
        &=\left(\prod_{\fq\in\Ass(\JJ)}I_{e+e_1+e_2+e_3}(I^{(D)})^{t_\fq}\right)I_{e+e_1+e_2+e_3}(I^{(D)})R_\fm\\
        &\subseteq\left(\prod_{\fq\in\Ass(\JJ)}I_{e+e_1+e_2+e_3}(I^{(D_\fq)})^{t_\fq}\right)I_{e+e_1+e_2+e_3}(I)R_\fm \mbox{ {\footnotesize by Lemma~\ref{lemma Some properties of splitting ideals} (\ref{splitting ideal containment})}}\\
        &\subseteq \left(\prod_{\fq\in\Ass(\JJ)}I_{e+e_2+e_3}(I_{e_1}(I^{(D_\fq)}))^{t_\fq}\right)I_{e+e_1+e_2+e_3}(I)R_\fm \mbox{ {\footnotesize by Lemma~\ref{lemma Some properties of splitting ideals} (\ref{splitting ideals some comparisons 2})}}\\
        &\subseteq \left(\prod_{\fq\in\Ass(\JJ)}I_{e+e_2+e_3}(I_{e_1}(I^{(D_\fq)}))^{t_\fq}\right)I_{e}(I)R_\fm \mbox{ {\footnotesize by Lemma~\ref{lemma Some properties of splitting ideals} (\ref{splitting ideals some comparisons 3})}}\\
        &\subseteq \left(\prod_{\fq\in\Ass(\JJ)}I_{e+e_2+e_3}(I_{e_{1,\fq}}(I^{(D_\fq)}))^{t_\fq}\right)I_{e}(I)R_\fm \mbox{ {\footnotesize by Lemma~\ref{lemma Some properties of splitting ideals} (\ref{splitting ideals some comparisons 3})}}\\
        &\subseteq \left(\prod_{\fq\in\Ass(\JJ)}I_{e+e_{2,\fq}+e_3}(I_{e_{1,\fq}}(I^{(D_\fq)}))^{t_\fq}\right)I_{e}(I)R_\fm \mbox{ {\footnotesize by Lemma~\ref{lemma Some properties of splitting ideals} (\ref{splitting ideals some comparisons 3})}}\\
        &\subseteq \left(\prod_{\fq\in\Ass(\JJ)}(I,\fq)^{[p^{e+e_3}]}\right)I_e(I)R_\fm \mbox{ \footnotesize by Claim~\ref{claim that adopts chevalley plus multiplier proof}}\\
        &\subseteq \left(I^{[p^{e+e_3}]},\prod_{\fq\in\Ass(\JJ)}\fq^{[p^{e+e_3}]}\right)I_e(I) R_\fm\\
        &\subseteq (I,\JJ)^{[p^{e+e_3}]}I_e(I)R_\fm\\
        &\subseteq I^{[p^e]}R_\fm \mbox{ {\footnotesize by (\ref{equation J multiplier a}).}}
    \end{align*}
    The constants $e_1+e_2+e_3+e_4$ and $t+1$ were independent of the ideal $I$. By Lemma~\ref{lemma method of attack to USTP} (\ref{criteria for isolated non F reg}) there exists a constant $C$, depending only on the dimension of $R_\fm$, a uniform Brian\c{c}on-Skoda bound of $R_{\fm}$, and a uniform symbolic multiplier constant of a uniform symbolic multiplier of $R_\fm$, so that $I^{(Cn)}R_\fm\subseteq I^nR_\fm$ for all $n\in\NN$. Rings that are $F$-finite have finite Krull dimension. Integral closure of ideals commutes with localization, therefore a uniform Brian\c{c}on-Skoda bound of $R$ is a uniform Brian\c{c}on-Skoda Bound of $R_\fm$. The ring $R$ admits a uniform symbolic multiplier $z$ with a uniform symbolic multiplier constant $C_0$ by Lemma~\ref{lemma uniform Frobenius multipliers}. Symbolic powers and ordinary powers of ideals commute with localization. Hence $z$ is a uniform symbolic multiplier of $R_\fm$ with uniform symbolic multiplier constant $C_0$. Therefore there exists a constant $C$, not depending on the ideal $I$ nor the choice of maximal ideal $\fm$, so that for all $n\in\NN$, $I^{(Cn)}R_\fm\subseteq I^nR_\fm$. Therefore $R$ enjoys the Uniform Symbolic Topology Property.

    Now assume $R$ is strongly $F$-regular and continue to adopt the notation of Claim~\ref{claim that adopts chevalley plus multiplier proof}. Let $e_{1}=\max\{e_{1,\fq}\mid \fq\in\Ass(\JJ)\}$, $e_{2}=\max\{e_{2,\fq}\mid \fq\in\Ass(\JJ)\}$, and $t = \sum_{\fq\in \Ass(\JJ)}t_{\fq}$. Then for all ideals $I\subseteq R$ and $e\in \NN$,
    \begin{align*}
        \left(I_{e+e_1+e_2+e_3}(I)\right)^{t+1} &=\left(\prod_{\fq\in\Ass(\JJ)}I_{e+e_1+e_2+e_3}(I)^{t_\fq}\right)I_{e+e_1+e_2+e_3}(I)\\
        &\subseteq \left(\prod_{\fq\in\Ass(\JJ)}I_{e+e_2+e_3}(I_{e_1}(I))^{t_\fq}\right)I_{e+e_1+e_2+e_3}(I) \mbox{ {\footnotesize by Lemma~\ref{lemma Some properties of splitting ideals} (\ref{splitting ideals some comparisons 2})}}\\
        &\subseteq \left(\prod_{\fq\in\Ass(\JJ)}I_{e+e_2+e_3}(I_{e_1}(I))^{t_\fq}\right)I_{e}(I) \mbox{ {\footnotesize by Lemma~\ref{lemma Some properties of splitting ideals} (\ref{splitting ideals some comparisons 3})}}\\
        &\subseteq \left(\prod_{\fq\in\Ass(\JJ)}I_{e+e_2+e_3}(I_{e_{1,\fq}}(I))^{t_\fq}\right)I_{e}(I) \mbox{ {\footnotesize by Lemma~\ref{lemma Some properties of splitting ideals} (\ref{splitting ideals some comparisons 3})}}\\
        &\subseteq \left(\prod_{\fq\in\Ass(\JJ)}I_{e+e_{2,\fq}+e_3}(I_{e_{1,\fq}}(I))^{t_\fq}\right)I_{e}(I) \mbox{ {\footnotesize by Lemma~\ref{lemma Some properties of splitting ideals} (\ref{splitting ideals some comparisons 3})}}\\
        &\subseteq \left(\prod_{\fq\in\Ass(\JJ)}(I,\fq)^{[p^{e+e_3}]}\right)I_e(I) \mbox{ \footnotesize by Claim~\ref{claim that adopts chevalley plus multiplier proof}}\\
        &\subseteq (I,\JJ)^{[p^{e+e_3}]}I_e(I)\\
        &\subseteq I^{[p^e]} \mbox{ {\footnotesize by (\ref{equation J multiplier a}).}}
    \end{align*}
    The constants $e_1+e_2+e_3$ and $t+1$ were independent of the ideal $I$. By Lemma~\ref{lemma method of attack to USTP} (\ref{criteria for F regular}) there exists a constant $e'\in\NN$ so that for all ideals $I\subseteq R$ and $e\in\NN$, $I_{e+e'}(I)\subseteq I^{[p^e]}$.
\end{proof}

\begin{corollary}
    \label{corollary USTP non-F-finite rings}
    Let $R$ be a domain essentially of finite type over an excellent local ring $(A,\fm,k)$ of prime characteristic $p>0$. If $R_\fp$ is strongly $F$-regular for all non-maximal primes $\fp$ of $\Spec(R)$, then $R$ enjoys the Uniform Symbolic Topology Property.
\end{corollary}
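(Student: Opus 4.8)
The plan is to reduce, through faithfully flat and finite base changes, to the $F$-finite case already settled in Theorem~\ref{theorem USTP in isolated non SFR rings}. Since symbolic and ordinary powers of ideals commute with localization and $R$ has finite Krull dimension, it suffices to produce a single constant $C$ with $(IR_\fm)^{(Cn)}\subseteq(IR_\fm)^{n}$ for every maximal ideal $\fm$, every ideal $I$, and every $n$, so I may assume $R$ is local, provided the constant ultimately obtained is independent of $\fm$. I would then pass successively to three rings. First, the normalization $\overline R$ of $R$: it is module-finite over $R$ by excellence, is a semilocal (hence, after one more localization, local) normal domain, and still satisfies the hypothesis that $\overline R_{\fq}$ is strongly $F$-regular for every non-maximal $\fq$, since $\overline R\otimes_R R_\fp=R_\fp$ for non-maximal $\fp$ (as $R_\fp$ is already normal), so the conductor of $R\subseteq\overline R$ is $\fm$-primary. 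Second, the completion $\widehat{\overline R}$: it is a complete local normal domain, faithfully flat over $\overline R$ with geometrically regular formal fibers, and its closed fiber is a field, so every non-maximal prime of $\widehat{\overline R}$ contracts to a non-maximal prime of $\overline R$; hence, by smooth base change for strong $F$-regularity along geometrically regular maps (cf.\ \cite{HHTAMS}), $\widehat{\overline R}$ is again strongly $F$-regular on its punctured spectrum. Third, the Hochster and Huneke $\Gamma$-construction $S:=(\widehat{\overline R})^{\Gamma}$: for a suitably cofinite parameter $\Gamma$ it is a complete local $F$-finite normal domain, faithfully flat over $\widehat{\overline R}$, with closed fiber a field, so that (using that the $\Gamma$-construction preserves normality and strong $F$-regularity for such $\Gamma$, again \cite{HHTAMS}) $S$ is an $F$-finite domain that is strongly $F$-regular away from its maximal ideal. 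Theorem~\ref{theorem USTP in isolated non SFR rings}(\ref{part of isolated nsfr}) then gives that $S$ enjoys the Uniform Symbolic Topology Property.

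It remains to transport the property back down. For any flat ring map $R\to T$ and any ideal $I$ one has $I^{(m)}T\subseteq(IT)^{(m)}$ for all $m$: writing $W$ for the complement of $\bigcup\Ass(I)$ one has $I^{(m)}=\ker\bigl(R\to R_W/I^{m}R_W\bigr)$, and applying the exact functor $-\otimes_R T$ turns this into $I^{m}T_{W'}\cap T$ with $W'$ the image of $W$; since each associated prime of $IT$ contracts into $\Ass(I)$ (flat base change of associated primes, together with the fact that an associated prime of a flat algebra over a domain lies over $(0)$), the multiplicative set $W'$ avoids $\bigcup\Ass_T(IT)$, whence $I^{m}T_{W'}\cap T\subseteq(IT)^{(m)}$. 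When $R\to T$ is moreover faithfully flat this shows at once that the Uniform Symbolic Topology Property descends along $R\to T$ with the same constant, which handles the steps $\overline R\to\widehat{\overline R}$ and $\widehat{\overline R}\to S$. The finite extension $R\subseteq\overline R$ enjoys the going-down property — and this is exactly where the hypothesis that $R_\fp$ is normal for non-maximal $\fp$ is used, cf.\ the discussion preceding Proposition~\ref{proposition equivalent topologies} — so running the argument of Proposition~\ref{proposition equivalent topologies} (equivalence of topologies for each prime under going-down) while carrying along the uniform primary decomposition of \cite[Main Result]{SwansonPrimaryDecompositions} gives $I^{(m)}\subseteq(I\overline R)^{(m)}\cap R$; combining this with the containment $(I\overline R)^{m}\cap R\subseteq\overline{I^{m}}$ for the integral extension $R\subseteq\overline R$ and with the Uniform Brian\c{c}on-Skoda property of $R$ (available since $R$ is reduced and essentially of finite type over an excellent local ring, \cite[Theorem~4.13]{HunekeUniformBounds}), one obtains: if $\overline R$ has the property with constant $C$ and $B$ is a uniform Brian\c{c}on-Skoda bound of $R$, then $I^{(C(B+1)n)}\subseteq(I\overline R)^{(C(B+1)n)}\cap R\subseteq\overline{I^{(B+1)n}}\subseteq I^{n}$, so $R$ enjoys the property with constant $C(B+1)$.

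The step I expect to be the main obstacle is the last base change, together with the bookkeeping that the non-strongly-$F$-regular locus stays ``small'' throughout. After completion the non-strongly-$F$-regular locus of $\widehat{\overline R}$ is closed and misses the closed point, hence is zero-dimensional; the delicate point is that $\widehat{\overline R}\to(\widehat{\overline R})^{\Gamma}$ is \emph{not} geometrically regular, so the preservation of the strongly $F$-regular locus under this map cannot be read off from generic smooth base change and must be invoked as one of the explicit preservation statements of the $\Gamma$-construction, for a $\Gamma$ cofinite enough to also preserve being a normal domain — both of which are part of Hochster and Huneke's analysis. A secondary, purely quantitative concern is that the constant furnished by Theorem~\ref{theorem USTP in isolated non SFR rings} for $S$, once transported back to $R$, must be bounded independently of the maximal ideal $\fm$ chosen at the outset; this is harmless, because the invariants it depends on — the Krull dimension, a uniform Brian\c{c}on-Skoda bound, and a uniform symbolic multiplier constant — are globally uniform for $R$.
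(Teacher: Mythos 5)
Your strategy follows the same broad outline as the paper---base change to an $F$-finite ring and invoke Theorem~\ref{theorem USTP in isolated non SFR rings}---but you introduce an intermediate normalization step $R\to\overline R$ that the paper avoids, and that step contains the real gap. You reduce to $R_\fm$, pass to $\overline{R_\fm}$, complete, then apply the $\Gamma$-construction. For transporting the property down across $R_\fm\subseteq\overline{R_\fm}$ you rely on the inclusion $I^{(m)}\subseteq(I\overline R)^{(m)}\cap R$, attributing it to ``running the argument of Proposition~\ref{proposition equivalent topologies} while carrying along the uniform primary decomposition.'' That inclusion is not established by either citation, and it is not a consequence of going down. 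Going down (together with the normality of $R_\fp$ at non-maximal $\fp$) tells you that \emph{minimal} primes of $I\overline R$ contract to minimal primes of $I$, and at those primes the needed local containment holds because $R_\fp=\overline R_\fp$. But $\Ass_{\overline R}(I\overline R)$ may include embedded primes---in particular, primes lying over $\fm$ or over non-associated primes of $I$---and an element $x\in I^{(m)}$ is not controlled by $I^m$ at such points. Until you show that the embedded associated primes of $I\overline R$ contract into $\Ass_R(I)$ (they need not), or replace the claimed inclusion with a suitably weakened uniform version, the descent across the non-flat step is broken. Proposition~\ref{proposition equivalent topologies} is a topology-equivalence statement internal to $R$ and does not compare symbolic powers across $R\subseteq\overline R$; Swanson's uniform primary decompositions control the shape of primary components of $I^n$ in a single ring but say nothing about primary components of $I^n\overline R$ versus those of $I^n$.

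The paper's proof sidesteps this entirely by never introducing $\overline R$: it first replaces $R$ by $R\otimes_A\widehat A$ (faithfully flat with geometrically regular fibers since $A$ is excellent), and then applies the $\Gamma$-construction to this ring, citing \cite[Theorem~3.4\,(ii)]{MurayamaGamma}, which in one stroke produces a purely inseparable, faithfully flat $R\to R^\Gamma$ with $R^\Gamma$ $F$-finite \emph{and} the strongly $F$-regular locus preserved. Because both base changes are faithfully flat, the USTP descends with the same constant via $I^{(n)}S\cap R=I^{(n)}$ and $I^nS\cap R=I^n$, the argument you correctly outline for flat maps. Note also that the issue you identify as the ``main obstacle''---that $\widehat{\overline R}\to(\widehat{\overline R})^\Gamma$ is not geometrically regular and so preservation of the strongly $F$-regular locus requires an explicit $\Gamma$-construction statement---is in fact the part that is handled cleanly by Murayama's theorem; the actual weak point in your outline is the non-flat normalization step, which the paper's proof is specifically organized to avoid. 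If you want to keep your architecture, you would need either to prove a genuine linear comparison between $I^{(n)}$ and $(I\overline R)^{(n)}\cap R$ under going down (which would be a nontrivial result in its own right), or follow the paper and complete $A$ rather than $R$ so that all base changes are faithfully flat.
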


\begin{proof}
    It is known that if $R \to S$ is a faithfully flat map and $I \subseteq R$ is an ideal, then $I^{(n)}S \cap R = I^{(n)}$ and $I^nS \cap R = I^n$. See \cite[Argument of Lemma 2.4, Step 3]{MurayamaSymbolic} for the necessary details. In particular, if a faithfully flat extension of $R$ enjoys the Uniform Symbolic Topology Property, then so does $R$. 
    
    We can replace $R$ with $R \otimes_A \widehat{A}$ and assume that $R$ is essentially of finite type over a complete local ring $(A, \mathfrak{m}, k)$. By \cite[Theorem~3.4 (ii)]{MurayamaGamma}, there exists a purely inseparable and faithfully flat ring map $R \to R^\Gamma$ such that $R^\Gamma$ is $F$-finite with the additional property that for all $\mathfrak{p} \in \Spec(R)$, $R_\mathfrak{p}$ is strongly $F$-regular if and only if $R^\Gamma_{\sqrt{\mathfrak{p} R^\Gamma}}$ is strongly $F$-regular. Consequently, $R$ satisfies the Uniform Symbolic Topology Property by a repeated application of \cite[Argument of Lemma 2.4, Step 3]{MurayamaSymbolic} and Theorem~\ref{theorem USTP in isolated non SFR rings} (\ref{part of isolated nsfr}).
\end{proof}

\section{The Uniform Symbolic Topology Property and Finite Extensions}\label{Section Symbolic Powers, Intersections, finite extensions}

Throughout this section $R$ is assumed to be an excellent Noetherian ring of arbitrary characteristic and of finite Krull dimension. Progress on the Uniform Symbolic Topology Problem found in \cite{HKVfinite, HKAbelian, HKHypersurface} is on the behavior of symbolic powers of prime ideals. A Noetherian ring $R$ is said to enjoy the \emph{Uniform Symbolic Topology Property for prime ideals} if there exists a constant $C$ so that for all prime ideals $\fp\in\Spec(R)$ and $n\in\NN$, $\fp^{(C\height(\fp)n)}\subseteq \fp^n$. Huneke, Katz, and Validashti proved under suitable hypotheses that if $R\to S$ is a finite extension of domains, if $S$ enjoys the Uniform Symbolic Topology Property for prime ideals, then $R$ enjoys the Uniform Symbolic Topology Property for prime ideals, \cite[Corollary]{HKVfinite}. We extend their theorem from the study of prime ideals to all ideals. We first notice that study of the Uniform Symbolic Topology Property is readily reduced to the study of symbolic powers of integrally closed ideals.

\begin{lemma}
    \label{lem: USTP reduces to integrally closed ideals}
    Let $R$ be an excellent Noetherian domain that enjoys the Uniform Brian\c{c}on-Skoda Property. If there exists a constant $C$ so that for all integrally closed ideals $J\subseteq R$ and $n\in \NN$, $J^{(Cn)}\subseteq J^n$, then $R$ enjoys the Uniform Symbolic Topology Property.
\end{lemma}

\begin{proof}
    Let $B$ be a Uniform Brian\c{c}on-Skoda Bound of $R$, $I\subseteq R$ an ideal, $W$ the complement of the union of the associated primes of $I$, and $J = \overline{I}R_W\cap R$. There exists an element $c\in R$ so that $J = (\overline{I}:_Rc)$, hence $J$ is integrally closed, see discussion of \cite[Remark~1.3.2 (2)]{SwansonHuneke}, and for all $n\in\NN$, $J^nR_W\cap R\subseteq J^{(n)}$. Integral closure commutes with localization. So for each $n\in\NN$, there is a containment of ideals 
    \[
    I^{(n+B)}\subseteq \overline{J^{n+B}}R_W\cap R\subseteq J^nR_W\cap R\subseteq J^{(n)}.
    \]
    Therefore for each $n\in \NN$,
    \[
    I^{(C(B+1)^2)n}\subseteq \overline{J^{C(B+1)^2n}}R_W\cap R\subseteq J^{C(B+1)n+B}R_W\cap R\subseteq J^{(C(B+1)n)}\subseteq J^{(B+1)n}\subseteq J^{n+B}\subseteq I^n.
    \]
    The constant $C(B+1)^2$ is independent of the ideal $I\subseteq R$ and $n\in\NN$.
\end{proof}

We utilize the theory of Rees valuations to study the powers of an ideal. For an introduction to theory of Rees valuations see \cite[Chapter~10]{SwansonHuneke}. The following summarizes the necessary information.

\begin{remark}[{\cite[Chapter~10]{SwansonHuneke}}]
    \label{remark Rees valuations}
    Let $R$ be an excellent Noetherian domain, $K$ the fraction field of $R$, and $I\subseteq R$ an ideal. Under the given hypotheses, the \emph{Rees valuations} of $I$ are the discrete valuations $\nu:K^{\times}\to\ZZ$ corresponding to the exceptional components of the normalized blowup of the ideal $I$. Let $\overline{B}_I\xrightarrow{\pi_I} \Spec(R)$ denote the normalized blowup of $I$ and $\Exc(\pi_I)$ be the finite set of irreducible components of the exceptional locus of $\pi_I$. Each component $E\in \Exc(\pi_I)$ has codimension $1$ and the local ring $\O_E$ is a discrete valuation ring. For each $E\in\Exc(\pi_I)$ let $\fm_E$ be the maximal ideal of $\O_E$ and $\nu_E$ the corresponding discrete valuation. If $J\subseteq R$ then $\nu_E(J) := \min\{\nu_E(x)\mid x\in J\}$ and is the exponent so that $J\OO_E = \fm_E^{\nu_E(J)}\OO_E$. For every ideal $J\subseteq R$ and $n\in\NN$, $J\subseteq \overline{I^n}$ if and only if $\nu_E(J)\geq n\nu_E(I)$ for all $E\in\Exc(\pi_I)$.
\end{remark}

Finite extensions of Noetherian domains $R\to S$ are \emph{integral}, every element $s\in S$ is the root of a monic polynomial with coefficients in $R$. When $R\to S$ is integral and $I\subseteq R$ is an ideal, then the integral closure of $I$ necessarily contains $IS\cap R$. We leave this observation as a labeled remark for reference in arguments that follow.

\begin{remark}[{\cite[Proposition~1.6.1]{SwansonHuneke}}]
    \label{remark extension contraction finite maps integrally closed ideals}
    Let $R\to S$ be an integral extension of Noetherian domains. If $I\subseteq R$ is an ideal of $R$, then $I\subseteq IS\cap R\subseteq \overline{IS}\cap R =\overline{I}$. In particular, if $I\subseteq R$ is integrally closed, $W_I$ the complement of the union of the associated primes of $I$, then 
    \[
    (IS_{W_I}\cap S)\cap R = IS\cap R=I.
    \]
\end{remark}

\subsection{On the Descent of the Uniform Symbolic Topology Property} The study of symbolic powers of prime ideals and finite extension $R\to S$ found in \cite{HKVfinite} builds upon an observation: If $R\to S$ is finite extension of domains, $R$ is normal, then there exists a constant $C$ so that for all prime ideals $\fp\in\Spec(R)$, $\sqrt{\fp S}^C\subseteq \overline{\fp S}$, \cite[Observation~3.1]{HKVfinite}. The following lemma is an adjustment to their observation.

\begin{lemma}
    \label{lem: linear adjutment to powers of saturations}
    Let $R\to S$ be a finite extension of excellent domains so that $R$ is normal. There exists a constant $C$ so that for all integrally closed ideals $I\subseteq R$, if $W_I$ is the complement of the union of the associated primes of $I$, $(IS_{W_I}\cap S)^{C}\subseteq \overline{IS}$.
\end{lemma}

\begin{proof}
    Let $K$ and $L$ be the fraction fields of $R$ and $S$ respectively and $[L:K]$ the degree of the finite field extension $K\to L$. We first assume that $K\to L$ is Galois. The finite extension $R_{W_I}\to S_{W_I}$ remains generically Galois. If $\sigma\in \Gal(L/K)$ then $\sigma(IS_{W_{I}}) = IS_{W_I}$ since $I\subseteq R\subseteq K$ and $S_{W_I}$ is integrally closed in $L$, therefore $\sigma(IS_{W_I}\cap S)= IS_{W_I}\cap S$. If $s\in S$ then the minimal polynomial $f_s(x)$ of $s$ over $K$ has degree at most $[L:K]$. The coefficients are integral over $R$ and therefore belong to $R$ since $R$ is integrally closed. The non-leading coefficients of $f_s(x)$ are symmetric functions of the Galois conjugates of $s$. Therefore the non-leading coefficients of $f_s(x)$ belong to 
    \[
    IS_{W_I}\cap R = (IS_{W_I}\cap R_{W_I})\cap R = IR_{W_I}\cap R = I {\footnotesize {\mbox{ by Remark~\ref{remark extension contraction finite maps integrally closed ideals}.}}}
    \]
    In particular, $s^{[L:K]}\in (s^{\deg(f_s(x))})S\subseteq IS$. Hence for all Rees valuations $\nu$ of $IS$, $\nu(s)\geq \frac{1}{[L:K]}\nu(I)$. Consequently, if $\nu$ is a Rees valuation of $IS$, then
    \[
    \nu((IS_{W_I}\cap S)^{[L:K]}) = [L:K]\nu((IS_{W_I}\cap S))\geq \nu(IS).
    \]
    Equivalently, $(IS_{W_I}\cap S)^{[L:K]}\subseteq \overline{IS}$.

    Now assume $R\to S$ is generically separable. Let $L_{\Gal}$ be the Galois closure of $K\to L$ and $T$ the integral closure of $R$ in $L_{\Gal}$. There exists $C$ so that for all integrally closed ideals $I\subseteq R$, $(IT_{W_I}\cap T)^C\subseteq \overline{IT}$. Therefore
    \[
    (IS_{W_I}\cap S)^C\subseteq (IT_{W_I}\cap T)^C\cap S\subseteq \overline{IT}\cap S = \overline{IS} {\footnotesize {\mbox{ by Remark~\ref{remark extension contraction finite maps integrally closed ideals}.}}}
    \]

    If $R\to S$ is a general finite extension of normal domains, then we can filter the field extension $K\to L$ by $K\to L_{\sep}\to L$ where $L_{\sep}$ is the separable closure of $K\to L$. If $K\to L$ are of characteristic $0$, then $L=L_{\sep}$ and there is nothing left to prove by the above. We therefore can assume $K$ and $L$ have prime characteristic $p>0$ and $e_0\in\NN$ is chosen so that $L^{p^{e_0}}\subseteq L_{\sep}$. Let $T$ be the integral closure of $R$ in $L_{\sep}$ and let $C$ be chosen so that for all integrally closed ideals $I\subseteq R$, $(IT_{W_I}\cap T)^C \subseteq \overline{IT}$. If $s\in S$ then $s^{p^{e_0}}\in L_{\sep}$ and is integral over $R$, hence belongs to $T$. Therefore 
    \[
    ((IS_{W_I}\cap S)^{C})^{[p^{e_0}]}= ((IS_{W_I}\cap S)^{[p^{e_0}]})^{C}\subseteq ((IT_{W_I}\cap T))^{C} \subseteq \overline{IT}\subseteq \overline{IS}.
    \]
    Therefore if $\nu$ is a Rees valuation of $IS$ then $\nu((IS_{W_I}\cap S)^{C})\geq \frac{1}{p^{e_0}}\nu(IS)$. In conclusion, if $\nu$ is a Rees valuation of $IS$ then
    \[
    \nu((IS_{W_I}\cap S)^{Cp^{e_0}}) = p^{e_0}\nu((IS_{W_I}\cap S)^{C})\geq \nu(IS).
    \]
    Therefore $(IS{W_I}\cap S)^{Cp^{e_0}}\subseteq \overline{IS}$.
\end{proof}

Main Theorem~\ref{Main Theorem USTP finite} is a corollary of the below Theorem~\ref{thm: USTP for normal ideals of finite extension implies USTP} and Lemma~\ref{lem: USTP for normalization and going down implies USTP}.

\begin{theorem}
    \label{thm: USTP for normal ideals of finite extension implies USTP}
    Let $R\to S$ be a finite map of excellent normal domains so that $R$ enjoys the Uniform Brian\c{c}on-Skoda Property. If $S$ enjoys the Uniform Symbolic Topology Property, then $R$ enjoys the Uniform Symbolic Topology Property.
\end{theorem}

\begin{proof}
     Let $K$ and $L$ be the fraction fields of $R$ and $S$ respectively. We first assume that $K\to L$ is separable.

    \begin{claim}
        Assume that $K\to L$ is a separable field extension.
        \begin{enumerate}
            \item\label{all associated primes galois conjugates} If $\fq\in \Ass(IS)$, then for every $\fq'\in \Spec(S)$ so that $\fq\cap R=\fq'\cap R$, $\fq'\in\Ass(IS)$.
            \item\label{all galois conjugates lying over associated primes} If $\fp\in\Ass(I)$, then every prime of $\Spec(S)$ lying over $\fp$ is an associated prime of $IS$.
        \end{enumerate}
    \end{claim}
    \begin{proof}[Proof of Claim]
        Let $L\to L_{\Gal}$ be the Galois closure of $K\to L$ and $T$ the integral closure of $R$ in $L_{\Gal}$. Let $\fq_1,\fq_2\in \Spec(S)$ lie over a common prime $\fp\in\Spec(R)$. Let $\fq_1',\fq_2'\in \Spec(T)$ be prime ideals lying over $\fq_1$ and $\fq_2$ respectively. By \cite[Ch. 5 Theorem~22]{ZariskiSamuelVol1}, there exists $\sigma\in\Gal(L_{\Gal}/K)$ so that $\sigma(\fq_1')=\fq_2'$. Hence $\sigma(\fq_1)= \sigma(\fq_1'\cap S) = \sigma(\fq_1')\cap S= \fq_2'\cap S =\fq_2$. If $\fq_1\in\Ass(IS)$ then there exists element $y\in S$ so that $\fq_1 = (IS:_S y)$. If $s\in S$, then $\sigma(s)\in \fq_2$ if and only if  $s\in \fq_1=(IS:_Sy)$ if and only if $ys\in IS$ if and only if $\sigma(s)\sigma(y)\in \sigma(IS)=IS$ if and only if $\sigma(s)\in (IS:_S\sigma(y))$. Therefore $\fq_2=(IS:_S\sigma(y))$ and $\fq_2\in \Ass(IS)$ as claimed in (\ref{all associated primes galois conjugates}).
    
         Assume that $\fp\in \Ass(I)$.  If $IS = \fq_1\cap \fq_2\cap \cdots \cap \fq_t$ is a primary decomposition of $\overline{IS}$, then by Remark~\ref{remark extension contraction finite maps integrally closed ideals},
         \[
         I=IS\cap R =(\fq_1\cap R)\cap (\fq_2\cap R)\cap \cdots \cap(\fq_t\cap R)
         \]
         is a primary decomposition of $I$. Therefore for each $\fp\in\Ass(I)$ there is some prime $\fq\in \Ass(IS)$ lying over $\fp$ so that $\fq\in\Ass(\overline{IS})$. By (\ref{all associated primes galois conjugates}), every prime lying over an associated $\fp\in \Ass(I)$ is an associated prime of $IS$ as claimed in (\ref{all galois conjugates lying over associated primes}).
    \end{proof}

    By the claim, if $R\to S$ is generically separable, if $I\subseteq R$ is an integrally closed ideal, then $I^nS_{W_I}\cap S = (IS_{W_I}\cap S)^{(n)}$. Let $D$ be a constant so that for all ideals $J\subseteq S$, $J^{(Dn)}\subseteq J^n$, and let $C_0$ be a constant as in Lemma~\ref{lem: linear adjutment to powers of saturations} so that for all integrally closed ideals $I\subseteq R$, $(IS_{W_I}\cap S)^{C_0}\subseteq \overline{IS}$. Then for all integrally closed ideals $I\subseteq R$ and $n\in\NN$,
    \[
    I^{(C_0Dn)}\subseteq (I^{C_0Dn}S_{W_I}\cap S) \cap R = (IS_{W_I}\cap S)^{(C_0Dn)}\cap R\subseteq (IS_{W_I}\cap S)^{C_0n}\cap R\subseteq \overline{IS}^n\cap R = \overline{I^n}.
    \]
    Therefore if $B$ is a Uniform Brian\c{c}on-Skoda Bound of $R$, $I\subseteq R$ an integrally closed ideal, and $n\in\NN$,
    \[
    I^{(C_0D(B+1)n)}\subseteq \overline{I^{(B+1)n}}\subseteq \overline{I^{n+B}}\subseteq I^n.
    \]
    By Lemma~\ref{lem: USTP reduces to integrally closed ideals}, $R$ enjoys the Uniform Symbolic Topology Property.

    Now suppose $K\to L$ is an arbitrary finite field extension and let $L_{\sep}$ be the separable closure of $K\to L$. Let $T$ be the integral closure of $R$ in $L_{\sep}$. For every prime $\fq\in\Spec(T)$, the ideal $\sqrt{\fq S}$ is the unique prime ideal of $S$ lying over $\fp$. Therefore if $J\subseteq T$ is an integrally closed ideal, $J^nS_{W_J}\cap S = (JS_{W_J}\cap S)^{(n)}$.
    
    By the separable argument, if $I\subseteq R$ is an integrally closed ideal, then for all $n\in\NN$, $I^nT_{W_{I}}\cap T = (IT_{W_I}\cap T)^{(n)}$. Therefore if $I\subseteq R$ is an integrally closed ideal of $R$ and $n\in\NN$,
    \[
    I^nS_{W_I}\cap S = (IT_{W_I}\cap T)^nS_{W_I}\cap S = (IS_{W_I}\cap S)^{(n)}.
    \]
    If $C_0$ is a constant as in Lemma~\ref{lem: linear adjutment to powers of saturations} so that for all integrally closed ideals $I\subseteq R$, $(IS_{W_{I}}\cap S)^{C_0}\subseteq \overline{IS}$ and $D$ is a constant so that for all $J\subseteq S$ and $n\in\NN$, $J^{(Dn)}\subseteq J^n$, then for all integrally closed ideals $I\subseteq R$ and $n\in\NN$,
    \[
    I^{(C_0Dn)}\subseteq (I^{C_0Dn}S_{W_I}\cap S) \cap R = (IS_{W_I}\cap S)^{(C_0Dn)}\cap R\subseteq (IS_{W_I}\cap S)^{C_0n}\cap R\subseteq \overline{IS}^n\cap R = \overline{I^n}.
    \]
    As in the separable case, if $B$ is a Uniform Brian\c{c}on-Skoda Bound of $R$, then for all integrally closed ideals $I\subseteq R$ and $n\in\NN$, $I^{(C_0D(B+1)n)}\subseteq I^n$.  By Lemma~\ref{lem: USTP reduces to integrally closed ideals}, $R$ enjoys the Uniform Symbolic Topology Property.
\end{proof}

If $k$ is a field of characteristic $0$, $G$ a subgroup of $S_n$, $G$ acts on $k[x_1,x_2,\ldots,x_n]$ by permuting the variables, $k[x_1,x_2,\ldots,x_n]^G$ the ring of invariants, then the ring extension $k[x_1,x_2,\ldots,x_n]^G\to k[x_1,x_2,\ldots,x_n]$ is a finite extension of normal domains. An application of \cite[Corollary~3.5]{HKVfinite} is that the ring of invariants $k[x_1,x_2,\ldots,x_n]^G$ enjoys the Uniform Symbolic Topology Property for prime ideals. An immediate corollary then of Theorem~\ref{thm: USTP for normal ideals of finite extension implies USTP} is a generalization of \cite[Corollary~3.5]{HKVfinite} to conclude the ring of invariants enjoys the Uniform Symbolic Topology Property for all ideals, not just prime ideals.

\begin{corollary}
    Let $S$ be a regular ring containing a field of characteristic $0$, $G$ a finite group acting on $S$, and $R^G$ the ring of invariants. If $R^G$ enjoys the Uniform Brian\c{c}on-Skoda Property, e.g. $S$ is complete or essentially of finite type over a field, then $R^G$ enjoys the Uniform Symbolic Topology Property.
\end{corollary}

\subsection{Proof of Main Theorem~\ref{Main theorem USTP in SFR rings}}

Theorem~\ref{thm main theorem} below is a mild generalization of the statement found in Main Theorem~\ref{Main theorem USTP in SFR rings}. Theorem~\ref{thm main theorem} only requires the non-normal locus of the base ring $R$ to only consist of isolated points. To this end we begin with a lemma.

\begin{lemma}
\label{lem: USTP for normalization and going down implies USTP}
    Let $R$ be an excellent domain that enjoys the Uniform Brian\c{c}on-Skoda Property and the non-normal locus of $\Spec(R)$ only consists of isolated points. If $\overline{R}$ enjoys the Uniform Symbolic Topology Property then $R$ enjoys the Uniform Symbolic Topology Property.
\end{lemma}

\begin{proof}
    The normalization map $R\to \overline{R}$ is finite because $R$ is excellent. The conductor ideal $\fc:=\Ann_R(\overline{R}/R)$ is an intersection of finitely many ideals primary to a maximal ideal in the non-normal locus of $\Spec(R)$. 

    \begin{claim}
        \label{claim uniform C to raise saturated ideal}
        There exists a constant $C$ so that for all integrally closed ideals $I\subseteq R$, if $W_I$ the complement of the union of the associated primes of $I$, then $(I\overline{R}_{W_I}\cap \overline{R})^C\subseteq I$.
    \end{claim}

    \begin{proof}[Proof of Claim]
        It suffices to find a constant $C$ so that for each maximal ideal $\fm\in\Spec(R)$, $(I(\overline{R}_{\fm})_{W_I}\cap \overline{R}_{\fm})^C\subseteq \overline{IR}_{\fm}$. The constant $C=1$ works if $\fm$ belongs to the normal locus of $\Spec(R)$. If $\fm$ is one of the finitely many maximal ideals of the non-normal locus of $\Spec(R)$, then $\fc R_{\fm}$ is primary to the maximal ideal $\fm R_{\fm}$. So there exists $C_{\fm}$ so that $\fm^{C_{\fm}}\overline{R}_{\fm}\subseteq \fc \overline{R}_{\fm}$. Moreover, 
        \[
        \fc (I(\overline{R}_{\fm})_{W_I}\cap \overline{R}_{\fm})\subseteq (I(\overline{R}_{\fm})_{W_I}\cap \overline{R}_{\fm}) \cap R_{\fm} = IR_{\fm} {\footnotesize{ \mbox{ by Remark~\ref{remark extension contraction finite maps integrally closed ideals}.}}}
        \]
        Therefore 
        \begin{align*}
            (I(\overline{R}_{\fm})_{W_I}\cap \overline{R}_{\fm})^{C_{\fm}+1}&\subseteq \fm^{C_{\fm}}(I(\overline{R}_{\fm})_{W_I}\cap \overline{R}_{\fm})\\
            &\subseteq \fc(I(\overline{R}_{\fm})_{W_I}\cap \overline{R}_{\fm})\\
            &\subseteq IR_{\fm}.
        \end{align*}
        The constant $C = \max\{C_{\fm}+1\mid R_{\fm} \mbox{ is non-normal}\}$ has the claimed property.
    \end{proof}
    
    Let $C$ be as in Claim~\ref{claim uniform C to raise saturated ideal} and $D$ be a constant so that for all ideals $J\subseteq \overline{R}$ and $n\in\NN$, $J^{(Dn)}\subseteq J^n$. Then for all integrally closed ideals $I\subseteq R$ and $n\in\NN$,
    \begin{align*}
        I^{(CDn)} &\subseteq I^{CDn}\overline{R}_{W_I}\cap \overline{R}\\
        & = (I\overline{R}_{W_I}\cap \overline{R})^{(CDn)}\\
        & \subseteq (I\overline{R}_{W_I}\cap \overline{R})^{Cn}\\
        &\subseteq I^n.
    \end{align*}
    By Lemma~\ref{lem: USTP reduces to integrally closed ideals}, $R$ enjoys the Uniform Symbolic Topology Property.
\end{proof}

We now have the necessary ingredients to complete the proof of Main Theorem~\ref{Main theorem USTP in SFR rings}.

\begin{theorem}
    \label{thm main theorem}
    Let $R$ be a domain of prime characteristic $p>0$ that is either $F$-finite or essentially of finite type over an excellent local ring. Assume that the non-normal locus of $\Spec(R)$ consists only of isolated points. If there exists a finite extension $R\to S$ so that $S_\fp$ is strongly $F$-regular for all non-maximal prime ideals $\fp\in\Spec(S)$, then $R$ enjoys the Uniform Symbolic Topology Property.
\end{theorem}

\begin{proof}
    The induced maps of normalizations $\overline{R}\to \overline{S}$ is finite. Normalization commutes with localization, strong $F$-regularity is a local condition, and strongly $F$-regular rings are normal. Therefore $\overline{S}$ still enjoys the property that $\overline{S}_{\fp}$ is strongly $F$-regular for all non-maximal prime ideals $\fp\in\Spec(\overline{S})$. 

    The rings $R$ and $\overline{R}$ enjoy the Uniform Brian\c{c}on-Skoda Property by \cite[Theorem~4.13]{HunekeUniformBounds}. The ring $\overline{S}$ enjoys the Uniform Symbolic Topology Property by Corollary~\ref{corollary USTP non-F-finite rings}. Therefore $\overline{R}$ enjoys the Uniform Symbolic Topology Property by Theorem~\ref{thm: USTP for normal ideals of finite extension implies USTP}. The ring $R$ enjoys the Uniform Symbolic Topology Property by Lemma~\ref{lem: USTP for normalization and going down implies USTP}.
\end{proof}

\begin{corollary}
    \label{corollary USTP for purely inseparable extension}
    Let $A$ be a domain of prime characteristic $p>0$ that is either $F$-finite or essentially of finite type over an excellent local ring. Assume that the non-strongly $F$-regular locus of $\Spec(R)$ only consists of isolated points. Let $A\to R$ be a finite and purely inseparable extension of domains so that the non-normal locus of $\Spec(R)$ only consists of isolated points. Then $R$ enjoys the Uniform Symbolic Topology Property.
\end{corollary}

\begin{proof}
    Because $A\to R$ is finite and purely inseparable, there exists $e\in\NN$ so that $R\subseteq A^{1/p^e}$ is a finite extension. The ring $A^{1/p^e}$ is abstractly isomorphic to $A$. Therefore $R$ enjoys the Uniform Symbolic Topology Property by Theorem~\ref{thm main theorem}.
\end{proof}

\bibliographystyle{skalpha}
\bibliography{main}
\end{document}